\documentclass[11pt,reqno]{amsart}

\usepackage[margin=1.1in]{geometry}
\usepackage[english]{babel}
\usepackage{amsmath,amssymb,amsfonts,amsthm,mathtools}
\usepackage{dsfont}
\usepackage{xcolor}
\usepackage{hyperref}
\usepackage{tikz-cd}

\hypersetup{
  colorlinks = true,
  urlcolor = blue,
  linkcolor = blue,
  citecolor = black
}

\numberwithin{equation}{section}
\theoremstyle{plain}
\newtheorem{thmctr}{}[section]
\newtheorem{lemma}[thmctr]{Lemma}
\newtheorem{theorem}[thmctr]{Theorem}
\newtheorem{proposition}[thmctr]{Proposition}
\newtheorem{corollary}[thmctr]{Corollary}
\theoremstyle{definition}
\newtheorem{definition}[thmctr]{Definition}
\newtheorem{example}[thmctr]{Example}
\theoremstyle{remark}
\newtheorem{remark}[thmctr]{Remark}
\newtheorem{question}[thmctr]{Question}

\newcommand{\kk}{\Bbbk}
\newcommand{\unit}{\mathds{1}}

\newcommand{\B}{\mathcal{B}}
\newcommand{\C}{\mathcal{C}}
\newcommand{\D}{\mathcal{D}}
\newcommand{\E}{\mathcal{E}}
\newcommand{\F}{\mathcal{F}}
\newcommand{\M}{\mathcal{M}}
\newcommand{\T}{\mathcal{T}}
\newcommand{\X}{\mathcal{X}}
\newcommand{\Z}{\mathcal{Z}}

\newcommand{\loc}{\mathrm{loc}}
\newcommand{\Ind}{\mathrm{Ind}}
\newcommand{\Res}{\mathrm{Res}}
\newcommand{\Hom}{\mathrm{Hom}}
\newcommand{\ra}{\mathrm{ra}}
\newcommand{\rev}{\mathrm{rev}}

\newcommand{\im}{\mathrm{Im}}
\newcommand{\FP}{\mathrm{d}}
\newcommand{\Vect}{\mathrm{vect}}
\newcommand{\id}{\mathrm{id}}
\newcommand{\Fun}{\mathrm{Fun}}
\newcommand{\Ker}{\mathrm{Ker}}

\newcommand{\Lalg}[1]{\mathfrak{L}_{\mathrm{alg}}(#1)}
\newcommand{\Lten}[1]{\mathfrak{L}_{\otimes}(#1)}

\newcommand{\oB}{\overline{\B}}
\newcommand{\ozeta}{\overline{\zeta}}

\newcommand{\tF}{\widetilde{F}}

\newcommand{\bA}{\mathbb{A}}

\newcommand{\I}{\mathrm{I}}
\newcommand{\U}{\mathrm{U}}
\newcommand{\R}{\mathrm{R}}
\newcommand{\Q}{\mathrm{Q}}

\title[Subalgebras and tensor subcategories]{A Galois connection between subalgebras and tensor subcategories}
\author{Harshit Yadav}
\address{Max-Planck-Institut f\"ur Mathematik, Vivatsgasse 7, 53111 Bonn, Germany}
\email{yadav@mpim-bonn.mpg.de}

\author{Kenichi Shimizu}
\address{Department of Mathematical Sciences, Shibaura Institute of Technology, 307 Fukasaku, Minuma-ku, Saitama-shi, Saitama 337-8570, Japan}
\email{kshimizu@shibaura-it.ac.jp}

\date{\today}

\usepackage[normalem]{ulem}

\begin{document}

\begin{abstract}
Let $\B$ be a braided finite tensor category and let $A$ be a simple
commutative algebra in $\B$. We construct an order-reversing Galois connection
between subalgebras of $A$ and tensor subcategories of $\B_A$. Let $\B'$ denote
the M\"uger center and set $A':=A\cap\B'$. The closure operators are
$B\mapsto\langle B,A'\rangle_{\mathrm{alg}}$ and
$\E\mapsto\langle\E,\B_A^{\loc}\rangle_{\otimes}$.
Thus the closed subalgebras are those containing $A'$, while the closed
tensor subcategories are those containing $\B_A^{\loc}$; equivalently, the
fixed-point intervals $[A',A]_{\mathrm{alg}}$ and
$[\B_A^{\loc},\B_A]_{\otimes}$ are anti-isomorphic as lattices.

For a finite tensor category $\C$, the canonical algebra in $\Z(\C)$ yields an
anti-isomorphism between its subalgebras and tensor subcategories of $\C$.
When $\B$ is nondegenerate, Frobenius extensions correspond to unimodular
tensor subcategories. For a Hopf algebra in $\B$, the correspondence
specializes to an order-preserving bijection between Hopf ideals and normal
left coideal subalgebras.
\end{abstract}

\maketitle
 
\section{Introduction}

Let $\C$ be a finite tensor category. Its tensor subcategories form the lattice
$\Lten{\C}$, with intersection as meet and the generated tensor subcategory as
join. This lattice is a basic structural invariant of $\C$, but describing it
in full is generally difficult. A more local version of the problem is to fix
a tensor subcategory $\D\subseteq\C$ and determine all tensor subcategories
lying between $\D$ and $\C$. In lattice-theoretic terms, this amounts to
describing the interval $[\D,\C]_{\otimes}$.

Central tensor subcategories provide especially natural lower endpoints for
such intervals. Here centrality means that the inclusion
$\D\hookrightarrow\C$ admits a braided lift to the Drinfeld center $\Z(\C)$.
The resulting half-braidings give $\D$ additional compatibility with every
object of $\C$, and one expects this compatibility to constrain the position of
$\D$ inside $\Lten{\C}$. Our first result makes this constraint precise.
Recall that an element $a$ of a lattice is left-modular if
\[
  x\vee(a\wedge y)=(x\vee a)\wedge y
  \qquad\text{whenever }x\leq y.
\]
We prove that every central tensor subcategory $\D\subseteq\C$ is left-modular
in $\Lten{\C}$. In particular, every tensor subcategory of a braided finite
tensor category is central, so its lattice of tensor subcategories is modular.
In the fusion setting, this modularity result follows from
\cite[Lemma~5.6]{drinfeld2010braided}.

We now apply this viewpoint to module categories. Let $\B$ be a braided finite
tensor category, and let $A$ be a simple commutative algebra in $\B$. We write
$\B_A$ for the category of right $A$-modules and $\B_A^\loc$ for its full
subcategory of local modules. Since $A$ is simple, it is exact
\cite{coulembier2025simple}, and hence $\B_A$ and $\B_A^\loc$ are finite tensor
categories \cite{shimizu2026commutative}. Moreover, $\B_A^\loc$ is braided and
the inclusion $\B_A^\loc\hookrightarrow\B_A$ is central
\cite{pareigis1995braiding}. Therefore $\B_A^\loc$ is left-modular in
$\Lten{\B_A}$.

The interval $[\B_A^\loc,\B_A]_{\otimes}$ measures the possible
tensor-theoretic enlargements of the braided category of local modules inside
the category of all $A$-modules. Subalgebras enter naturally into this problem:
an inclusion $B\subseteq A$ gives restriction and induction functors between
the corresponding module categories. This leads us to ask whether intermediate
tensor subcategories of $\B_A$ can be described in terms of subalgebras of
$A$.

There is a direct semisimple precedent for this problem. Let $\B$ be a
nondegenerate braided fusion category and let $A$ be a connected \'{e}tale
algebra in $\B$. Davydov--M\"uger--Nikshych--Ostrik showed that the lattice of
subalgebras of $A$ is anti-isomorphic to the interval of fusion subcategories
of $\B_A$ containing $\B_A^\loc$
\cite[Theorem~4.10 and Remark~4.12]{davydov2013witt}.

The aim of the present paper is to determine what remains true for a simple
commutative algebra in an arbitrary braided finite tensor category, without
assuming semisimplicity or nondegeneracy. In this generality, the natural
order-reversing assignments need not be mutually inverse on the two full
lattices. A Galois connection is therefore the appropriate replacement: its
composites are closure operators, and its closed elements still form
anti-isomorphic lattices. We construct this Galois connection and compute both
closure operators explicitly.

This setup also contains the problem of describing all tensor subcategories of
an arbitrary finite tensor category. Indeed, let $\C$ be a finite tensor
category, let $\I_{\C}:\C\to\Z(\C)$ be the right adjoint of the forgetful
functor, and let $\bA=\I_{\C}(\unit)$ be the canonical algebra in $\Z(\C)$.
Then
\[
  \Z(\C)_{\bA}\simeq\C,
  \qquad
  \Z(\C)_{\bA}^{\loc}\simeq\Vect.
\]
Consequently, the interval
$[\Z(\C)_{\bA}^{\loc},\Z(\C)_{\bA}]_{\otimes}$ is identified with the full
lattice $\Lten{\C}$.

\subsection*{The Galois correspondence and its consequences}
We first formulate the correspondence on the two full lattices. We then
compute the closure operators and identify the intervals on which the two
maps become mutually inverse.

Write $\B'$ for the M\"uger center of $\B$ and set $A':=A\cap\B'$. 
With this notation, consider the two full lattices and their distinguished
intervals:
\begin{equation*}
\begin{aligned}
\Lalg{A}
  &= \{\,\text{algebras }B\mid B\xhookrightarrow{i_B}A\,\},\\
[A',A]_{\mathrm{alg}}
  &= \{\,B\in\Lalg{A}\mid A'\hookrightarrow B\,\},\\[2pt]
\Lten{\B_A}
  &= \{\,\text{tensor subcategories }\E\mid
       \E\xhookrightarrow{\iota_\E}\B_A\,\},\\
[\B_A^\loc,\B_A]_{\otimes}
  &= \{\,\E\in\Lten{\B_A}\mid
       \B_A^\loc\subseteq\E\,\}.
\end{aligned}
\end{equation*}
These lattices and intervals are ordered by inclusion. For a subalgebra
$B\subseteq A$,
induction of local $B$-modules defines a tensor subcategory $\beta(B)$ of
$\B_A$. In the other direction, we use the right adjoint of a central functor $\B \to \Z(\E;\B_A)$ (where the target is the relative Drinfeld center) to define a subalgebra $\alpha(\E)\subseteq A$ for each tensor
subcategory $\E\subseteq\B_A$. We show that the resulting order-reversing maps
$\alpha:\Lten{\B_A}\to\Lalg{A}$ and
$\beta:\Lalg{A}\to\Lten{\B_A}$ satisfy the Galois relation:
\[
  \E\subseteq\beta(B)
  \quad\Longleftrightarrow\quad
  B\subseteq\alpha(\E).
\]
Thus $\alpha(\E)$ is the largest subalgebra $B\subseteq A$ for which
$\E\subseteq\beta(B)$. Dually, $\beta(B)$ is the largest tensor subcategory
$\E\subseteq\B_A$ for which $B\subseteq\alpha(\E)$. 
The corresponding closure formulas are
\[
  \alpha\beta(B)=\langle B,A'\rangle_{\mathrm{alg}},
  \qquad
  \beta\alpha(\E)=\langle\E,\B_A^{\loc}\rangle_\otimes.
\]
Thus a subalgebra is
closed precisely when it contains $A'$, and a tensor subcategory is closed
precisely when it contains $\B_A^{\loc}$. It follows that $\alpha$ and $\beta$
restrict to mutually inverse lattice anti-isomorphisms
\begin{equation}\label{eq:intro-lattice-anti-isomorphism}
  [\B_A^{\loc},\B_A]_{\otimes}
  \underset{\beta}{\overset{\alpha}{\rightleftarrows}}
  [A',A]_{\mathrm{alg}}.
\end{equation}
These formulas identify $A'$ and $\B_A^\loc$ as precisely the two
obstructions to obtaining a correspondence on the full lattices.

The nondegenerate and symmetric cases exhibit the two extreme behaviors of
the closure operators. If $\B$ is nondegenerate, then
$A'\cong\unit$. Hence every subalgebra of $A$ is closed, and
\eqref{eq:intro-lattice-anti-isomorphism} becomes an anti-isomorphism between
all subalgebras of $A$ and all tensor subcategories lying between $\B_A^{\loc}$ and $\B_A$. If $\B$ is
symmetric, then $A'=A$ and $\B_A^{\loc}=\B_A$. Both Galois maps are constant,
so they do not distinguish proper subalgebras or proper tensor subcategories.
Thus, by \eqref{eq:intro-lattice-anti-isomorphism}, the maps are mutually inverse on the two full lattices precisely when
$A'\cong\unit$ and $\B_A^{\loc}\simeq\Vect$. 

The correspondence also detects Frobenius extensions. Let $B\subseteq A$ be a
subalgebra such that $\B_B^\loc$ is unimodular. We prove that the extension
$B\subseteq A$ is Frobenius if and only if the corresponding tensor
subcategory $\beta(B)$ is unimodular. In particular, this criterion applies
to every subalgebra $B\subseteq A$ when $\B$ is nondegenerate.

\subsection*{Applications}

We first return to the canonical algebra
$\bA=\I_{\C}(\unit)\in\Z(\C)$ introduced above. This algebra is simple and
commutative, and $\Z(\C)$ is nondegenerate. The canonical equivalences above
and the main theorem therefore give
\begin{equation*}
  \Lalg{\bA}^{\mathrm{op}}
  \cong
  \Lten{\C}.
\end{equation*}
We then apply the same construction to the adjoint algebra $\bA_\M$
associated with an indecomposable exact module category, thereby describing
the tensor subcategories of the corresponding dual tensor category. In the
two settings, Frobenius extensions of $\bA$ and $\bA_\M$, respectively,
correspond exactly to unimodular tensor subcategories.

Our main application concerns Hopf algebras in braided finite tensor
categories.
Let $H$ be a Hopf algebra in $\B$. A Hopf ideal $J$ determines the coinvariant
subalgebra $H^{\operatorname{co}(H/J)}$, while a normal left coideal subalgebra
$K$ determines the Hopf ideal $HK^+$, where
$K^+=\Ker(\varepsilon_H|_K)$. We prove that these two constructions are
mutually inverse.

The categorical bridge is the adjoint algebra $\mathbf{A}$ of $H$ in the
Yetter--Drinfeld category ${}^H_H\mathcal{YD}(\B)$. Its subalgebras in
this category are precisely the normal left coideal subalgebras of $H$. On the
module side, Hopf ideals of $H$ correspond to the interval
$[\B,{}_H\B]_{\otimes}$. Under these identifications, our main correspondence
gives mutually inverse order-preserving bijections
\begin{equation*}
\begin{aligned}
  \{\text{Hopf ideals $J$ of }H\}
  &\longleftrightarrow
  \{\text{normal left coideal subalgebras $K$ of }H\},\\
  J&\longmapsto H^{\operatorname{co}(H/J)},
  \qquad
  K\longmapsto HK^+.
\end{aligned}
\end{equation*}
Thus the abstract Galois correspondence becomes an explicit correspondence
between quotient data and coinvariant data for $H$. This generalizes a known correspondence for finite-dimensional Hopf algebras to the braided setting.

\subsection*{Organization}
Section~\ref{sec:preliminaries} fixes the categorical and lattice-theoretic
preliminaries used throughout the paper. Section~\ref{sec:modular} proves that
central tensor subcategories are left-modular. In
Section~\ref{sec:galois-connection}, we construct the maps $\alpha$ and
$\beta$, establish the Galois relation, compute the closure operators, and
derive the Frobenius and canonical-algebra applications. Finally,
Section~\ref{sec:hopf-ideals} realizes the correspondence through the adjoint
algebra of a Hopf algebra and proves the bijection between Hopf ideals and
normal left coideal subalgebras. 

\subsection*{Acknowledgements}
HY was partly supported by a start-up grant from the University of Alberta and
an NSERC Discovery Grant. KS was supported by JSPS KAKENHI Grant Number JP24K06676. We used ChatGPT to explore proof strategies for a few parts of Sections~\ref{sec:modular} and \ref{sec:galois-connection}. All mathematical statements and proofs were independently checked by the authors, who take sole responsibility for the contents of the paper.


\section{Preliminaries}\label{sec:preliminaries}
Unless otherwise stated, the base field $\kk$ is algebraically closed.

Given a functor $F:\C\to\D$, we write $F(\C)$ for the essential image of $F$. We write $F^{\ra}$ for the right adjoint of $F$, if it exists. The unit and counit of the adjunction $F\dashv F^\ra$ are denoted as $\eta$ and $\varepsilon$, respectively. The adjunction yields isomorphisms
\begin{equation}\label{eq:zeta}
\begin{aligned}
   \Hom_{\C}(X, F^\ra(Y)) \xrightarrow{\zeta_F} \Hom_{\D}(F(X),Y) , \qquad
   f \longmapsto \left[ F(X) \xrightarrow{F(f)} FF^\ra(Y) \xrightarrow{\varepsilon_Y} Y  \right].
\end{aligned}
\end{equation}
The inverse of $\zeta_F$ is defined using $\eta$ and denoted as $\ozeta_F$.

When $F:\C\to\D$ is an exact functor between abelian categories, we write $\im(F)$ for the full subcategory of $\D$ consisting of subquotients of objects in $F(\C)$. The Deligne product of two locally finite $\kk$-linear abelian categories $\C$ and $\D$ is denoted by $\C \boxtimes \D$ \cite[\S1.11]{etingof2016tensor}.

We denote monoidal categories as $(\C,\otimes,\unit)$ by suppressing the associators and unitors. We write $\C^{\rev}$ for the monoidal category obtained from $\C$ by reversing the tensor product. 


\subsection{Finite tensor categories}
We use the standard terminology of finite tensor categories as in \cite{etingof2016tensor}. A \emph{finite tensor category} \cite{etingof2004finite} is a rigid monoidal category $\C$ such that $\C$ is a finite abelian category, the tensor product is $\kk$-linear in each variable and the unit object is simple. A \emph{tensor functor} between finite tensor categories is a $\kk$-linear strong monoidal functor which is also exact. Tensor functors are faithful \cite[Corollaire 2.10 (i)]{deligne1990categories} and admit right adjoints.

A \emph{tensor subcategory} of a finite tensor category $\C$ is a full subcategory $\D$ of $\C$ containing $\unit_\C$ such that $\D$ is closed under taking subobjects, quotients, finite direct sums, tensor products and duals. A tensor subcategory is itself a finite tensor category and the inclusion functor is a tensor functor. Given a tensor functor $F:\C\to\D$ between finite tensor categories, $\im(F)$ is a tensor subcategory of $\D$.


\subsubsection{Frobenius-Perron dimension}
We refer the reader to \cite[\S4]{etingof2016tensor} for the definition of Frobenius-Perron dimension of objects and categories. We denote the Frobenius-Perron dimension of an object $X$ in a finite tensor category $\C$ by $\FP_{\C}(X)$ and that of $\C$ by $\FP(\C)$. 

We call a tensor functor $F$ \emph{surjective} if $\im(F)=\D$ and we call it \emph{injective} if $F$ is fully faithful.  We note some well-known facts about tensor functors that we will frequently use.
\begin{lemma}\label{lem:tensor-functor-facts}
  Let $F:\C\to\D$ be a tensor functor between finite tensor categories.
  \begin{enumerate}
    \item $F$ is surjective if and only if $F^\ra$ is faithful. 
    \item If $F$ is surjective, then $F^{\ra}$ is exact.
    \item If $F$ is injective, then $\im(F)=F(\C)$.
  \end{enumerate}
\end{lemma}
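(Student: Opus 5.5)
The three parts are standard facts, and I would prove each one directly from the adjunction $F\dashv F^{\ra}$ together with the structure of finite tensor categories. Throughout, I use that every object of $\D$ is a quotient of a projective, and that $F$ is exact and faithful.

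For (1), the direction I expect to be the main obstacle is that surjectivity of $F$ implies faithfulness of $F^{\ra}$. The converse is easy.

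For the easy direction, suppose $F^{\ra}$ is faithful. This is equivalent to the counit $\varepsilon_Y: FF^{\ra}(Y)\to Y$ being an epimorphism for every $Y$. Hence every $Y$ is a quotient of $F(F^{\ra}Y)\in F(\C)$, and so $\im(F)=\D$.

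For the hard direction, assume $\im(F)=\D$. Then every object of $\D$ is a subquotient of some $F(X)$. I would first reduce to showing that $\varepsilon_P$ is epi for every projective $P$ of $\D$. Since $F^{\ra}$ preserves epimorphisms into the relevant objects up to naturality of $\varepsilon$, this reduction uses only that each $Y$ is a quotient of a projective. I would then invoke the known fact (Etingof--Ostrik) that a surjective tensor functor maps projectives to projectives and that every projective of $\D$ is a direct summand of some $F(P')$ with $P'$ projective in $\C$. A summand of an object in $F(\C)$ is clearly hit by $\varepsilon$, because $\varepsilon_{F(X)}$ is split epi by the triangle identity. Naturality of $\varepsilon$ with respect to the summand inclusion and projection then gives that $\varepsilon_P$ is split epi. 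If I prefer to avoid the cited fact, I would argue instead through the regular object: $F(R_\C)$ generates $\D$ under subquotients. Using rigidity, $P\otimes F(X)$ is projective, and $\unit$ is a subquotient of some $F(X)\otimes F(X)^*$. From this, $P$ is a summand of $P\otimes F(X)\otimes F(X)^*$, and the projective-cover/injective-hull arguments in a Frobenius category let me move $P$ into $F(\C)$ up to summands.

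For (2), I would use the Frobenius property: projectives and injectives coincide in $\C$ and in $\D$. By (1) and its proof, $F$ preserves projectives. The right adjoint of an exact functor preserves injectives, and a left adjoint that preserves projectives has an exact right adjoint. Concretely, $\Hom_\C(P,F^{\ra}(-))\cong\Hom_\D(F(P),-)$ is exact for every projective $P$, because $F(P)$ is projective. Since $\C$ has enough projectives, exactness of all these functors forces $F^{\ra}$ to be exact.

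For (3), suppose $F$ is fully faithful and exact. It suffices to show that $F(\C)$ is closed under subobjects, and by duality also under quotients. Let $Y\hookrightarrow F(X)$ be a subobject. I would apply the unit $\eta_X$, which is an isomorphism by full faithfulness, to the exact sequence $0\to Y\to F(X)\to Q\to 0$. Left exactness of $F^{\ra}$ together with the counit then shows that $F(\ker(X\to F^{\ra}Q))\to Y$ is an isomorphism. This step uses exactness of $F$ and the fact that $\varepsilon_{F(X)}$ is an isomorphism. Closure under quotients follows in the same way via cokernels, so $\im(F)=F(\C)$.
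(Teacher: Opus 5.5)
Your arguments for (1) and (2) are fine. Part (2) follows the paper's route: Etingof--Ostrik says surjective tensor functors preserve projectives, and a left adjoint that preserves projectives has an exact right adjoint. For (1), where the paper only cites Bruguières--Natale, you do not actually need the cited fact. A projective $Q\in\D$ is a subquotient of some $F(X)$. Since $Q$ is both projective and injective, it is then a direct summand of $F(X)$, and your naturality argument finishes the proof.

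\textbf{The gap is in (3).} By left exactness, $\ker(X\to F^{\ra}Q)\cong F^{\ra}(Y)$. So the map $F(\ker(X\to F^{\ra}Q))\to Y$ you produce is just the counit $\varepsilon_Y\colon FF^{\ra}(Y)\to Y$. For a fully faithful left adjoint, $\varepsilon_Y$ is invertible exactly when $Y$ lies in the essential image, so this step presupposes the conclusion. What your argument actually gives is only $FF^{\ra}(Y)\subseteq Y$.

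No argument using only exactness, full faithfulness and the adjunction can work. Consider the functor $\Vect\to\mathrm{Rep}(\bullet\to\bullet)$, $V\mapsto(V\xrightarrow{\id}V)$. It is exact and fully faithful, with right adjoint $(W_1\to W_2)\mapsto W_1$. Yet the subobject $(0\to\kk)$ of $(\kk\xrightarrow{\id}\kk)$ is not in its essential image, and your construction returns $0$ for it. So the tensor structure has to enter somewhere.

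The paper brings it in through Frobenius--Perron dimension. For injective $F$ one has $\FP(\im(F))=\FP(\C)$, so the surjective corestriction $\C\to\im(F)$ is an equivalence.

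Alternatively, you can repair (3) using your own (1) and (2):
\begin{itemize}
\item Apply them to the corestriction $G\colon\C\to\im(F)$, which is a surjective tensor functor. Then $G^{\ra}$ is exact and faithful, and hence reflects isomorphisms.
\item The unit $\eta$ of $G\dashv G^{\ra}$ is invertible because $G$ is fully faithful. The triangle identity $G^{\ra}(\varepsilon_Y)\circ\eta_{G^{\ra}(Y)}=\id$ then shows that $G^{\ra}(\varepsilon_Y)$ is invertible.
\item Therefore $\varepsilon_Y\colon GG^{\ra}(Y)\to Y$ is an isomorphism for every $Y\in\im(F)$, which gives $\im(F)=F(\C)$.
\end{itemize}
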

\begin{proof}
Part (1) is \cite[Lemma~3.1 and Remark~3.3]{bruguieres2011exact}. Part (2) follows from \cite[Theorem~2.5]{etingof2004finite} and \cite[Lemma 2.1]{shimizu2017relative}. For (3), consider the corestriction $\C\to\im(F)$. Since $F$ is injective, \cite[Corollary~6.3.5(i)]{etingof2016tensor} gives
$\FP(\im(F))=\FP(\C)$. The corestriction is surjective, and hence it is
an equivalence by \cite[Proposition~6.3.4]{etingof2016tensor}. Therefore
every object of $\im(F)$ is isomorphic to $F(X)$ for some $X\in\C$.
\end{proof}

We record another fact about Frobenius-Perron dimension that we will use in this paper. 

\begin{lemma}\label{lem:FPdim}
  Let $F_1:\C\to\D$ and $F_2:\D\to\E$ be tensor functors between finite tensor categories with $F_1$ injective and $F_2$ surjective. Denote $F=F_2\circ F_1$. If $F_2^\ra(\unit_{\E})$ belongs to $\im(F_1)$, then 
  \begin{equation*}
    \FP_\C(F^\ra(\unit_\E)) = \frac{\FP(\D)}{\FP(\E)}.
  \end{equation*}
\end{lemma}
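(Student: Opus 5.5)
We compute $F^\ra(\unit_\E)=F_1^\ra F_2^\ra(\unit_\E)$ and reduce everything to the known formula for surjective functors. Recall from \cite[Lemma~6.2.4 / Proposition~6.3.4]{etingof2016tensor} (or the standard argument there) that for a surjective tensor functor $G:\D\to\E$ one has $\FP_\D(G^\ra(\unit_\E))=\FP(\D)/\FP(\E)$. This applies to $F_2$ and gives
\[
  \FP_\D(F_2^\ra(\unit_\E))=\frac{\FP(\D)}{\FP(\E)}.
\]
The identity comes from the fact that $G^\ra$ is exact (Lemma~\ref{lem:tensor-functor-facts}(2)). Hence $G^\ra(\unit)\cong\bigoplus_X [\,G(P_X):\unit\,]\cdots$, or more cleanly one compares the regular elements of the Grothendieck rings, using $\Hom_\D(P_X,G^\ra\unit)\cong\Hom_\E(G(P_X),\unit)$.

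So it remains to show that $\FP_\C(F_1^\ra(Y))=\FP_\D(Y)$ for $Y=F_2^\ra(\unit_\E)\in\im(F_1)$. Since $F_1$ is injective, Lemma~\ref{lem:tensor-functor-facts}(3) gives $\im(F_1)=F_1(\C)$, so $Y\cong F_1(X)$ for some $X\in\C$. Full faithfulness of $F_1$ means the unit $\eta:\id_\C\to F_1^\ra F_1$ is an isomorphism. Therefore $F_1^\ra(Y)\cong F_1^\ra F_1(X)\cong X$. Moreover, $\FP_\D(F_1(X))=\FP_\C(X)$, because Frobenius–Perron dimension is preserved by tensor functors: $F_1$ induces a ring homomorphism of Grothendieck rings sending the basis into nonnegative combinations, and FP dimension is the unique character positive on simples.

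Combining the two steps,
\[
  \FP_\C(F^\ra(\unit_\E))=\FP_\C(X)=\FP_\D(F_1(X))=\FP_\D(F_2^\ra(\unit_\E))=\frac{\FP(\D)}{\FP(\E)}.
\]

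The main obstacle is pinning down the formula for $F_2$ with the correct reference or proof in the non-semisimple setting. The cleanest route is to use that $F_2^\ra$ is exact and that $F_2$ maps projectives to projectives, so $F_2(R_\D)$ is a multiple of the regular object $R_\E$. Writing $F_2^\ra(\unit)$ through multiplicities $[F_2^\ra\unit:L_i]=\dim\Hom(F_2 P_i,\unit)$ then yields $\FP(F_2^\ra\unit)=\FP(F_2(R_\D))\cdot\FP(\E)^{-1}\cdot\FP(\D)/\FP(R_\D)$, up to the standard normalization $\FP(R)=\FP(\C)$. This is exactly the computation behind \cite[Lemma~6.2.4]{etingof2016tensor}, which I would cite directly.
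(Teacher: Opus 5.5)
Your proposal is correct and is essentially the paper's proof. You choose $X$ with $F_1(X)\cong F_2^{\ra}(\unit_\E)$ via Lemma~\ref{lem:tensor-functor-facts}(3), use that the unit of $F_1\dashv F_1^{\ra}$ is an isomorphism to get $F^{\ra}(\unit_\E)\cong X$, and then combine invariance of $\FP$ under the tensor functor $F_1$ with \cite[Lemma~6.2.4]{etingof2016tensor} for the surjective functor $F_2$. The paper likewise just cites that lemma for the surjective case, so your extra sketch of that formula is not needed.
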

\begin{proof}
  As $F_1$ is injective, by Lemma~\ref{lem:tensor-functor-facts} there exists an object $X$ in $\C$ such that $F_1(X)\cong F_2^\ra(\unit_\E)$. Thus,  
  \[\FP_{\C}(X) = \FP_{\D}(F_1(X)) = \FP_{\D}(F_2^\ra(\unit_\E)) = \frac{\FP(\D)}{\FP(\E)}.\]
  The first equality holds because tensor functors preserve Frobenius-Perron
  dimension \cite[Proposition~4.5.7]{etingof2016tensor}. The last one follows
  from \cite[Lemma~6.2.4]{etingof2016tensor}, since $F_2$ is surjective. On the
  other hand, we have
  \begin{align*}
    \FP_\C(F^\ra(\unit_\E)) = \FP_\C(F_1^\ra(F_2^\ra(\unit_\E))) = \FP_\C(F_1^\ra(F_1(X))) = \FP_\C(X),
  \end{align*}  
  where the last equality holds because $F_1$ is fully faithful, hence the unit $\eta_X : X \to F_1^\ra(F_1(X))$ of $F_1\dashv F_1^\ra$ is an isomorphism. Combining with the previous equation, this proves the claim. 
\end{proof}


\subsubsection{Relative Drinfeld centers}\label{ssec:relative-drinfeld-centers}
Let $\C$ be a finite tensor category and $\D\subseteq \C$ a tensor subcategory. The \emph{relative Drinfeld center} $\Z(\D;\C)$ of $\C$ over $\D$ is the category whose objects are pairs $(V,\sigma)$ consisting of an object $V\in\C$ and a \emph{half-braiding} with the objects of $\D$, that is, a natural isomorphism $\sigma=\{\sigma_X:V\otimes X \to X\otimes V\}_{X\in\D}$ satisfying the hexagon axioms. Morphisms in $\Z(\D;\C)$ are morphisms in $\C$ that are compatible with the natural isomorphisms $\sigma$.

The Drinfeld center $\Z(\C):=\Z(\C;\C)$ of $\C$ is the relative Drinfeld center of $\C$ over itself. Restricting the half-braidings to $\D$ defines a surjective tensor functor \cite[Lemma~4.6]{shimizu2019non}
\begin{equation}\label{eq:ZC-to-ZDC}
  \R_\D: \Z(\C) \to \Z(\D;\C), \quad (V,\sigma)\mapsto (V,\sigma|_{\D}).
\end{equation}
Moreover, there is a forgetful functor 
\begin{equation*}
  \mathrm{U}_\D: \Z(\D;\C) \to \C, \quad (V,\sigma)\mapsto V
\end{equation*}
which is a surjective tensor functor. We denote the right adjoint of $\U_\D$ by $\I_\D$. Being a right adjoint of a strong monoidal functor, the functor $\I_\D$ is lax monoidal. We will denote $\U:=\U_{\C}$ and $\I:=\I_{\C}$.


\subsubsection{Braided finite tensor categories}
If $\B$ is a braided monoidal category, we denote by $\oB$ the same underlying tensor category with the reversed braiding $\overline{c}_{X,Y}:=c_{Y,X}^{-1}$. A typical example of a braided finite tensor category is the Drinfeld center $\Z(\C)$ of a finite tensor category $\C$.

Let $\B$ be a braided monoidal category and $\C$ a monoidal category. A monoidal functor $F:\B\to\C$ is called \emph{central} if there exists a braided monoidal functor $\widetilde{F}:\B \to\Z(\C)$ such that $F = \mathrm{U}_\C\circ \widetilde{F}$. In this case, for $X\in\B$, we denote the half-braidings of $F(X)$ with arbitrary $V\in\C$ by $\sigma_{X,V}: F(X) \otimes V \to V\otimes F(X)$ and these are natural in $X$ and $V$.

For a tensor subcategory $\D\subseteq\B$, we write $\D'$ for its
M\"uger centralizer in $\B$. In particular,
$\B'$ is the M\"uger center of $\B$.
We say that $\B$ is \emph{nondegenerate} if $\B'\simeq \Vect$. 
We say that $\B$ is \emph{factorizable} if the functor $G:\B\boxtimes \oB\to\Z(\B)$ 
defined by $G(X\boxtimes Y) = (X\otimes Y, c_{X,-}\circ c_{-,Y}^{-1})$ is an equivalence. 
It is known that a braided finite tensor category is factorizable if and 
only if it is nondegenerate \cite{shimizu2019non}.


\subsection{Algebras in monoidal categories}
In this subsection, we assume that all monoidal categories under
consideration have coequalizers and that tensor products preserve them
in each variable.

Let $(A,m_A,u_A)$ be an algebra in a monoidal category $\C$. By an algebra map, we always mean a morphism that preserves multiplication and unit. 
We define a \emph{subalgebra} of $A$ to be an algebra $B$ equipped with a monic algebra map $B \to A$.


\subsubsection{Modules and bimodules}

A right $A$-module is a pair $(M,a_M^r)$ consisting of an object $M\in\C$
and a morphism $a_M^r:M\otimes A\to M$ satisfying the usual axioms.
Similarly, a left $A$-module is a pair $(N,a_N^l)$. The relative tensor
product of a right $A$-module $(M,a_M^r)$ and a left $A$-module $(N,a_N^l)$
is defined as the coequalizer of
$a_M^r\otimes\id_N$ and $\id_M\otimes a_N^l$ from
$M\otimes A\otimes N$ to $M\otimes N$.
We denote by $\pi: M\otimes N \to M\otimes_A N$ the epimorphism onto the relative tensor product. We denote by $\C_A$ and ${}_A \C_A$ the categories of right $A$-modules and $A$-bimodules in $\C$, respectively. 

If $\B$ is a braided monoidal category and $A$ is an algebra in $\B$, we call $A$ commutative if $m\circ c_{A,A} = m$.

Suppose that $\C$ is monoidal. We call a pair $(A,\sigma)$ consisting of an algebra $A$ in $\C$ and a half-braiding $\sigma$, \emph{central commutative}, if  $(A,\sigma)$ is a commutative algebra in $\Z(\C)$. Define 
\[ \C_A^{\sigma} = \{(M,a_M^l,a_M^r) \in {}_A \C_A \mid a_M^l = a_M^r\circ \sigma_M \}. \]


\subsubsection{Tensor categories from algebras}
Suppose that $\C$ is a monoidal category with coequalizers such that the tensor product preserves coequalizers in each variable. Then, the category ${}_A \C_A$ is a monoidal category with the relative tensor product $\otimes_A$ over $A$ and the unit object $A$. For a central commutative algebra $(A,\sigma)$ in $\C$, the category $\C_A^{\sigma}$ is a monoidal subcategory of ${}_A \C_A$. 

Let $\B$ be a braided monoidal category and $A$ a commutative algebra in $\B$. Then $\B_A  := \B_A^{c_{A,-}}$ is a monoidal category with the tensor product $\otimes_A$ over $A$ and the unit object $A$. We define
\[ \B_A^{\loc} = \{(M,a_M^r)\in\B_A \mid a_M^r = a_M^r \circ c_{A,M} \circ c_{M,A} \} .\]
Pareigis proved that $\B_A^{\loc}$ is a monoidal subcategory of $\B_A$ that is braided with the braiding $c^A$ induced from that of $\B$ \cite{pareigis1995braiding}. Moreover, he proved that the braiding $c^A_{M,N}$ is well-defined for $M \in \B_A ,N \in \B_A^\loc$, which yields 
\begin{equation*}
 \text{a fully faithful tensor functor } \B_A \to \Z(\B_A^\loc;\B_A).
\end{equation*}
Equivalently, the inclusion $\B_A^{\loc}\hookrightarrow\B_A$ is central.


\subsubsection{Algebras in finite tensor categories}
In this subsection, the categories are finite tensor categories. 
By an \emph{ideal} of an algebra $A$, we mean a subobject $I\subseteq A$ such that the multiplication maps $A\otimes I\to A$ and $I\otimes A\to A$ factor through $I$. We say that an algebra is \emph{simple} if it admits only two ideals: $0$ and $A$. 

\begin{theorem}\label{thm:central-simple-module-category}
  Let $\C$ be a finite tensor category and $(A,\sigma)$ a central commutative algebra in $\C$. Suppose that $A$ is simple in $\C$. Then, the category $\C_A^{\sigma}$ is a finite tensor category and the free module functor $F_A:\C\to \C_A^{\sigma}$ is a surjective tensor functor.
\end{theorem}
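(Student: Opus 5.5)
Since $(A,\sigma)$ is a commutative algebra in $\Z(\C)$, I reduce to the braided case. Set $\B:=\Z(\C)$. Then $\C_A^\sigma$ should be the image of $\B_{(A,\sigma)}$ under the forgetful functor. More precisely, I will show that the functor $\B_{(A,\sigma)}\to\C_A^\sigma$ is a monoidal equivalence: it forgets the half-braiding on a module $M$ and uses $\sigma$ to make the right action into a bimodule. The argument is a standard relative-center one. A right $A$-module $M$ in $\C$ gets a half-braiding induced by $\sigma$ exactly when it is local in the appropriate sense. Still, the cleanest route may avoid this identification entirely, so I will argue directly in $\C$.

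\textbf{Step 1: monoidal structure and rigidity.} The category $\C_A^\sigma$ is a monoidal subcategory of ${}_A\C_A$ with unit $A$. It is abelian, since kernels and cokernels of bimodule maps are computed in $\C$. The condition $a^l=a^r\circ\sigma_M$ is preserved by these constructions because $\sigma$ is natural. The category is finite, being the category of modules over the monad $-\otimes A$ on the finite category $\C$. For rigidity I use the standard fact that in ${}_A\C_A$ the dual of a bimodule $M$ is $M^*$ with the transposed actions, with evaluation factoring through $\otimes_A$. I then check that centrality is inherited by $M^*$, using the half-braiding $\sigma_{M^*}=(\sigma_M^{-1})^*$-type formula. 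Finally, $\otimes_A$ is exact in each variable, and this is where exactness of $A$ is needed. I would invoke the results cited in the introduction: simple algebras are exact \cite{coulembier2025simple}, and hence $\C_A^\sigma$ is a finite multitensor category \cite{shimizu2026commutative}.

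\textbf{Step 2: simplicity of the unit.} Subobjects of the unit $A$ in $\C_A^\sigma$ are subobjects $I\subseteq A$ stable under the right action, and hence under the left action too, since the left action is $a^r\circ\sigma$. Such a subobject is exactly an ideal of $A$. Because $A$ is simple, $\unit=A$ is simple, so $\C_A^\sigma$ is a finite tensor category.

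\textbf{Step 3: the free functor.} Define $F_A(X)=X\otimes A$. Give it the right action by multiplication and the left action $A\otimes X\otimes A\xrightarrow{\sigma_X\otimes\id}X\otimes A\otimes A\to X\otimes A$. This object lies in $\C_A^\sigma$; checking this uses commutativity of $(A,\sigma)$ in $\Z(\C)$. The monoidal structure $F_A(X)\otimes_A F_A(Y)\cong X\otimes Y\otimes A$ is built from $\sigma$ as well, and the hexagon axiom for $\sigma$ gives coherence. Exactness holds because $\otimes$ is exact. For surjectivity, every $M\in\C_A^\sigma$ is a quotient of $F_A(M)$ via the action map $a^r_M$, which is a bimodule map precisely by the $\sigma$-condition. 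Hence $\im(F_A)=\C_A^\sigma$.

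The main obstacle I expect is Step 1, specifically exactness of $\otimes_A$ and the rigidity of $\C_A^\sigma$ in the non-semisimple setting. This is exactly the content imported from the cited exactness results for simple algebras, so I would rely on them rather than reprove them.
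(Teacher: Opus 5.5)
Your proposal is correct and follows the paper's proof. The finite tensor structure of $\C_A^{\sigma}$ is imported from the cited results (simple implies exact, and exactness gives the finite tensor category), your Step~2 makes the simplicity of the unit $A$ explicit via ideals, and surjectivity of $F_A$ comes, exactly as in the paper, from the action map $M\otimes A\to M$, a morphism in $\C_A^{\sigma}$ that is split epic in $\C$ via the unit of $A$.

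Do drop the two discarded side claims, because they are false:
\begin{itemize}
  \item The forgetful functor $\Z(\C)_{(A,\sigma)}\to\C_A^{\sigma}$ is not an equivalence in general. Take $A=\unit$: it becomes $\Z(\C)\to\C$.
  \item ${}_A\C_A$ is not rigid in general, and duals are not given by $M^*$ with transposed actions. Already for $M=A$ this would force $A^*\cong A$, i.e.\ $A$ Frobenius. This is precisely why rigidity has to come from the citation.
\end{itemize}
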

\begin{proof}
Since $A$ is simple, it is exact by
\cite[Theorem~7.1]{coulembier2025simple}. Hence $\C_A^\sigma$ is a finite tensor category by \cite[Theorem~5.4]{shimizu2026commutative}.  For $M\in\C_A^\sigma$, the action map $M\otimes A\to M$ is an
epimorphism, since it has a section induced by the unit of $A$. Thus every
object of $\C_A^\sigma$ is a quotient of a free module. Hence $F_A$ is
surjective.
\end{proof}

Also note that if $\B$ is braided and $A$ is a simple commutative algebra
in $\B$, then $\B_A$ is a finite tensor category and $\B_A^\loc$ is a braided finite tensor category \cite[Theorem B]{shimizu2026commutative}.


\subsubsection{Central lift of free functor}
Let $\B$ be a braided finite tensor category and $A$ a commutative simple algebra
in $\B$. For $X\in\B$ and $M\in\B_A$, define
\[
\sigma^-_{X,M}: (X\otimes A)\otimes_A M
\cong X\otimes M
\xrightarrow{c_{M,X}^{-1}} M\otimes X
\cong M\otimes_A(X\otimes A).
\]
This is the inverse-braiding central structure on the free-module functor
associated with the bimodule structures in
\cite[\S8.8]{etingof2016tensor} (see also \cite[\S2.2]{shimizu2026transparent}).
It defines a braided tensor functor
\[
  F_A^-:\oB\longrightarrow\Z(\B_A).
\]
When $F_A^-$ is regarded as a tensor
functor from $\B$, using the common underlying tensor category of
$\B$ and $\oB$, it is not braided in general. In the rest of the paper, we use the notation $\tF_A = F_A^-$. Since $\tF_A:\B\to \Z(\B_A)$ is a tensor functor, it is faithful.


\subsection{Algebra maps and monoidal adjunctions}\label{ssec:alg-mon-adj}
If $F:\C\to\D$ is a lax monoidal functor, and $f:A\to B$ is an algebra map in $\C$, then $F(f):F(A)\to F(B)$ is an algebra map in $\D$. Moreover, if $\gamma: F\Rightarrow G$ is a monoidal natural transformation between lax monoidal functors $F,G:\C\to\D$, then $\gamma_A:F(A)\to G(A)$ is an algebra map in $\D$.

Now suppose that $A$ is an algebra in $\C$, $F:\C\to\D$ is a strong monoidal functor and $B$ is an algebra in $\D$. Moreover, suppose that $F$ admits a right adjoint $F^\ra$. Then, the adjunction $F\dashv F^{\ra}$ is a monoidal adjunction. In particular, the unit $\eta$ of the adjunction is a monoidal natural transformation. Thus, the isomorphism $\ozeta_F$ from \eqref{eq:zeta} restricts to an isomorphism between the set of algebra maps from $F(A)$ to $B$ and the set of algebra maps from $A$ to $F^\ra(B)$. This applies, for instance, when $F$ is a tensor functor between finite tensor categories. In this case, $F^\ra$ exists and $F\dashv F^{\ra}$ is a monoidal adjunction.

Suppose that $\C,\D$ and $\E$ are monoidal categories, and $F:\C\to\D$, $G:\D\to\E$ and $H:\C\to\E$ are strong monoidal functors that each admit right adjoints. Moreover, assume that $H \cong G\circ F$. Define the algebras
\[ A = H^\ra(\unit_\E) \cong F^\ra (G^\ra(\unit_\E)) \qquad \text{and} \qquad B = F^{\ra}(\unit_\D).\]
Let $\iota_G:G(\unit_\D)\xrightarrow{\sim}\unit_\E$ be the monoidal unit
isomorphism. Its adjunct is an algebra map
\[
  u:=\ozeta_G(\iota_G):\unit_\D\longrightarrow G^{\ra}(\unit_\E).
\]
Applying $F^{\ra}$ to $u$ gives another algebra map
$j:=F^{\ra}(u):B\to A$. The following result is \cite[Lemma~2.1]{shimizu2026transparent}. It states that factoring an algebra map through $j$ is equivalent to factoring its adjoint map through $G$.

\begin{lemma}\label{lem:alg-map-factorization}
Suppose that we are in the above setting. Take $C\in \C$ and $i\in \Hom_\C(C,A)$.
\begin{enumerate}
  \item For every $f:C\to B=F^\ra(\unit_\D)$, one has $\zeta_H(j\circ f)=G\bigl(\zeta_F(f)\bigr)$. 
  \item A map $i:C\to A$ factors through $j$ if and only if there exists a map $\varphi:F(C)\to\unit_\D$ such that $\zeta_H(i)=G(\varphi)$.
\end{enumerate}
The same statement holds for algebra maps.
\end{lemma}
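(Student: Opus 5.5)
Both parts are formal consequences of the adjunctions $F\dashv F^{\ra}$, $G\dashv G^{\ra}$, $H\dashv H^{\ra}$ together with the chosen identification $H\cong G\circ F$. The natural move is to identify $H^{\ra}$ with $F^{\ra}G^{\ra}$ via the composite adjunction, so that the counit of $H$ becomes $\varepsilon^H_Y=\varepsilon^G_Y\circ G(\varepsilon^F_{G^{\ra}Y})$. With this convention, part (1) is a direct computation. For $f:C\to B=F^{\ra}(\unit_\D)$ we have
\[
  \zeta_H(j\circ f)=\varepsilon^G_{\unit_\E}\circ G\bigl(\varepsilon^F_{G^{\ra}\unit_\E}\bigr)\circ GF\bigl(F^{\ra}(u)\bigr)\circ GF(f).
\]
Naturality of $\varepsilon^F$ turns $\varepsilon^F_{G^{\ra}\unit_\E}\circ FF^{\ra}(u)$ into $u\circ\varepsilon^F_{\unit_\D}$. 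Then $\varepsilon^G_{\unit_\E}\circ G(u)=\zeta_G(u)=\iota_G$. This gives $\zeta_H(j\circ f)=\iota_G\circ G(\zeta_F(f))$, which is $G(\zeta_F(f))$ once we identify $G(\unit_\D)$ with $\unit_\E$ via $\iota_G$, as the statement implicitly does.

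For part (2), the forward direction follows from (1): if $i=j\circ f$, put $\varphi=\zeta_F(f)$. For the converse, suppose $\zeta_H(i)=G(\varphi)$ and set $f:=\ozeta_F(\varphi)$, so that $\zeta_F(f)=\varphi$. By (1), $\zeta_H(j\circ f)=G(\varphi)=\zeta_H(i)$. Since $\zeta_H$ is bijective, $i=j\circ f$.

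For the algebra version, recall from \S\ref{ssec:alg-mon-adj} that $\zeta_F$ and $\zeta_H$ restrict to bijections on algebra maps when $C$ is an algebra. The map $j=F^{\ra}(u)$ is an algebra map because $u$ is one and $F^{\ra}$ is lax monoidal. Also, $G(\varphi)$ is an algebra map whenever $\varphi$ is, and $\iota_G$ is an algebra isomorphism. The same argument then goes through: if $\varphi$ is an algebra map, so is $f=\ozeta_F(\varphi)$; conversely, if $f$ is an algebra map, so is $\varphi=\zeta_F(f)$. The one point needing care is that $\varphi$ in (2) can be taken to be an algebra map when $i$ factors through an algebra map $f$; this holds because $\zeta_F$ preserves algebra maps.

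The main obstacle is only bookkeeping: the isomorphism $H\cong GF$ must be monoidal, so that the identification $H^{\ra}\cong F^{\ra}G^{\ra}$ is compatible with the counits as above. I would fix this convention at the outset and suppress the isomorphism.
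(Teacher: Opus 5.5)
Your proof is correct. The composite-adjunction counit $\varepsilon^H=\varepsilon^G\circ G\bigl(\varepsilon^F_{G^{\ra}(-)}\bigr)$, naturality of $\varepsilon^F$ applied to $u$, and $\zeta_G(u)=\iota_G$ give (1); injectivity of $\zeta_H$ gives (2); and the restriction of $\zeta_F,\ozeta_F$ to algebra maps from \S\ref{ssec:alg-mon-adj} gives the algebra version. The paper does not prove this lemma itself but imports it from \cite[Lemma~2.1]{shimizu2026transparent}, so there is no in-paper argument to compare against; yours is the standard formal argument for this statement.
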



\subsection{Restriction and extension of scalars}
Let $\B$ be a braided monoidal category with coequalizers such that the tensor product preserves coequalizers. Let $A$ be a commutative algebra in $\B$ and $B$ a subalgebra of $A$. Then, $B$ is also a commutative algebra in $\B$.

Define the functor $\Ind_B^A:\B_B\to\B_A$ by
$\Ind_B^A(M)=(M\otimes_B A,a_{M\otimes_B A}^r)$. Here the right action of
$A$ on $M\otimes_B A$ is the unique map satisfying
\begin{equation*}
  a_{M\otimes_B A}^r \circ (\pi \otimes \id_A) = \pi \circ (\id_M\otimes m_A) : M \otimes A \otimes A \to M\otimes_B A,
\end{equation*}
where $\pi: M\otimes A \to M\otimes_B A$ denotes the epimorphism onto the coequalizer. $\Ind_B^A$ is a left adjoint of the restriction functor $\Res_B^A:\B_A\to\B_B$. 
We note that restriction of scalars preserves local modules. Moreover, we have:
\begin{lemma}\label{lem:res-of-local}
Suppose that we are in the above setting. If $M\in \B_A^\loc$, then $\Res_B^A(M)\in\B_B^\loc$.
\end{lemma}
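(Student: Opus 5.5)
The plan is to check the locality condition directly. We have $M \in \B_A^\loc$ with action $a_M^r : M\otimes A\to M$ satisfying $a_M^r = a_M^r\circ c_{A,M}\circ c_{M,A}$. The restricted module $\Res_B^A(M)$ has the same underlying object $M$ and action
\[
a^r_{\Res M} := a_M^r\circ(\id_M\otimes i_B),
\]
where $i_B:B\to A$ is the monic algebra map. We must show that $a^r_{\Res M}\circ c_{B,M}\circ c_{M,B} = a^r_{\Res M}$.

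The only tool is naturality of the braiding applied to $i_B$. Naturality of $c$ in each variable gives
\[
c_{M,A}\circ(\id_M\otimes i_B) = (i_B\otimes\id_M)\circ c_{M,B}
\qquad\text{and}\qquad
c_{A,M}\circ(i_B\otimes\id_M) = (\id_M\otimes i_B)\circ c_{B,M}.
\]
Combining these yields
\[
c_{A,M}\circ c_{M,A}\circ(\id_M\otimes i_B) = (\id_M\otimes i_B)\circ c_{B,M}\circ c_{M,B}.
\]

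We then compute
\[
a^r_{\Res M}\circ c_{B,M}\circ c_{M,B}
= a_M^r\circ(\id_M\otimes i_B)\circ c_{B,M}\circ c_{M,B}
= a_M^r\circ c_{A,M}\circ c_{M,A}\circ(\id_M\otimes i_B).
\]
By locality of $M$ over $A$, the right-hand side equals $a_M^r\circ(\id_M\otimes i_B) = a^r_{\Res M}$, which is what we wanted.

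For completeness, $\Res_B^A(M)$ is indeed an object of $\B_B$: it is a right $B$-module because $i_B$ is an algebra map. The bimodule structure of $\B_B = \B_B^{c_{B,-}}$ is determined by the right action through the braiding, so no further compatibility needs checking. There is no genuine obstacle; the only care needed is to use the braiding conventions consistently, namely $c_{A,M}\circ c_{M,A}$ as the double braiding acting on $M\otimes A$.
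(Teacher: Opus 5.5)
Your proposal is correct and matches the paper's proof: both use naturality of the braiding to move $\id_M\otimes i_B$ past the double braiding, and then apply the locality of $M$ over $A$. Your two naturality identities are the step the paper compresses into ``naturality of braiding,'' and they are stated correctly.
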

\begin{proof}
Take $(M,a_M^r)\in\B_A^\loc$ and write $\iota:B\hookrightarrow A$ for the
inclusion. Then
$\Res_B^A(M,a_M^r)=(M,a_M^r\circ(\id_M\otimes\iota))\in\B_B$.
Observe that
\begin{align*}
a_M^r\circ(\id_M\otimes \iota) \circ c_{B,M} \circ c_{M,B} & = a_M^r \circ c_{A,M} \circ c_{M,A} \circ (\id_M\otimes \iota) = a_M^r\circ (\id_M\otimes\iota) ,
\end{align*}
where the first equality uses naturality of braiding and the second that $(M,a_M^r)$ is a local $A$-module. Hence, we are done.
\end{proof}

Now suppose that $\B$ is a braided finite tensor category and $A$ is a commutative simple algebra in $\B$. Then, $B$ is also a commutative simple algebra in $\B$, see \cite[Lemma~2.3]{shimizu2026transparent}.
\begin{lemma}\label{lem:ind-surjective}
\begin{enumerate}
  \item $\Ind_B^A$ is a surjective tensor functor.
  \item $\Res_B^A$ is a faithful and exact lax monoidal functor.
\end{enumerate}  
\end{lemma}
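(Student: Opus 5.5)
The plan is to reduce both statements to the free module functors. We already know that $F_A:\B\to\B_A$ and $F_B:\B\to\B_B$ are surjective tensor functors (Theorem~\ref{thm:central-simple-module-category} applied to $\C=\B$, $\sigma=c$). Here we use that $B$ is simple commutative by \cite[Lemma~2.3]{shimizu2026transparent}, so that $\B_B$ is a finite tensor category. The key natural isomorphism is $\Ind_B^A\circ F_B\cong F_A$, i.e. $(X\otimes B)\otimes_B A\cong X\otimes A$.

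For (1), first check that $\Ind_B^A$ is a strong monoidal functor $\B_B\to\B_A$. Comparing $(M\otimes_B N)\otimes_B A$ with $(M\otimes_B A)\otimes_A(N\otimes_B A)$ is the standard argument for extension of scalars along a map of commutative algebras. One verifies that the canonical map induced by $M\otimes N\otimes A\to (M\otimes A)\otimes (N\otimes A)$, $m\otimes n\otimes a\mapsto (m\otimes u_A)\otimes (n\otimes a)$, descends to the coequalizers. Its inverse comes from the multiplication of $A$, and commutativity of $A$ (together with the braided right $A$-module structures in $\B_A=\B_A^{c_{A,-}}$) gives well-definedness. Alternatively, I would bypass this by noting that on free modules the isomorphism is just $F_A(X\otimes Y)\cong F_A(X)\otimes_A F_A(Y)$ transported through $\Ind_B^A F_B\cong F_A$, and then extending to all of $\B_B$ by right exactness, since every object is a cokernel of a map of free modules. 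Next I would check that $\Ind_B^A$ is exact. It is a left adjoint, hence right exact. For left exactness, I would use that $\B_B$ is a finite tensor category, so every object is a quotient and also a subobject of a free module (by rigidity, free modules are projective-injective enough). The cleaner route is the following: $\Ind_B^A$ is a $\kk$-linear right exact monoidal functor between finite tensor categories, and a right exact monoidal functor between rigid categories preserves duals and is then automatically exact. This holds because $-\otimes X$ is exact and $\Ind(M)$ has a dual, so $\Ind$ is left adjoint to $\Ind$ twisted by duals, giving left exactness. Hence $\Ind_B^A$ is a tensor functor. Surjectivity is immediate: every object of $\B_A$ is a quotient of $F_A(X)\cong \Ind_B^A(F_B(X))$, so $\im(\Ind_B^A)=\B_A$.

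For (2), note that $\Res_B^A$ is the right adjoint of the tensor functor $\Ind_B^A$, hence lax monoidal by the discussion in \S\ref{ssec:alg-mon-adj}. By Lemma~\ref{lem:tensor-functor-facts}(1),(2), surjectivity of $\Ind_B^A$ gives that $\Res_B^A$ is faithful and exact. Both properties can also be seen directly, since $\Res_B^A$ does not change the underlying object and the forgetful functors to $\B$ are faithful exact. The main obstacle is the monoidality and exactness of $\Ind_B^A$ in the nonsemisimple setting. I expect the rigidity argument (right exact monoidal functor between rigid categories is exact) to handle exactness, and the free-module reduction to handle the monoidal structure.
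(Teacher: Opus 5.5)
Your proof is correct, but it takes a different route from the paper. Part (2) is the same argument as the paper's.

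\textbf{How the paper argues.} The paper never checks monoidality or exactness of $\Ind_B^A$ by hand. It observes that $\Res_B^A(A)$ is local in $\B_B$, so it is a commutative algebra in $\Z(\B_B)$. Via Schauenburg's lemma it identifies $\B_A\simeq(\B_B)_A^\sigma$, so that $\Ind_B^A$ becomes the free $A$-module functor on the finite tensor category $\B_B$. It then checks that $A$ stays simple in $\B_B$ and applies Theorem~\ref{thm:central-simple-module-category}, which gives ``surjective tensor functor'' in one step.

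\textbf{How you argue, and the trade-off.} You use only that $\B_A$ and $\B_B$ are finite tensor categories. Strong monoidality is the standard extension-of-scalars computation, exactness comes from rigidity, and surjectivity comes from $F_A\cong\Ind_B^A\circ F_B$. Your route is more elementary: it avoids Schauenburg's equivalence and the simplicity of $A$ in $\B_B$. The paper's route is shorter, and it also produces the identification of $\Ind_B^A$ with a free-module functor. That identification is reused later, in Proposition~\ref{prop:FP-beta-B} and Theorem~\ref{thm:frob-extn-unimodular}.

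\textbf{The exactness step needs repair.} Being a left adjoint only gives right exactness, so the claim that ``$\Ind$ is left adjoint to $\Ind$ twisted by duals'' does not give left exactness. What is true is that $\Ind_B^A$ is a \emph{right} adjoint. Using $\Ind_B^A(X)^*\cong\Ind_B^A(X^*)$, one gets
\[
\Hom_{\B_A}(Y,\Ind_B^A X)\cong\Hom_{\B_A}(\Ind_B^A(X^*),Y^*)\cong\Hom_{\B_B}(X^*,\Res_B^A(Y^*))\cong\Hom_{\B_B}({}^*\Res_B^A(Y^*),X),
\]
so ${}^*\Res_B^A((-)^*)\dashv\Ind_B^A$. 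Equivalently, apply the exact duality functors on both sides to the right exact functor $\Ind_B^A$. A simpler alternative: $\Ind_B^A(M)=M\otimes_B\Res_B^A(A)$ is tensoring with a fixed object in the rigid category $\B_B$, hence exact, and exactness in $\B_A$ is detected in $\B$.

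\textbf{The free-module alternative for monoidality.} This route would need the isomorphism to be natural with respect to all $B$-module maps between free modules, not only those of the form $F_B(f)$. The direct coequalizer argument is the one to keep.
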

\begin{proof}
Write $\D=\B_B$. The regular $A$-module is local, so
Lemma~\ref{lem:res-of-local} shows that $A|_B$ is local in $\D$. Its
canonical central lift makes $A$ a commutative algebra in $\Z(\D)$. By
\cite[Lemma~4.1]{schauenburg2001monoidal} and the standard transitivity of
module structures, we have a tensor equivalence
\[
  \D_A^\sigma\simeq\B_A
\]
under which the free $A$-module functor identifies with $\Ind_B^A$.

(1) Since $B$ is commutative simple, $\D$ is a finite tensor category. Moreover, $A$ is
simple as an algebra in $\D$, since every ideal in $\D$ gives, after
forgetting the $B$-module structure, an ideal of $A$ in $\B$. Therefore
Theorem~\ref{thm:central-simple-module-category} applies and shows that
$\Ind_B^A$ is surjective.

(2) Being a right adjoint to a strong monoidal functor, $\Res_B^A$ is lax
monoidal. Since $\Ind_B^A$ is surjective by part~(1),
Lemma~\ref{lem:tensor-functor-facts}(1)--(2) show that $\Res_B^A$ is faithful
and exact.
\end{proof}

The next lemma rewrites the locality of a restricted right-adjoint module as an equation involving the half-braiding of the central functor.

\begin{lemma}\label{lem:restriction-locality}
Let $F:\B\to\C$ be a central monoidal functor admitting a right
adjoint $R$. Let $A=R(\unit_\C)$, which is commutative by centrality. Let
$\iota:B\hookrightarrow A$ be a subalgebra, and set $q:=\varepsilon_{\unit_\C}\circ F(\iota):F(B)\longrightarrow\unit_\C$. 
Then, for $V\in\C$, the right $B$-module $\Res_B^A(R(V))$ is local if
and only if
\begin{equation}\label{eq:restriction-locality}
  (\id_V\otimes q)\circ\sigma_{B,V}\circ
  (\id_{F(B)}\otimes\varepsilon_V)
  =
  (q\otimes\id_V)\circ
  (\id_{F(B)}\otimes\varepsilon_V).
\end{equation}
If $\id_{F(B)}\otimes\varepsilon_V$ is an epimorphism, then this is
equivalent to
\begin{equation}\label{eq:restriction-locality-epic}
  (\id_V\otimes q)\circ\sigma_{B,V}
  =q\otimes\id_V.
\end{equation}
In particular, the latter conclusion holds if $\varepsilon_V$ is epic and
$F(B)\otimes(-)$ preserves epimorphisms.
\end{lemma}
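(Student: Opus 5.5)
The approach is to translate locality of $\Res_B^A(R(V))$ into an equation of maps $F(B)\otimes F(R(V))\to V$ via the adjunction isomorphism $\zeta_F$, and then to compute both sides using the central structure. Write $M=R(V)$. The $A$-action on $M$ is the lax structure $R(V)\otimes R(\unit_\C)\to R(V\otimes\unit_\C)=R(V)$. Its adjunct under $\zeta_F$ is
\[
F(M)\otimes F(A)\xrightarrow{\varepsilon_V\otimes\varepsilon_{\unit}}V\otimes\unit_\C\cong V.
\]
Restricting along $\iota$, the $B$-action $a^r=a_M^r\circ(\id_M\otimes\iota)$ has adjunct
\[
\zeta_F(a^r)=(\varepsilon_V\otimes q)\circ\phi,
\]
where $\phi:F(M\otimes B)\cong F(M)\otimes F(B)$ is the monoidal structure. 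By definition, $\Res_B^A(M)$ is local iff $a^r=a^r\circ c_{B,M}\circ c_{M,B}$. Since $\zeta_F$ is a bijection, this holds iff the two adjuncts agree.

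The key step is to compute $\zeta_F(a^r\circ c_{B,M}c_{M,B})$. I will use that the central lift $\tF$ is braided, so that
\[
F(c_{X,Y})=\sigma_{X,F(Y)}
\]
up to the monoidal structure isomorphisms, where $\sigma$ is the half-braiding of $F(X)$. This gives
\[
\zeta_F(a^r\circ c_{B,M}c_{M,B})=(\varepsilon_V\otimes q)\circ\sigma_{B,F(M)}\circ\sigma_{M,F(B)}.
\]
Next, I would use naturality of $\sigma_{B,-}$ in its second variable, applied to $\varepsilon_V:F(M)\to V$. Then I would use the fact that $\sigma_{M,-}$ is natural and monoidal. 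Together these move $\varepsilon_V$ past the half-braidings, rewriting the expression as
\[
(\id_V\otimes q)\circ\sigma_{B,V}\circ(\id_{F(B)}\otimes\varepsilon_V)\circ\sigma_{M,F(B)}.
\]
The unbraided side is handled the same way. Writing $\varepsilon_V\otimes q=(q\otimes\id_V)\circ(\id\otimes\varepsilon_V)\circ(\text{swap})$ is not directly available in a non-braided $\C$. Instead, I will precompose both sides with $\sigma_{M,F(B)}^{-1}$, which is invertible. The locality condition then becomes
\[
(\id_V\otimes q)\circ\sigma_{B,V}\circ(\id_{F(B)}\otimes\varepsilon_V)\circ\sigma_{M,F(B)}=(\varepsilon_V\otimes q)\circ\phi.
\]
To match this with the stated form, I must identify $(\varepsilon_V\otimes q)$ with $(q\otimes\id_V)\circ(\id_{F(B)}\otimes\varepsilon_V)\circ\sigma_{M,F(B)}$. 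This follows because $q$ is a map to $\unit_\C$: by naturality of the half-braiding $\sigma_{M,-}$ applied to $q:F(B)\to\unit$, together with $\sigma_{M,\unit}=\id$, we get $(q\otimes\id)\circ\sigma_{M,F(B)}=\id\otimes q$. Cancelling $\sigma_{M,F(B)}$ then yields \eqref{eq:restriction-locality}.

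The main obstacle is bookkeeping: I need to track which variable each half-braiding is natural in. In particular, I must check that the half-braiding $\sigma_{B,V}$ in the statement is the one whose naturality in $V$ lets $\varepsilon_V$ pass through. I would verify this carefully using $\sigma_{X,V}:F(X)\otimes V\to V\otimes F(X)$ as defined in the preliminaries. The final assertions are immediate. If $\id_{F(B)}\otimes\varepsilon_V$ is epic, it can be cancelled on the right, which gives \eqref{eq:restriction-locality-epic}. It is epic whenever $\varepsilon_V$ is epic and $F(B)\otimes(-)$ preserves epimorphisms.
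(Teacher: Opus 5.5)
Your proposal is correct and follows essentially the same route as the paper. You pass to adjuncts using that the counit is monoidal, and use the braided lift to rewrite the adjunct of $a^r\circ c_{B,R(V)}\circ c_{R(V),B}$ as $(\varepsilon_V\otimes q)\circ\sigma_{B,FR(V)}\circ\sigma_{R(V),F(B)}$. You then use naturality of $\sigma_{B,-}$ at $\varepsilon_V$ and of $\sigma_{R(V),-}$ at $q$ to cancel the invertible $\sigma_{R(V),F(B)}$; the paper precomposes with its inverse, which is the same step.

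The only slips are cosmetic: the adjuncts live on $FR(V)\otimes F(B)$ rather than $F(B)\otimes FR(V)$, and the structure isomorphism $\phi$ should appear on both sides of your displayed locality equation.
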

\begin{proof}
The restricted action is
\[
  a_B^r:
  R(V)\otimes B
  \xrightarrow{\id_{R(V)}\otimes\iota}
  R(V)\otimes R(\unit_\C)
  \xrightarrow{R_2(V,\unit_\C)}
  R(V).
\]
Let $F_2(X,Y):F(X)\otimes F(Y)\to F(X\otimes Y)$ denote the
monoidal constraint of $F$. Then, the adjunction gives a bijection
\[
  \begin{aligned}
  \xi:\Hom_\B(R(V)\otimes B,R(V))
  &\longrightarrow
  \Hom_\C(FR(V)\otimes F(B),V),\\
  f&\longmapsto
  \varepsilon_V\circ F(f)\circ F_2(R(V),B).
  \end{aligned}
\]
Since the counit of a monoidal adjunction is monoidal, we have
\begin{equation}\label{eq:restriction-locality-mates}
  \xi(a_B^r)=\varepsilon_V\otimes q.
\end{equation}

The braided lift of $F$ gives $F(c_{X,Y})\circ F_2(X,Y) = F_2(Y,X)\circ\sigma_{X,F(Y)}$. Applying this twice gives the commutative square
\[
\begin{tikzcd}[column sep=huge,row sep=large]
FR(V)\otimes F(B)
  \arrow[rr,"\sigma_{B,FR(V)}\circ\sigma_{R(V),F(B)}"]
  \arrow[d,"{F_2(R(V),B)}"']
& & FR(V)\otimes F(B)
  \arrow[d,"{F_2(R(V),B)}"]
\\
F(R(V)\otimes B)
  \arrow[rr,"{F(c_{B,R(V)}\circ c_{R(V),B})}"']
& & F(R(V)\otimes B).
\end{tikzcd}
\]
Postcomposing the lower-right corner with
$\varepsilon_V\circ F(a_B^r)$ and using
\eqref{eq:restriction-locality-mates} yields
\begin{equation*}
  \xi\bigl(a_B^r\circ c_{B,R(V)}\circ c_{R(V),B}\bigr)
  =
  (\varepsilon_V\otimes q)\circ
  \sigma_{B,FR(V)}\circ\sigma_{R(V),F(B)}.
\end{equation*}

Since $\xi$ is injective, locality of $\Res_B^A(R(V))$ is equivalent to
\begin{equation}\label{eq:restriction-locality-before-naturality}
  \varepsilon_V\otimes q
  =
  (\varepsilon_V\otimes q)\circ
  \sigma_{B,FR(V)}\circ\sigma_{R(V),F(B)}.
\end{equation}
Precomposing with $\sigma_{R(V),F(B)}^{-1}$ and using naturality of the
half-braiding with respect to
$\varepsilon_V:FR(V)\to V$ and $q:F(B)\to\unit_\C$, we obtain
\begin{align*}
  (\varepsilon_V\otimes q)\circ
  \sigma_{R(V),F(B)}^{-1}
  &=(q\otimes\id_V)\circ
    (\id_{F(B)}\otimes\varepsilon_V),\\
  (\varepsilon_V\otimes q)\circ\sigma_{B,FR(V)}
  &=(\id_V\otimes q)\circ\sigma_{B,V}\circ
    (\id_{F(B)}\otimes\varepsilon_V).
\end{align*}
Thus \eqref{eq:restriction-locality-before-naturality} is equivalent to
\eqref{eq:restriction-locality}. If
$\id_{F(B)}\otimes\varepsilon_V$ is epic, it can be cancelled from
\eqref{eq:restriction-locality}, giving
\eqref{eq:restriction-locality-epic}.
\end{proof}

\subsection{Transparent subalgebras and local modules}

Let $i:\C\hookrightarrow\D$ be the inclusion of a tensor subcategory of a
finite tensor category. For $X\in\D$, we write $X\cap\C:= i i^{\ra}(X)$. In particular, if $A$
is an algebra in a braided finite tensor category $\B$, then
\[
  A':=A\cap\B'
\]
is the maximal transparent subalgebra of $A$. We record two results about $A'$
and local modules that will be used in the rest of the paper. These are
\cite[Theorem~1.1(1)--(2)]{shimizu2026transparent}.

\begin{theorem}\label{thm:transparent-local-inputs}
Let $\B$ be a braided finite tensor category and let $A$ be a commutative
simple algebra in $\B$. Set
\[
  K:=(F_A^-)^{\ra}(\unit_{\Z(\B_A)}),
  \qquad
  \D:=\im(F_A^-).
\]
Then the following statements hold.
\begin{enumerate}
  \item There is an isomorphism $K\cong A'$ of subalgebras of $A$, and the
  corestriction of $F_A^-$ induces a braided equivalence
  \[
    \D\simeq\overline{\B_{A'}}.
  \]
  \item The Frobenius--Perron dimension of the local-module category is
  \[
    \FP(\B_A^{\loc})
    =
    \frac{\FP(\B)\FP_{\B}(A')}{\FP_{\B}(A)^2}.
  \]
\end{enumerate}
\end{theorem}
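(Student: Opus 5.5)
The plan is to realize $K$ as a subalgebra of $A$ through the factorization $F_A=\U\circ F_A^-$, where $\U:\Z(\B_A)\to\B_A$ is the forgetful functor. The right adjoint of the free functor $F_A$ is $\Res$, so $F_A^{\ra}(\unit_{\B_A})=A$. Applying Lemma~\ref{lem:alg-map-factorization} with $F=F_A^-$, $G=\U$ and $H=F_A$ gives an algebra map $j:K\to A$.

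For part~(1), I would show three things in turn.
\begin{enumerate}
\item A map $i:C\to A$ factors through $j$ if and only if its adjunct $\zeta_{F_A}(i):C\otimes A\to A$ is a morphism in $\Z(\B_A)$. This is Lemma~\ref{lem:alg-map-factorization}(2), using that $\U$ is faithful, so a lift of $\zeta_{F_A}(i)$ through $\U$ is unique if it exists.
\item Compatibility with the half-braidings $\sigma^-$ on both sides unwinds, by a computation of the same kind as in Lemma~\ref{lem:restriction-locality}, to the condition $c_{M,C}\circ c_{C,M}=\id$ on the image of $i$, for every $M\in\B_A$. Since every $M\in\B$ occurs as a quotient of $M\otimes A$, this says exactly that the image of $i$ lies in $\B'$.
\item Taking $i=j$ itself, the map $j$ is monic. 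Indeed, $j$ is the transpose of the counit of the faithful, exact adjoint pair, and its kernel would be an ideal-type subobject killed by the unit, hence zero.
\end{enumerate}
Together these identify $K$ with the largest subobject of $A$ lying in $\B'$, namely $A'=A\cap\B'$, as a subalgebra.

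For the equivalence in part~(1), I would argue as follows. The corestriction $\B\to\D$ is a surjective tensor functor whose right adjoint sends $\unit$ to $K\cong A'$. Since $F_A^-$ is central with respect to the reverse braiding, the standard reconstruction of a surjective central functor by modules over its induced algebra gives $\D\simeq\B_{A'}$. Because $A'$ is transparent, every $A'$-module is local, so $\B_{A'}=\B_{A'}^{\loc}$ carries the braiding induced from $\oB$. The functor $F_A^-:\oB\to\Z(\B_A)$ is braided, hence the equivalence is braided: $\D\simeq\overline{\B_{A'}}$.

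For part~(2), I would work inside the nondegenerate category $\Z(\B_A)$, using $\FP(\Z(\B_A))=\FP(\B_A)^2=(\FP(\B)/\FP_\B(A))^2$. The key claim is that the Müger centralizer of $\D$ in $\Z(\B_A)$ is identified with $\B_A^{\loc}$. The idea is that objects of $\Z(\B_A)$ centralizing the free modules $X\otimes A$, equipped with half-braiding $\sigma^-$, are exactly the local modules with the other half-braiding $c^A$. This comes from Pareigis' embedding $\B_A\to\Z(\B_A^{\loc};\B_A)$ together with the double-centralizer property. Nondegeneracy then gives $\FP(\D)\FP(\D')=\FP(\Z(\B_A))$. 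By part~(1) and Lemma~\ref{lem:FPdim}, $\FP(\D)=\FP(\B)/\FP_\B(A')$. Solving yields
\[
\FP(\B_A^{\loc})=\frac{\FP(\B)\FP_\B(A')}{\FP_\B(A)^2}.
\]
I expect the main obstacle to be the identification $\D'\simeq\B_A^{\loc}$, and in particular that every object of $\D'$ arises from a local module. If this identification fails, the fallback is to compute $\FP(\B_A^{\loc})$ directly as the category of modules over $A$ in $\Z(\B_A)$ relative to $\D$.
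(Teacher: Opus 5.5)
The paper gives no proof of this statement; it is quoted from \cite[Theorem~1.1(1)--(2)]{shimizu2026transparent}. Judged on its own, your outline is viable. Part~(1) uses the same mechanism the paper later uses in Lemma~\ref{lem:alpha-basic-properties} and Theorem~\ref{thm:general-galois-connection}: Lemma~\ref{lem:alg-map-factorization} with $(F,G,H)=(F_A^-,\U,F_A)$, plus a half-braiding computation. Two local repairs are needed there.

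First, your reason for $j$ being monic does not hold up. Knowing that the unit of $K$ survives under $j$ only shows that $\Ker(j)$ is a proper ideal of $K$. That forces nothing unless $K$ is already known to be simple. Instead, note that $j=(F_A^-)^{\ra}(u)$, where $u\colon\unit\to\I(A)$ is a nonzero map out of a simple object and hence monic, and $(F_A^-)^{\ra}$ is left exact. Alternatively, Lemma~\ref{lem:alg-map-factorization}(1) and faithfulness of $\U$ make $f\mapsto j\circ f$ injective.

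Second, compatibility with $\sigma^-$ does not unwind to $c_{M,C}c_{C,M}=\id$. It unwinds to $a^r_M(\id_M\otimes i)\,c_{C,M}c_{M,C}=a^r_M(\id_M\otimes i)$, i.e.\ locality of $M$ restricted along $i$. To extract transparency, take $M=X\otimes A$ and precompose with $\id_X\otimes u_A\otimes\id_C$. This yields $(\id_X\otimes i)\,c_{C,X}c_{X,C}=\id_X\otimes i$, and factoring $i$ through its image gives $\im(i)\in\B'$. What is used is that $X$ embeds in $X\otimes A$ via the unit, not that it is a quotient of it.

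The genuine gap is in part~(2). Identifying the M\"uger centralizer $\D'$ in $\Z(\B_A)$ with $\B_A^{\loc}$ is the whole content of that part, and the double-centralizer property does not supply it. The relation $\D''=\D$ merely converts the problem into showing $(\B_A^{\loc})'\subseteq\D$. Any dimension count for that is circular, because $\FP(\B_A^{\loc})$ is the unknown.

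A direct argument closes the gap.
\begin{itemize}
\item If $N$ is local, then $c_{N,M}$ descends to a map $N\otimes_A M\to M\otimes_A N$ for every $M\in\B_A$; this uses $a^r_N=a^r_Nc_{A,N}c_{N,A}$ and the hexagon axioms. Write $\tau^N$ for the resulting half-braiding. On free modules it equals $(\sigma^-_{X,N})^{-1}$, so $(N,\tau^N)$ centralizes $\D$.
\item Conversely, suppose $(N,\tau)$ centralizes $\D$. Then $\tau_{X\otimes A}=(\sigma^-_{X,N})^{-1}$ is forced.
\item These values determine $\tau$: each $M\in\B_A$ is a quotient of $M\otimes A$ via its action, and $N\otimes_A(-)$ preserves epimorphisms.
\item Naturality of $\tau$ with respect to the module map $m_A\colon F_A(A)\to F_A(\unit)$ reads $a^r_N\,c_{A,N}c_{N,A}=a^r_N$. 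This is exactly locality of $N$.
\end{itemize}
Hence $N\mapsto(N,\tau^N)$ is a fully faithful tensor functor $\B_A^{\loc}\to\Z(\B_A)$ with image $\D'$. Your count $\FP(\D)\FP(\D')=\FP(\B_A)^2$ then gives the formula. Here $\FP(\D)=\FP(\B)/\FP_\B(A')$ follows from \cite[Lemma~6.2.4]{etingof2016tensor} applied to the corestriction $\B\to\D$.
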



\section{Left-modular elements of the lattice of tensor subcategories}\label{sec:modular}
Let $\C$ be a finite tensor category. Given tensor subcategories $\D_1,\D_2\subseteq \C$, we can construct two new tensor subcategories:
\begin{itemize}
  \item $\D_1\cap \D_2$: the intersection is defined to be the full subcategory of $\C$ consisting of objects that belong to both $\D_1$ and $\D_2$.
  \item $\langle \D_1,\D_2 \rangle_\otimes$: this is the smallest tensor subcategory of $\C$ containing the image of the functor $\D_1\boxtimes \D_2 \to\C$.
\end{itemize} 
Together these operations turn the set of tensor subcategories of $\C$ into a lattice with $\D_1\vee \D_2:= \langle \D_1,\D_2 \rangle_\otimes$ and $\D_1\wedge\D_2:= \D_1\cap \D_2$. We denote this lattice by $\Lten{\C}$.

A lattice is called \emph{modular} if $x\leq z$ implies $x\vee (y\wedge z) = (x\vee y)\wedge z$ for all $x,y,z$ in the lattice. The lattice $\Lten{\C}$ need not be modular in general: for example, the lattice $\Lten{\Vect_{D_8}}$ identifies with the subgroup lattice of the dihedral group $D_8$ and this lattice is not modular. 
We therefore consider the following notion.

\begin{definition}\cite{blass1997mobius}
  An element $a$ of a lattice $L$ is called \emph{left-modular} if for all $x,y \in L$ with $x\leq y$, we have $x\vee (a\wedge y) = (x\vee a)\wedge y$.
\end{definition}

In this section, we prove that central tensor subcategories are left-modular elements of the lattice of tensor subcategories of a finite tensor category $\C$. 

Let $\C$ be a finite tensor category and $\B$ a braided finite tensor category. We call $\B$ a central tensor subcategory of $\C$ if we have an injective tensor functor $F:\B\to \C$ which is central. Then $F$ admits a lift $\tF:\B\to \Z(\C)$ such that $F = \U\circ \tF$. We identify $\B$ with the tensor subcategory $F(\B)\subseteq\C$.

\begin{theorem}\label{thm:central-subcategory-left-modular}
For any tensor subcategories $\D_1,\D_2\subseteq \C$ with $\D_1\subseteq \D_2$ and $\B$ a central tensor subcategory of $\C$, we have
\begin{equation*}
  \D_1\vee (\B\wedge \D_2) = (\D_1\vee \B)\wedge \D_2.
\end{equation*}
\end{theorem}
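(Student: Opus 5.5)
The inclusion $\D_1\vee(\B\wedge\D_2)\subseteq(\D_1\vee\B)\wedge\D_2$ holds in any lattice: $\D_1\subseteq\D_2$ and $\B\wedge\D_2\subseteq\D_2$ give containment in $\D_2$, and both pieces lie in $\D_1\vee\B$. The content is the reverse inclusion. The plan is to use centrality to describe $\D_1\vee\B$ concretely, and then show that any object of $\D_2$ in that description already lies in $\D_1\vee(\B\wedge\D_2)$.

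\textbf{Step 1: describe the join.} Let $\mathcal{S}$ be the full subcategory of $\C$ consisting of subquotients of objects $X\otimes F(Y)$ with $X\in\D_1$ and $Y\in\B$. I would show $\mathcal{S}$ is a tensor subcategory, so that $\mathcal{S}=\D_1\vee\B$.
\begin{itemize}
\item Closure under tensor products: the half-braiding $\sigma_{Y,X'}$ gives
\[
X\otimes F(Y)\otimes X'\otimes F(Y')\cong X\otimes X'\otimes F(Y\otimes Y').
\]
Subquotients are handled by exactness of $\otimes$.
\item Closure under duals: $(X\otimes F(Y))^*\cong F(Y)^*\otimes X^*$, and the half-braiding turns this back into the form $X^*\otimes F(Y^*)$.
\end{itemize}
Equivalently, $\D_1\vee\B=\im(T)$, where $T\colon\D_1\boxtimes\B\to\C$, $X\boxtimes Y\mapsto X\otimes F(Y)$. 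Centrality makes $T$ a tensor functor from $\D_1\boxtimes\B$ with its product tensor structure.

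\textbf{Step 2: apply the same construction to $\D_2$.} Restrict everything to $\D_2$ and to $\B_0:=\B\wedge\D_2$, which is a central subcategory of $\D_2$ since half-braidings restrict. Consider the tensor functor
\[
T_0\colon\D_1\boxtimes\B_0\to\D_2,\qquad \im(T_0)=\D_1\vee\B_0 .
\]
The goal is $\im(T)\cap\D_2\subseteq\im(T_0)$. I would compare Frobenius--Perron dimensions, aiming to show
\[
\FP(\im(T)\cap\D_2)\le\FP(\im(T_0)).
\]
An inclusion of tensor subcategories with equal FP-dimension is an equality, by \cite[Proposition~6.3.4]{etingof2016tensor}, as in Lemma~\ref{lem:tensor-functor-facts}(3). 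To compute these dimensions, I would use the formula
\[
\FP(\im T)=\FP(\D_1)\FP(\B)/\FP_{\D_1\boxtimes\B}\bigl(T^{\ra}(\unit)\cap\ldots\bigr).
\]
More concretely, I would show that the kernel data of $T$ is governed by $\D_1\cap\B$: the object $T^{\ra}(\unit)$ restricted appropriately should be $\bigoplus_Z Z^*\boxtimes Z$ over $Z\in\D_1\cap\B$. This gives
\[
\FP(\D_1\vee\B)=\frac{\FP(\D_1)\FP(\B)}{\FP(\D_1\cap\B)} .
\]

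\textbf{Step 3: conclude.} Given that formula, modularity reduces to the standard counting argument for a product formula. Call the top and bottom sides $L$ and $R$, and note:
\begin{itemize}
\item $\D_1\cap\B_0=\D_1\cap\B$, since $\D_1\subseteq\D_2$;
\item $\FP(L)=\FP(\D_1)\FP(\B_0)/\FP(\D_1\cap\B)$.
\end{itemize}
To bound $\FP(R)$, I would apply the formula again inside $\D_1\vee\B$ to the pair $R$ and $\B$. Their join contains $\D_1\vee\B$, and their meet is $\B\cap\D_2=\B_0$. This gives
\[
\FP(R)\,\FP(\B)/\FP(\B_0)\le\FP(\D_1\vee\B)=\FP(\D_1)\FP(\B)/\FP(\D_1\cap\B),
\]
so $\FP(R)\le\FP(L)$. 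Together with $L\subseteq R$ and Proposition~6.3.4, this forces $L=R$. This step also uses that $\B$ is central in $R$, which holds by restricting half-braidings.

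\textbf{Main obstacle.} The hard step is the dimension formula
\[
\FP(\D\vee\B)=\FP(\D)\FP(\B)/\FP(\D\cap\B)
\]
for central $\B$ in the non-semisimple setting. Concretely, one must identify $T^{\ra}(\unit)$, or at least its FP-dimension, in terms of $\D\cap\B$. I would first try to realize $\D\vee\B$ as a category of modules: $\D\cap\B$ is a central, hence braided, tensor category. Then $\D\boxtimes\B\to\C$ should factor through the category of modules over a canonical commutative algebra $L=\I_{\D\cap\B}(\unit)$ placed diagonally, namely the coend $\int^{Z} Z^*\boxtimes Z$. The remaining task is to prove that the induced functor from modules over $L$ is injective, using full faithfulness of $F$ together with the fact that $F$ lands in $\C$. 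Lemma~\ref{lem:FPdim} then supplies the dimension count, provided $T^{\ra}(\unit)$ lies in the image of the relevant injective functor.
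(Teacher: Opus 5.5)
Your proposal is correct and follows essentially the paper's proof: the trivial inclusion, then a Frobenius--Perron count with the product formula $\FP(\X\vee\B)\FP(\X\wedge\B)=\FP(\X)\FP(\B)$, applied to $\X=\D_1$, to $\X=R$, and to $\B\wedge\D_2$ in place of $\B$. In the $\X=R$ case, $R\vee\B=\D_1\vee\B$, so your inequality is in fact an equality; note that your $\le$ comes from the inclusion $R\vee\B\subseteq\D_1\vee\B$, not from the reverse containment you wrote. The formula you flag as the main obstacle is exactly \eqref{eq:fp-central-subcat}, which the paper does not re-derive but quotes from \cite[Lemma~4.8]{shimizu2019non}; the only input it needs is that centrality makes $\B\boxtimes\X\to\C$ a tensor functor, which is your Step~1.
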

In the proof below, we will use the fact that for any $\mathcal{X}\subseteq \C$, the following functor:
\[ \B \boxtimes \mathcal{X} \to \C, \qquad A \boxtimes X \to A\otimes X\]
is a tensor functor. This is because $\B$ is a central tensor subcategory of $\C$. Consequently, by \cite[Lemma~4.8]{shimizu2019non},
\begin{equation}\label{eq:fp-central-subcat}
  \FP(\B \wedge\X) \FP(\B \vee \X) = \FP(\B)\FP(\X) \quad \text{for any } \X\subseteq \C.
\end{equation}
\begin{proof}
Set
\[ \E = (\D_1 \vee \B)\wedge \D_2 \quad \text{and} \quad \F = \D_1 \vee (\B \wedge \D_2) .\]
As $\D_1\subseteq \D_1\vee\B$ and $\D_1\subseteq \D_2$, we have that $\D_1\subseteq \E$. Similarly, $\B\wedge \D_2 \subseteq (\D_1\vee \B)\wedge \D_2 = \E$. Thus, $\F\subseteq \E$. Therefore, it suffices to show that $\FP(\F)=\FP(\E)$. 

\underline{Calculation of $\FP(\E)$}: We make two observations. 
\begin{itemize}
  \item $\B \wedge \E = \B \wedge\D_2$: As $\E\subseteq \D_2$, the containment $\subseteq$ follows. On the other hand, we already noted that $\B\wedge \D_2 \subseteq \E$ and moreover $\B\wedge \D_2 \subseteq \B$, so $\B\wedge \D_2\subseteq \B\wedge\E$. 
  \item $\B\vee \E = \B \vee \D_1$: As $\E\supseteq \D_1$, the containment $\supseteq$ follows. On the other hand, both $\B$ and $\E$ are subcategories of $\B\vee \D_1$, so the containment $\subseteq$ follows.
\end{itemize} 
Now, we can use \eqref{eq:fp-central-subcat} with $\X=\E$ along with the above two observations to get
\begin{equation*}
   \FP(\B\wedge \D_2) \FP(\B \vee \D_1) = \FP(\E) \FP(\B).
\end{equation*}
Using \eqref{eq:fp-central-subcat}, now with $\X=\D_1$, we get that
\begin{equation*}
  \FP(\B\wedge\D_1)\FP(\B \vee \D_1) = \FP(\B) \FP(\D_1).
\end{equation*}
  Combining the above observations yields:
\begin{equation}\label{eq:FP-E}
  \FP(\E) = \frac{\FP(\B \wedge \D_2) \FP(\D_1)}{\FP(\B \wedge \D_1)}.
\end{equation}

\underline{Calculation of $\FP(\F)$}:  As $\B\subseteq \C$ is central, so is $\B\wedge \D_2\subseteq \D_2$. Thus, we can use \eqref{eq:fp-central-subcat} with $\X=\D_1$ and $\B$ replaced by $\B\wedge \D_2$ to get
\begin{equation}\label{eq:FP-F}
  \FP(\D_1) \FP(\B\wedge \D_2)  = \FP(\D_1\vee (\B\wedge \D_2)) \FP(\D_1\wedge(\B \wedge \D_2)) = \FP(\F) \FP(\B\wedge\D_1).
\end{equation}
From \eqref{eq:FP-E} and \eqref{eq:FP-F}, it follows that $\FP(\E) = \FP(\F)$. As $\F\subseteq \E$, we conclude that $\F=\E$.
\end{proof}

\begin{corollary}\label{cor:braided-subcategory-lattice-modular}
If $\C$ is a braided finite tensor category, then $\Lten{\C}$ is modular.
\end{corollary}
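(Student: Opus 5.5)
The corollary should follow directly from Theorem~\ref{thm:central-subcategory-left-modular}: I will show that every element of $\Lten{\C}$ is left-modular. A lattice in which every element $a$ is left-modular is modular, because the left-modular identity $x\vee(a\wedge y)=(x\vee a)\wedge y$ for $x\le y$ is exactly the modular law. So it suffices to check that every tensor subcategory $\B\subseteq\C$ is a central tensor subcategory in the sense fixed before Theorem~\ref{thm:central-subcategory-left-modular}.

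Let $\B\subseteq\C$ be a tensor subcategory of the braided finite tensor category $\C$, with braiding $c$. The braiding of $\C$ restricts to $\B$, so $\B$ is itself a braided finite tensor category. The inclusion $F:\B\hookrightarrow\C$ is an injective tensor functor. For centrality, I define the lift
\[
\tF:\B\to\Z(\C),\qquad X\mapsto (X,\,c_{X,-}),
\]
where the half-braiding of $X$ with $V\in\C$ is $c_{X,V}:X\otimes V\to V\otimes X$.

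Several checks are needed for $\tF$, all of them routine consequences of the axioms of a braided category:
\begin{itemize}
\item the hexagon axiom for $c$ says that $c_{X,-}$ is a half-braiding;
\item naturality of $c$ in its first variable says that morphisms of $\B$ are morphisms in $\Z(\C)$;
\item the other hexagon axiom says that the identity maps $X\otimes Y\to X\otimes Y$ give a monoidal structure on $\tF$, since $c_{X\otimes Y,-}$ is the tensor product of the half-braidings;
\item $\tF$ is braided, because the braiding in $\Z(\C)$ between $(X,c_{X,-})$ and $(Y,c_{Y,-})$ is $c_{X,Y}$, which is the braiding of $\B$.
\end{itemize}
Clearly $F=\U\circ\tF$. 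Hence $\B$ is a central tensor subcategory of $\C$.

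Now take arbitrary $\D_1\subseteq\D_2$ and any $\B$ in $\Lten{\C}$. Theorem~\ref{thm:central-subcategory-left-modular} gives $\D_1\vee(\B\wedge\D_2)=(\D_1\vee\B)\wedge\D_2$, which is the modular law. There is no real obstacle here; the only care needed is matching the braided lift to the convention $\tF:\B\to\Z(\C)$ braided, which holds with the choice $c_{X,-}$ rather than the inverse braiding.
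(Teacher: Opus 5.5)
Your proposal is correct and matches the paper's proof: you show every tensor subcategory $\B\subseteq\C$ is central via the lift $X\mapsto (X,c_{X,-})$ into $\Z(\C)$, then apply Theorem~\ref{thm:central-subcategory-left-modular} to conclude that every element of $\Lten{\C}$ is left-modular, which is exactly the modular law. Your explicit check that this lift is braided (rather than using the inverse braiding) is a correct detail that the paper leaves implicit.
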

\begin{proof}
For every tensor subcategory $\B\subseteq\C$, the inclusion is central using
the canonical lift
\[
  \B\longrightarrow \C \longrightarrow \Z(\C),
  \qquad
  X\longmapsto (X,c_{X,-}).
\]
By Theorem~\ref{thm:central-subcategory-left-modular}, every
element of $\Lten{\C}$ is left-modular. Hence $\Lten{\C}$ is modular.
\end{proof}
In the fusion case, this result is \cite[Lemma~5.6]{drinfeld2010braided}.

\begin{remark}
For tensor subcategories $\D,\E\subseteq\C$, the braiding makes the
multiplication functor $\D\boxtimes\E\to\C$ a tensor functor. Using
\eqref{eq:fp-central-subcat}, it follows that the function $v(\D):=\log\FP(\D)$ satisfies
\[
  v(\D\vee\E)+v(\D\cap\E)
  =
  v(\D)+v(\E).
\]
Thus, $v$ is a strictly increasing valuation on $\Lten{\C}$. It is not in
general the rank function of the lattice, since its increments along covers
need not be equal to one.  
\end{remark}

\begin{remark}
There is also a natural order-reversing operation on $\Lten{\C}$. Let $\C'$ be
the M\"uger center of $\C$. For every tensor subcategory $\D\subseteq\C$,
\cite[Theorem~4.9]{shimizu2019non} gives
\[
  \D''=\D\vee\C'.
\]
It follows that taking the M\"uger centralizer in $\C$ is an order-reversing involution on the interval $[\C',\C]_{\otimes}$. 
If $\C$ is nondegenerate, then $\C'=\Vect$, and hence taking the M\"uger centralizer in $\C$ is an order-reversing involution on the whole lattice $\Lten{\C}$.  
\end{remark}


\section{The Galois correspondence}\label{sec:galois-connection}
Let $\B$ be a braided finite tensor category and let $A$ be a commutative
simple algebra in $\B$. Consider the following lattices:
\begin{align*}
    \Lalg{A}
  = \{\text{algebras }B\mid B\xhookrightarrow{i_B}A\}, \qquad \Lten{\B_A}
  = \{\text{tensor subcategories }\E\mid
      \E\xhookrightarrow{\iota_\E}\B_A\}.
\end{align*}
The two lattices are ordered by inclusion. 
Thus $\E_1\leq\E_2$ if the inclusion of $\E_1$ into $\B_A$ factors
through $\E_2$, and $B_1\leq B_2$ if the inclusion of $B_1$ into $A$ factors
through $B_2$. In this section, we construct order-reversing maps
$\alpha:\Lten{\B_A}\to\Lalg{A}$ and
$\beta:\Lalg{A}\to\Lten{\B_A}$, in other words, they form a Galois connection.  Here, $\alpha$ is constructed using
relative Drinfeld centers and $\beta$ using induction from
local modules. 

Set $A'=A\cap \B'$. Consider the following intervals:
\begin{equation*}
[A',A]_{\mathrm{alg}}
  = \{B\in\Lalg{A}\mid A'\hookrightarrow B\}, \qquad 
[\B_A^\loc,\B_A]_{\otimes}
  = \{\E\in\Lten{\B_A}\mid \B_A^\loc\subseteq\E\}.
\end{equation*} 
We prove formulae for the Frobenius-Perron dimension of $\alpha(\E)$ 
and $\beta(B)$. Using these, we 
show that the Galois connection $(\alpha,\beta)$ restricts to a Galois correspondence between the intervals $[A',A]_{\mathrm{alg}}$ and $[\B_A^\loc,\B_A]_{\otimes}$.


\subsection{The construction \texorpdfstring{$\alpha$}{alpha}} \label{subsec:alpha}
We begin with the map from tensor subcategories to subalgebras. 
Let $\E \in \Lten{\B_A}$. Define $\Q_\E$ as the composition
\begin{equation*}
  \Q_\E: \B \xrightarrow{\tF_A} \Z(\B_A)=\Z(\B_A;\B_A) \xrightarrow{\R_{\E}} \Z(\E;\B_A).
\end{equation*}
When clear from context, we write $\Q=\Q_\E$. We define
\begin{equation*}
  \alpha(\E) = \Q^{\ra}(\unit_{\Z(\E;\B_A)}).
\end{equation*} 
As $\Q_\E^\ra$ is lax-monoidal, it follows that $\alpha(\E)$ is an algebra in $\B$. The following lemma proves that $\alpha$ is well-defined.

\begin{lemma}\label{lem:alpha-basic-properties}
\begin{enumerate}
  \item $\alpha(\E)$ is canonically a subalgebra of $A$.
  \item $\alpha$ is order reversing.
  \item $\alpha(\B_A) = A' = A\cap\B'$ as subalgebras of $A$.
  Consequently, $A'\subseteq\alpha(\E)$ for every
  $\E\in\Lten{\B_A}$.
  \item $\alpha(\B_A^\loc) = A$, and hence
  $\alpha(\Vect) = A$.
\end{enumerate}
\end{lemma}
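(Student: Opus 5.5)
The plan is to route all four parts through the factorization Lemma~\ref{lem:alg-map-factorization}, applied to composites of surjective or forgetful tensor functors out of $\Z(\B_A)$.

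\emph{Part (1).} Consider the triangle
\[
  \B\xrightarrow{\Q_\E}\Z(\E;\B_A)\xrightarrow{\U_\E}\B_A,
  \qquad
  \U_\E\circ\Q_\E=\U\circ\tF_A=F_A .
\]
Here $F_A=-\otimes A$ is the free module functor, whose right adjoint is the forgetful functor, so $F_A^{\ra}(\unit_{\B_A})=A$. Apply the setup preceding Lemma~\ref{lem:alg-map-factorization} with $F=\Q_\E$, $G=\U_\E$ and $H=F_A$. This produces an algebra map
\[
  j_\E=\Q_\E^{\ra}(u):\alpha(\E)\to A,
  \qquad
  u=\ozeta_{\U_\E}(\iota)\colon\unit\to\I_\E(\unit).
\]
I then need $j_\E$ to be monic. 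Since $\U_\E$ is a surjective tensor functor, $\I_\E$ is faithful and exact by Lemma~\ref{lem:tensor-functor-facts}. The unit $u$ is monic because $\unit$ is simple and $u\neq0$. Finally, $\Q_\E^{\ra}$ is a right adjoint and so preserves monomorphisms. Together these make $j_\E$ a subalgebra inclusion.

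\emph{Part (2).} Suppose $\E_1\subseteq\E_2$. Restricting half-braidings gives a tensor functor
\[
  \R\colon\Z(\E_2;\B_A)\to\Z(\E_1;\B_A),
  \qquad
  \Q_{\E_1}=\R\circ\Q_{\E_2},\quad \U_{\E_1}\circ\R=\U_{\E_2}.
\]
Applying the same construction to $F=\Q_{\E_2}$ and $G=\R$ yields an algebra map $\alpha(\E_1)\to\alpha(\E_2)$. I still have to check that this map is compatible with the inclusions into $A$. That compatibility follows from Lemma~\ref{lem:alg-map-factorization}(1), using the factorization $\U_{\E_1}\circ\R=\U_{\E_2}$ and the fact that composites of mates are mates. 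Since the inclusion $\alpha(\E_1)\hookrightarrow A$ is monic, the map $\alpha(\E_1)\to\alpha(\E_2)$ is monic as well.

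\emph{Part (3).} This is immediate from the definitions. For $\E=\B_A$ we have $\Q=\tF_A=F_A^-$, so
\[
  \alpha(\B_A)=(F_A^-)^{\ra}(\unit)=K\cong A'
\]
by Theorem~\ref{thm:transparent-local-inputs}(1). I still need to confirm that this isomorphism is one of subalgebras of $A$, i.e.\ that the embedding constructed in (1) agrees with the one used there; this holds because both embeddings come from the same mate construction. The consequence $A'\subseteq\alpha(\E)$ then follows from (2), since $\E\subseteq\B_A$.

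\emph{Part (4).} This is the main obstacle. The goal is to show that $j:\alpha(\B_A^\loc)\to A$ is an isomorphism, and by Lemma~\ref{lem:alg-map-factorization}(2) it suffices to factor $\id_A$ through $j$. Concretely, I must find $\varphi:\Q(A)\to\unit$ in $\Z(\B_A^\loc;\B_A)$ with $\U(\varphi)=\zeta_{F_A}(\id_A)=m_A:A\otimes A\to A$. In other words, I need to show that the multiplication $m_A$ commutes with the half-braidings against every local module $N$. On $A\otimes A$ the half-braiding is the inverse-braiding structure $\sigma^-$, restricted to $\B_A^\loc$. On the unit $A$ it is the trivial one.

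The compatibility reduces to a diagram chase. One uses Pareigis' result that, for $N$ local, $c_{N,M}$ and $c_{M,N}^{-1}$ induce the same map on relative tensor products over $A$; this is exactly the locality condition $a^r_N\circ c_{A,N}\circ c_{N,A}=a^r_N$. One also uses that $A$ is commutative. Alternatively, Lemma~\ref{lem:restriction-locality} can be applied with $B=A$ and $F=\Q$: locality of $\Res_A^A(R(V))$ becomes condition~\eqref{eq:restriction-locality-epic} on $q=m_A$, and this condition is precisely compatibility of $q$ with the half-braidings. I would try the direct chase first.

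Finally, $\alpha(\Vect)=A$ follows from (2), since $\Vect\subseteq\B_A^\loc$ gives $A=\alpha(\B_A^\loc)\subseteq\alpha(\Vect)\subseteq A$.
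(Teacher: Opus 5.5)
Your proposal follows the paper's proof in all four parts. You use the same monic algebra map $\Q_\E^{\ra}(u)$ in (1), restriction of half-braidings along $\E_1\subseteq\E_2$ in (2), and Theorem~\ref{thm:transparent-local-inputs}(1) in (3). In (4) you factor $\id_A$ through the inclusion via Lemma~\ref{lem:alg-map-factorization} by showing that $m_A$ intertwines the half-braidings with local modules. The paper cites \cite[Lemma~2.2]{shimizu2026transparent} for that last check, whereas you do the chase directly. Your chase works: after the identifications $(A\otimes A)\otimes_A N\cong A\otimes N$ and $N\otimes_A(A\otimes A)\cong N\otimes A$, the required identity becomes $a^r_N\circ c_{N,A}^{-1}=a^r_N\circ c_{A,N}$, which is exactly locality of $N$.

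\textbf{Reversed arrow in (2).} As written, part (2) proves the wrong inequality. With $F=\Q_{\E_2}$, $G=\R$, $H=\Q_{\E_1}$, the construction before Lemma~\ref{lem:alg-map-factorization} produces
$j=\Q_{\E_2}^{\ra}(u)\colon F^{\ra}(\unit)=\alpha(\E_2)\to H^{\ra}(\unit)=\alpha(\E_1)$, not a map $\alpha(\E_1)\to\alpha(\E_2)$. Your version would give $\alpha(\E_1)\subseteq\alpha(\E_2)$, i.e.\ that $\alpha$ is order preserving. Once the arrow is corrected, the compatibility reads $i_{\alpha(\E_1)}\circ j=i_{\alpha(\E_2)}$, and $j$ is monic because $i_{\alpha(\E_2)}$ is. The paper gets monicity more directly: $\unit\to\R^{\ra}(\unit)$ is a nonzero map out of a simple object, and $\Q_{\E_2}^{\ra}$ is left exact.

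\textbf{Two smaller points in (4).} First, you attribute to Pareigis the claim that $c_{N,M}$ and $c_{M,N}^{-1}$ induce the same map for every $M$ once $N$ is local. This is false when $M$ is also local, since the two differ by the monodromy in $\B_A^{\loc}$. Only the instance $M=A$ is needed, and that instance is just locality. Second, your alternative route via Lemma~\ref{lem:restriction-locality} should take $F=F_A\colon\B\to\B_A$, not $F=\Q$. Use its lift $\tF_A=F_A^-$, viewed as a central functor out of $\oB$; locality is the same for $\oB$ and $\B$. Then $R(\unit)=A$, $B=A$ and $q=m_A$, as required.
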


\begin{proof}
(1) Let $\U_\E$ and $\I_\E$ be the forgetful functor and its right adjoint
from \S\ref{ssec:relative-drinfeld-centers}, with
$\C=\B_A$ and $\D=\E$. By construction,
\[
  \U_\E\circ\Q_\E=F_A.
\]
By \S\ref{ssec:alg-mon-adj}, the adjunct of $\id_A$ under
$\U_\E\dashv\I_\E$ is a morphism of algebras
\[
  \unit_{\Z(\E;\B_A)}\longrightarrow \I_\E(A).
\]
It is non-zero and hence monic, since the tensor unit is simple. Applying the
left exact lax monoidal functor
$\Q_\E^\ra$ therefore gives a monomorphism of algebras
\[
  \alpha(\E)=\Q_\E^\ra(\unit_{\Z(\E;\B_A)})
  \hookrightarrow
  \Q_\E^\ra\I_\E(A)
  \cong F_A^\ra(A)
  \cong A.
\]
We denote this canonical inclusion by
$i_{\alpha(\E)}:\alpha(\E)\hookrightarrow A$.

(2) Suppose that $\E_1\leq \E_2$ via $F:\E_1\hookrightarrow\E_2$. Let
\[
  \R_F:\Z(\E_2;\B_A)\to\Z(\E_1;\B_A)
\]
be restriction of half-braidings, as in \eqref{eq:ZC-to-ZDC}. Then
$\Q_{\E_1}=\R_F\circ\Q_{\E_2}$. As in part~(1), the unit of
$\R_F\dashv \R_F^\ra$ at the tensor unit is a monomorphism of algebras
\[
  \unit_{\Z(\E_2;\B_A)}
  \hookrightarrow
  \R_F^\ra\bigl(\unit_{\Z(\E_1;\B_A)}\bigr).
\]
Applying $\Q_{\E_2}^\ra$ gives a monic algebra map
\[
  \rho_{\E_2,\E_1}:\alpha(\E_2)
  \hookrightarrow
  \Q_{\E_2}^\ra \R_F^\ra
  \bigl(\unit_{\Z(\E_1;\B_A)}\bigr)
  \cong\alpha(\E_1).
\]
Since $\U_{\E_1}\circ \R_F=\U_{\E_2}$, functoriality of the units for
composite adjunctions gives $i_{\alpha(\E_1)}\circ\rho_{\E_2,\E_1} = i_{\alpha(\E_2)}$. 
This equality is precisely the compatibility required by the order on
$\Lalg{A}$.
Hence $\alpha(\E_2)\leq\alpha(\E_1)$, so $\alpha$ is order reversing.

(3) For $\E=\B_A$, the functor $\R_\E$ is the identity and
$\Q_\E=F_A^-$. Hence Theorem~\ref{thm:transparent-local-inputs}(1) gives
\[
  \alpha(\B_A)
  =(F_A^-)^\ra(\unit_{\Z(\B_A)})
  \cong A\cap\B'. 
\]
For every $\E\in\Lten{\B_A}$, the inclusion $\E\subseteq\B_A$ and part~(2)
give $A'=\alpha(\B_A)\subseteq\alpha(\E)$.

(4) The algebra map $\id_A:A\to A$ corresponds under the free-module adjunction
to
\[
  q_A=m_A:F_A(A)=A\otimes A\longrightarrow A.
\]
Apply \cite[Lemma~2.2]{shimizu2026transparent} with $L=A$. Every
$M\in\B_A^\loc$ is local as an $A$-module, so the lemma shows that $q_A$
preserves the inverse-braiding half-braiding with every object of
$\B_A^\loc$. Thus $q_A$ is a morphism in $\Z(\B_A^\loc;\B_A)$ from
$\Q_{\B_A^\loc}(A)$ to the tensor unit.
Lemma~\ref{lem:alg-map-factorization} therefore shows that $\id_A$ factors
through the canonical inclusion
\[
  i_{\alpha(\B_A^\loc)}:\alpha(\B_A^\loc)\hookrightarrow A.
\]
This inclusion is thus an isomorphism.
Since $\Vect\subseteq\B_A^\loc$, part~(2) gives
$A=\alpha(\B_A^\loc)\subseteq\alpha(\Vect)\subseteq A$.
\end{proof}

We now restrict to $\E\in[\B_A^\loc,\B_A]_{\otimes}$ and give a formula for the Frobenius--Perron dimension of $\alpha(\E)$. First we need a technical lemma. Set
\[
  \D:=\im(F_A^-)=\im(\tF_A)\subseteq\Z(\B_A).
\]
We factor $\tF_A$ through its image as $\B \xrightarrow{h} \D\xrightarrow{i} \Z(\B_A)$,  where $h$ is surjective and $i$ is the inclusion. For
$\E\in[\B_A^\loc,\B_A]_{\otimes}$, set
$J_{\E} := \R_{\E}^{\ra}(\unit_{\Z(\E;\B_A)})$. 
The next lemma proves $J_{\E}\in\D$. This is precisely the hypothesis needed
to apply Lemma~\ref{lem:FPdim} to the composite
\[
  \D\xrightarrow{i}\Z(\B_A)
  \xrightarrow{\R_{\E}}\Z(\E;\B_A).
\]

\begin{lemma}\label{lem:restriction-adjoint-in-minus-image}
For every $\E\in[\B_A^\loc,\B_A]_{\otimes}$, the object $J_{\E}$ belongs
to $\D$.
\end{lemma}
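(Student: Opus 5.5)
The plan is to reduce to the smallest case $\E=\B_A^\loc$, and then to identify $\D$ as a Müger centralizer inside $\Z(\B_A)$.

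\emph{Reduction.} Let $\E\in[\B_A^\loc,\B_A]_\otimes$ and write $F:\B_A^\loc\hookrightarrow\E$ for the inclusion. Then $\R_{\B_A^\loc}=\R_F\circ\R_\E$. The unit of $\R_F\dashv\R_F^\ra$ at the unit object is a nonzero map out of the simple unit, hence monic, exactly as in the proof of Lemma~\ref{lem:alpha-basic-properties}(2). Applying the left exact functor $\R_\E^\ra$ gives a monomorphism
\[
  J_\E\hookrightarrow J_{\B_A^\loc}.
\]
The subcategory $\D=\im(\tF_A)$ is a tensor subcategory, so it is closed under subobjects. It therefore suffices to prove $J_{\B_A^\loc}\in\D$.

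\emph{Identifying $\D$.} The braiding $c^A$ gives a braided lift $\mathcal{L}\subseteq\Z(\B_A)$ of $\B_A^\loc$, with objects $(M,c^A_{M,-})$. I claim $\D=\mathcal{L}'$, the Müger centralizer in $\Z(\B_A)$.

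First, $\D\subseteq\mathcal{L}'$. An object of $\D$ is $\tF_A(X)=X\otimes A$ with the inverse-braiding half-braiding $\sigma^-$. Its double braiding with $(M,c^A_{M,-})$ for local $M$ reduces to
\[
  c^{-1}_{M,X}\circ c_{X,M}^{-1}\ \text{and}\ c_{M,X}\circ c_{X,M}
\]
composed in a way that cancels, using locality of $M$ to move $A$ past $M$. This is essentially the content of \cite[Lemma~2.2]{shimizu2026transparent}, already used in Lemma~\ref{lem:alpha-basic-properties}(4).

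Second, equality follows by counting dimensions. The Drinfeld center $\Z(\B_A)$ is nondegenerate, so by \cite[Lemma~4.8]{shimizu2019non} (the double-centralizer/dimension formula),
\[
  \FP(\mathcal{L}')=\frac{\FP(\B_A)^2}{\FP(\B_A^\loc)}.
\]
Now substitute $\FP(\B_A)=\FP(\B)/\FP_\B(A)$ and Theorem~\ref{thm:transparent-local-inputs}(2). This gives $\FP(\mathcal{L}')=\FP(\B)/\FP_\B(A')$. By Theorem~\ref{thm:transparent-local-inputs}(1), this equals $\FP(\overline{\B_{A'}})=\FP(\D)$. Since $\D\subseteq\mathcal{L}'$ and the dimensions agree, $\D=\mathcal{L}'$.

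\emph{Main step: $J:=\R_{\B_A^\loc}^\ra(\unit)$ centralizes $\mathcal{L}$.} This is where I expect the real work. I would first try the following. The composite
\[
  \mathcal{L}\hookrightarrow\Z(\B_A)\xrightarrow{\R_{\B_A^\loc}}\Z(\B_A^\loc;\B_A)
\]
lands in objects whose half-braiding with $\B_A^\loc$ is the braiding $c^A$. Hence, for $L\in\mathcal{L}$, the functor $\R:=\R_{\B_A^\loc}$ intertwines $L\otimes-$ with a functor on the relative center carrying a compatible half-braiding. This yields a projection formula
\[
  \R^\ra(\R(L)\otimes Y)\cong L\otimes\R^\ra(Y),
\]
natural in $Y$ and compatible with the double braiding. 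Taking $Y=\unit$, the double braiding $c_{J,L}c_{L,J}$ becomes the mate of the double braiding of $\R(L)$ with the unit in $\Z(\B_A^\loc;\B_A)$, which is trivial; hence $c_{J,L}c_{L,J}=\id$.

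If this mate argument proves awkward, the fallback is to write $J$ as an end $\int_{V}V\otimes V^*$-type object, equivalently via the Hom-description
\[
  \Hom(X,J)\cong\Hom(\R X,\unit).
\]
Then I would check directly, using naturality of the half-braidings of $\B_A^\loc$-objects, that the double braiding with each $(M,c^A_{M,-})$ acts trivially on every such Hom-space. Either way we get $J\in\mathcal{L}'=\D$, and hence $J_\E\in\D$ for every $\E\in[\B_A^\loc,\B_A]_\otimes$.
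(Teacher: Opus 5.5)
Your proof is correct, but after the shared reduction to $\E_0=\B_A^\loc$ it takes a different route from the paper.

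\textbf{The paper's route.} The paper never identifies $\D$ intrinsically. It sets $X=i^{\ra}(J_0)$, so that $i(X)\hookrightarrow J_0$. It obtains $\FP_\D(X)=\FP_\B(A)/\FP_\B(A')$ from $h^{\ra}(X)\cong\alpha(\B_A^\loc)\cong A$, which is Lemma~\ref{lem:alpha-basic-properties}(4). It obtains $\FP(J_0)=\FP(\B_A)/\FP(\B_A^\loc)$ from surjectivity of $\R_{\E_0}$. Since these dimensions agree, the monomorphism $i(X)\hookrightarrow J_0$ is an isomorphism.

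\textbf{Your route.} You first prove $\D=\mathcal{L}'$: one inclusion by a braiding computation, and equality by nondegeneracy of $\Z(\B_A)$ together with the centralizer dimension formula $\FP(\mathcal{L})\FP(\mathcal{L}')=\FP(\Z(\B_A))$. You then show $J_0\in\mathcal{L}'$ by an adjunction argument.

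\textbf{Tightening the two sketched steps.} For the inclusion $\D\subseteq\mathcal{L}'$, the cleanest check is direct. Under $(X\otimes A)\otimes_A M\cong X\otimes M$, the double braiding of $\tF_A(X)$ with $(M,c^A_{M,-})$ is $c_{M,X}\circ c_{M,X}^{-1}=\id$; locality is needed only for $c^A_{M,-}$ to be defined.

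For the main step, which you only sketch, the precise mechanism is as follows. Take $L=(M,\tau)$ with $M\in\B_A^\loc$, and set $\theta_{(V,\sigma)}:=\sigma_M\circ\tau_V$ for $(V,\sigma)\in\Z(\B_A^\loc;\B_A)$.
\begin{itemize}
  \item These maps are natural in $(V,\sigma)$.
  \item They agree with the double braiding $c_{Z,L}c_{L,Z}$ on objects of the form $\R(Z)$.
  \item They are trivial at the unit.
\end{itemize}
By the projection formula $L\otimes J_0\cong\R^{\ra}\R(L)$, which holds for any $L$ by rigidity, the adjunct of $c_{J_0,L}c_{L,J_0}$ is $(\id\otimes\varepsilon)\circ\theta_{\R(J_0)}$. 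By naturality this equals $\theta_{\unit}\circ(\id\otimes\varepsilon)=\id\otimes\varepsilon$, which is the adjunct of $\id$. So the double braiding is the identity.

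Only $\U(L)\in\B_A^\loc$ is used here, not the specific half-braiding $c^A$. The argument therefore applies verbatim to $J_\E$ for any $\E\supseteq\B_A^\loc$, and your reduction step becomes unnecessary.

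\textbf{What each route buys.} The paper's route is shorter and uses only dimension formulas already established, together with Lemma~\ref{lem:alpha-basic-properties}(4). Yours avoids that lemma and the computation of $\FP(J_0)$, but needs the nondegeneracy of $\Z(\B_A)$ and the centralizer formula. In return it gives the structural statement that $\im(F_A^-)$ is exactly the M\"uger centralizer of the lifted local modules in $\Z(\B_A)$. This provides a conceptual reason for $J_\E\in\D$ rather than a coincidence of dimensions.
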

\begin{proof}
Set
\[
  \E_0:=\B_A^\loc,
  \qquad
  J_0:=J_{\E_0},
  \qquad
  X:=i^{\ra}(J_0).
\]
The counit of $i\dashv i^{\ra}$ gives a monomorphism $i(X)\hookrightarrow J_0$. 
Since $\tF_A=i\circ h$, we have
\[
  h^{\ra}(X)
  \cong \tF_A^{\ra}(J_0)
  =\alpha(\E_0)
  \cong A,
\]
where the last isomorphism follows from
Lemma~\ref{lem:alpha-basic-properties}(4). Since $i$ is fully faithful, the unit
of $i\dashv i^{\ra}$ is an isomorphism. Hence
$i^{\ra}(\unit_{\Z(\B_A)})\cong\unit_{\D}$, and
Theorem~\ref{thm:transparent-local-inputs}(1) gives
\begin{equation}\label{eq:alpha-A'}
  h^{\ra}(\unit_{\D})
  \cong (F_A^-)^{\ra}(\unit_{\Z(\B_A)})
  \cong A'.  
\end{equation}
It follows from \cite[Lemma~6.2.4]{etingof2016tensor} that $\FP(\D)=\frac{\FP(\B)}{\FP_{\B}(A')}$ and $\FP_{\D}(X) = \frac{\FP_{\B}(A)}{\FP_{\B}(A')}$. 

We now compute the Frobenius--Perron dimension of $J_0$. Since
$\R_{\E_0}$ is surjective, \cite[Lemma~6.2.4]{etingof2016tensor} and
\cite[Lemma~4.6]{shimizu2019non} give
\begin{equation*}
  \FP_{\Z(\B_A)}(J_0) = \frac{\FP(\Z(\B_A))}{\FP(\Z(\E_0;\B_A))} = \frac{\FP(\B_A)}{\FP(\E_0)} = \frac{\FP_{\B}(A)}{\FP_{\B}(A')}.
\end{equation*}
The last equality follows from \cite[Lemma~5.15]{shimizu2026commutative}
and Theorem~\ref{thm:transparent-local-inputs}(2). The tensor functor $i$
preserves Frobenius--Perron dimensions. Thus the monomorphism
$i(X)\hookrightarrow J_0$ has source and target of the same dimension. It
is therefore an isomorphism, and $J_0\in\D$.

For a general $\E\in[\B_A^\loc,\B_A]_{\otimes}$, the inclusion
$\E_0\subseteq\E$ and the
restriction of half-braidings give a monomorphism $J_{\E}\hookrightarrow J_0$ 
by the same argument as in Lemma~\ref{lem:alpha-basic-properties}(2).
Since $\D$ is closed under subobjects, $J_{\E}\in\D$.
\end{proof}

\begin{proposition}\label{prop:FP-alpha-D}
For every $\E\in[\B_A^\loc,\B_A]_{\otimes}$, we have
\[
  \FP_{\B}(\alpha(\E))
  =
  \frac{
    \FP(\B)\FP_{\B}(A')
  }{
    \FP_{\B}(A)\FP(\E)
  }.
\]
\end{proposition}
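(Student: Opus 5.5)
We plan to factor $\Q_\E$ as
\[
  \B\xrightarrow{h}\D\xrightarrow{i}\Z(\B_A)\xrightarrow{\R_\E}\Z(\E;\B_A),
\]
so that $\alpha(\E)=\Q_\E^\ra(\unit)\cong h^\ra\bigl(i^\ra\R_\E^\ra(\unit)\bigr)=h^\ra(i^\ra(J_\E))$. Lemma~\ref{lem:restriction-adjoint-in-minus-image} says $J_\E\in\D$. Since $i$ is fully faithful, the unit of $i\dashv i^\ra$ is an isomorphism, so $J_\E\cong i(Y)$ with $Y:=i^\ra(J_\E)$. This lets us compute the dimension in two stages: first $\FP_\D(Y)$, then $\FP_\B(h^\ra(Y))$.

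For the first stage, apply Lemma~\ref{lem:FPdim} to $F_1=i$ (injective) and $F_2=\R_\E$. The functor $\R_\E$ is surjective by \cite[Lemma~4.6]{shimizu2019non}, and the hypothesis $F_2^\ra(\unit)=J_\E\in\im(i)=\D$ is exactly Lemma~\ref{lem:restriction-adjoint-in-minus-image}. This gives
\[
  \FP_\D(Y)=\frac{\FP(\Z(\B_A))}{\FP(\Z(\E;\B_A))}=\frac{\FP(\B_A)^2}{\FP(\B_A)\FP(\E)}=\frac{\FP(\B_A)}{\FP(\E)},
\]
using $\FP(\Z(\E;\B_A))=\FP(\E)\FP(\B_A)$ from \cite{shimizu2019non}. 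We then substitute $\FP(\B_A)=\FP(\B)/\FP_\B(A)$, which is \cite[Lemma~5.15]{shimizu2026commutative}, or equivalently follows from surjectivity of $F_A$ together with \cite[Lemma~6.2.4]{etingof2016tensor}.

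For the second stage we need $\FP_\B(h^\ra(Y))=\FP_\D(Y)\,\FP_\B(h^\ra(\unit_\D))$, where $h^\ra(\unit_\D)\cong A'$ by \eqref{eq:alpha-A'}. Since $h$ is a surjective tensor functor, $h^\ra$ is exact by Lemma~\ref{lem:tensor-functor-facts}(2), so $\FP_\B(h^\ra(-))$ is additive on $\D$. Moreover, $h^\ra(V)$ contains the regular-type object $h^\ra(\unit)$ with multiplicity governed by $V$; concretely, the standard result \cite[Lemma~6.2.4]{etingof2016tensor} and its proof give $\FP_\B(h^\ra(V))=\FP_\D(V)\,\FP(\B)/\FP(\D)$ for all $V$. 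Combining this with $\FP(\D)=\FP(\B)/\FP_\B(A')$, as already noted in the proof of Lemma~\ref{lem:restriction-adjoint-in-minus-image}, yields
\[
  \FP_\B(\alpha(\E))=\frac{\FP(\B)}{\FP_\B(A)\FP(\E)}\cdot\FP_\B(A'),
\]
which is the claimed formula. As a consistency check, the same reasoning with $V=X$ gives $\FP_\B(A)$ for $\E=\B_A^\loc$.

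The main obstacle is justifying the multiplicativity $\FP_\B(h^\ra(V))=\FP_\D(V)\FP_\B(h^\ra(\unit))$ for an arbitrary object $V$, not just $V=\unit$. I would prove it by considering the linear map $[V]\mapsto\FP_\B(h^\ra(V))$ on $\mathrm{Gr}(\D)$. Using the projection formula $h^\ra(V)\otimes W\cong h^\ra(V\otimes h(W))$, this map is an eigenvector-type functional for right multiplication by classes $[h(W)]$. Surjectivity of $h$ then forces it to be a positive multiple of $\FP_\D$, by uniqueness of the positive Frobenius--Perron eigenvector. Evaluating at $V=\unit$ pins down the scalar as $\FP_\B(A')$.
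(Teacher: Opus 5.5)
Your proposal is correct and follows essentially the same route as the paper. Like the paper, you factor $\Q_\E=\R_\E\circ i\circ h$, apply Lemma~\ref{lem:FPdim} to $\D\xrightarrow{i}\Z(\B_A)\xrightarrow{\R_\E}\Z(\E;\B_A)$ to get $\FP_\D(i^\ra(J_\E))=\FP(\B)/(\FP_\B(A)\FP(\E))$, and then use the multiplicativity $\FP_\B(h^\ra(V))=\FP_\B(h^\ra(\unit))\FP_\D(V)$ together with $h^\ra(\unit)\cong A'$. The only difference is that you also sketch a Perron--Frobenius argument for that multiplicativity, via the projection formula and transitivity of $\mathrm{Gr}(\D)$; this sketch is sound, whereas the paper simply takes the multiplicativity from \cite[Lemma~6.2.4]{etingof2016tensor}.
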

\begin{proof}
By Lemma~\ref{lem:restriction-adjoint-in-minus-image}, $J_{\E}$ belongs
to $\D$. We apply Lemma~\ref{lem:FPdim} to the tensor functors
\[
  \D\xrightarrow{i}\Z(\B_A)
  \xrightarrow{\R_{\E}}\Z(\E;\B_A).
\]
The functor $i$ is injective, the functor $\R_{\E}$ is surjective, and
$\R_{\E}^{\ra}(\unit)=J_{\E}$ belongs to $\im(i)$. Hence
\begin{equation}\label{eq:alpha-FPdim-lemma}
  \FP_{\D}(J_{\E})
  =\frac{\FP(\Z(\B_A))}{\FP(\Z(\E;\B_A))}
  =\frac{\FP(\B_A)}{\FP(\E)}
  =\frac{\FP(\B)}{\FP_{\B}(A)\FP(\E)}.
\end{equation}
Here the second equality follows from
\cite[Lemma~4.6]{shimizu2019non}, and the last one follows from
\cite[Lemma~5.15]{shimizu2026commutative}.

Since $\alpha(\E)=\tF_A^{\ra}(J_{\E})\cong h^{\ra}(J_{\E})$, another
application of \cite[Lemma~6.2.4]{etingof2016tensor} gives
\[
  \FP_{\B}(\alpha(\E))
  =\FP_{\B}(h^{\ra}(\unit))\FP_{\D}(J_{\E})
  \stackrel{\eqref{eq:alpha-A'}}{=} \FP_{\B}(A')\FP_{\D}(J_{\E}).
\]
The result now follows from \eqref{eq:alpha-FPdim-lemma}.
\end{proof}


\subsection{The construction \texorpdfstring{$\beta$}{beta}}
We now construct the map in the opposite direction. Starting from a
subalgebra $B\subseteq A$, we induce local $B$-modules to $A$-modules and
take the image of this induction functor.

Let $B\in\Lalg{A}$. Recall the induction functor $\Ind_B^A = -\otimes_B A: \B_B \to \B_A$. Let $L$ denote the restriction of $\Ind_B^A$ to $\B_B^\loc$. We define
\begin{equation*}
  \beta(B) = \im(L) \subseteq \B_A.
\end{equation*}
Also, define
\begin{equation*}
  \T_B = \{ M\in\B_A \ \mid \ \Res_B^A(M) \in \B_B^\loc\}.
\end{equation*}
Note that $A\in\T_B$. The category $\beta(B)$ is defined through induction, while $\T_B$ describes the same subcategory intrinsically through locality after
restriction to $B$. The next lemma shows that these two descriptions agree.

\begin{lemma}\label{lem:T-beta-same}
  As full subcategories of $\B_A$, $\T_B = \beta(B)$.
\end{lemma}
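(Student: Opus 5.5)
We prove the two inclusions $\beta(B)\subseteq\T_B$ and $\T_B\subseteq\beta(B)$, using that $\T_B$ is a full subcategory closed under subquotients (since $\Res_B^A$ is exact by Lemma~\ref{lem:ind-surjective}(2), and locality of a $B$-module passes to submodules and quotient modules) and that $\beta(B)=\im(L)$ is the subquotient closure of the essential image of $L$.

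\emph{Step 1: $\beta(B)\subseteq\T_B$.} By closure of $\T_B$ under subquotients, it suffices to show that $\Res_B^A\Ind_B^A(N)=N\otimes_B A$ is a local $B$-module for every $N\in\B_B^\loc$. As a $B$-module, $N\otimes_B A$ is the tensor product in $\B_B$ of $N$ and $A|_B$. The object $A|_B$ is local by Lemma~\ref{lem:res-of-local}, since the regular $A$-module is local, and $N$ is local by assumption. Since $\B_B^\loc$ is a monoidal subcategory of $\B_B$ (Pareigis), $N\otimes_B A|_B\in\B_B^\loc$. It remains to check that the $B$-action on $N\otimes_BA$ obtained by restricting the $A$-action agrees with the $B$-module structure of $N\otimes_B (A|_B)$ in $\B_B$. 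This holds because, in $\B_B$, the left and right $B$-actions on a right module are related by the braiding, and $B$ is commutative.

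\emph{Step 2: $\T_B\subseteq\beta(B)$.} Let $M\in\T_B$, so that $N:=\Res_B^A(M)\in\B_B^\loc$. The counit of $\Ind_B^A\dashv\Res_B^A$ is a map $L(N)=N\otimes_B A\to M$ in $\B_A$, namely the map induced by the action $a_M^r$. It is an epimorphism because it has a section in $\B$ induced by the unit of $A$. Hence $M$ is a quotient of $L(N)$, and therefore $M\in\im(L)=\beta(B)$.

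The main obstacle is the identification in Step~1: verifying that the $B$-module $\Res_B^A(N\otimes_B A)$ coincides with the tensor product $N\otimes_B(A|_B)$ in $\B_B$, so that Pareigis' closure of local modules under $\otimes_B$ applies. To do this, we would compare the two $B$-actions on the coequalizer $N\otimes A\to N\otimes_B A$ and use commutativity of $B$ together with the fact that $B\hookrightarrow A$ is an algebra map. If that comparison gets messy, the fallback is to verify locality directly with the hexagon identities: the double braiding of $N\otimes A$ with $B$ is the product of the double braidings of $N$ with $B$ and of $A$ with $B$, and each of these acts trivially after composing with the action.
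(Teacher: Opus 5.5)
Your proposal is correct and follows essentially the paper's proof. The paper proves the same two inclusions with the same ingredients: exactness of $\Res_B^A$, Lemma~\ref{lem:res-of-local} applied to $A$, closure of $\B_B^\loc$ under $\otimes_B$ and subquotients, and the epimorphic counit $\Ind_B^A\Res_B^A(M)\to M$. Your explicit splitting $\pi\circ(\id_M\otimes u_A)$ in $\B$ is a more elementary justification of that epimorphism than the paper's appeal to faithfulness of $\Res_B^A$ via Lemma~\ref{lem:ind-surjective}(2).

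One small correction concerns the identification $\Res_B^A(N\otimes_B A)=N\otimes_B\Res_B^A(A)$ in $\B_B$, which the paper takes for granted. It uses commutativity of $A$, not of $B$. The point is that the braided left action $m_A\circ(\id_A\otimes i_B)\circ c_{B,A}$ on $A|_B$ must equal left multiplication $m_A\circ(i_B\otimes\id_A)$, and this follows from naturality of $c$ together with $m_A\circ c_{A,A}=m_A$.
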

\begin{proof}
Take $M\in\T_B$. By Lemma~\ref{lem:ind-surjective}(2), the counit
\[
  \Ind_B^A(\Res_B^A(M))\longrightarrow M
\]
is an epimorphism. Since $\Res_B^A(M)$ is local, this shows that
$M\in\beta(B)$.

Conversely, let $M\in\beta(B)$. Then $M$ is a subquotient of
$\Ind_B^A(N)$ for some $N\in\B_B^\loc$. The functor $\Res_B^A$ is exact,
and
\[
  \Res_B^A(\Ind_B^A(N))=N\otimes_B A.
\]
Since $A$ is the tensor unit of $\B_A^\loc$,
Lemma~\ref{lem:res-of-local} gives
$\Res_B^A(A)\in\B_B^\loc$. The category $\B_B^\loc$ is monoidal, and hence
\[
  N\otimes_B A
  =N\otimes_B\Res_B^A(A)
  \in\B_B^\loc.
\]
Since $\B_B^\loc$ is closed under subquotients,
$\Res_B^A(M)\in\B_B^\loc$. Hence $M\in\T_B$.
\end{proof}

\begin{lemma}\label{lem:B-A-loc-in-TB}
\begin{enumerate}
  \item $\B_A^\loc$ is contained in $\T_B$.
  \item $\beta$ is order reversing.
\end{enumerate} 

\end{lemma}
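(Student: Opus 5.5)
For part (1), I would use the intrinsic description $\T_B=\beta(B)$ from Lemma~\ref{lem:T-beta-same}. Given $M\in\B_A^\loc$, Lemma~\ref{lem:res-of-local} applied to the subalgebra $B\subseteq A$ gives $\Res_B^A(M)\in\B_B^\loc$ directly. Hence $M\in\T_B$, and so $\B_A^\loc\subseteq\T_B=\beta(B)$. No further input is needed here; the work was already done in Lemma~\ref{lem:res-of-local}.

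For part (2), suppose $B_1\leq B_2$ in $\Lalg{A}$, via a monic algebra map $j:B_1\hookrightarrow B_2$ with $i_{B_2}\circ j=i_{B_1}$. I want to show $\beta(B_2)\subseteq\beta(B_1)$. Again I would work with $\T$ rather than with images of induction. Take $M\in\T_{B_2}$, so that $\Res_{B_2}^A(M)$ is a local $B_2$-module. Since $i_{B_2}\circ j=i_{B_1}$, the restrictions are transitive: $\Res_{B_1}^A(M)=\Res_{B_1}^{B_2}\Res_{B_2}^A(M)$. Here $B_1$ is a subalgebra of $B_2$, which is commutative, so Lemma~\ref{lem:res-of-local} applies verbatim with $(B_2,B_1)$ in place of $(A,B)$. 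It shows that $\Res_{B_1}^{B_2}$ sends local $B_2$-modules to local $B_1$-modules. Therefore $M\in\T_{B_1}$. Lemma~\ref{lem:T-beta-same} then converts $\T_{B_2}\subseteq\T_{B_1}$ into $\beta(B_2)\subseteq\beta(B_1)$.

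The only point needing care is checking that $\beta(B)$ depends only on the class of $B$ in $\Lalg{A}$, and that the transitivity of restriction holds on the nose. Both follow because the $B_i$-actions on $M$ are defined by precomposing $a_M^r$ with the given inclusions into $A$. The locality computation in Lemma~\ref{lem:res-of-local} uses only naturality of the braiding and commutativity of the larger algebra, so it does not need the larger algebra to be $A$ itself. I do not expect any genuine obstacle. If anything is delicate, it is confirming that $\beta(B)$ is a tensor subcategory, but that is part of the definition via $\im(L)$ and is not required for these two claims.
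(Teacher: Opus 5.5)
Your proposal is correct and follows the paper's proof essentially verbatim: part (1) is a direct application of Lemma~\ref{lem:res-of-local}, and part (2) uses the transitivity $\Res_{B_1}^A=\Res_{B_1}^{B_2}\circ\Res_{B_2}^A$ together with Lemma~\ref{lem:res-of-local} (applied to the commutative algebra $B_2$) and then Lemma~\ref{lem:T-beta-same} to conclude $\beta(B_2)=\T_{B_2}\subseteq\T_{B_1}=\beta(B_1)$.
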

\begin{proof}
(1) is a direct consequence of Lemma~\ref{lem:res-of-local}.

(2) Suppose that $B_1\leq B_2$. If $M\in\T_{B_2}$, then
$\Res_{B_2}^A(M)$ is a local $B_2$-module. Restricting further along
$B_1\hookrightarrow B_2$ preserves locality by
Lemma~\ref{lem:res-of-local}. Since $\Res_{B_1}^A = \Res_{B_1}^{B_2}\circ\Res_{B_2}^A$, we obtain $M\in\T_{B_1}$. Hence
\[
  \beta(B_2)=\T_{B_2}\subseteq\T_{B_1}=\beta(B_1),
\]
so $\beta$ is order reversing.
\end{proof}

Together, Lemmas~\ref{lem:T-beta-same} and \ref{lem:B-A-loc-in-TB} show that
$\beta:\Lalg{A}\to\Lten{\B_A}$ is well defined and that its image is
contained in $[\B_A^\loc,\B_A]_{\otimes}$. We next compute its
Frobenius--Perron dimension.
\begin{proposition}\label{prop:FP-beta-B}
We have
\[
  \FP(\beta(B))
  =
  \frac{
    \FP(\B)\FP_{\B}(B\cap\B')
  }{
    \FP_{\B}(B)\FP_{\B}(A)
  }.
\]
\end{proposition}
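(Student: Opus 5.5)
We compute $\FP(\beta(B))$ by realizing $\beta(B)$ as the image of the tensor functor $L=\Ind_B^A|_{\B_B^\loc}:\B_B^\loc\to\B_A$ and applying the standard formula $\FP(\im(L))=\FP(\B_B^\loc)/\FP_{\B_B^\loc}(L'^{\ra}(\unit))$. Here $L'$ denotes the corestriction of $L$ to its image; it is a surjective tensor functor onto $\beta(B)$, so \cite[Lemma~6.2.4]{etingof2016tensor} applies. The numerator is known from Theorem~\ref{thm:transparent-local-inputs}(2) applied to the commutative simple algebra $B$:
\[
  \FP(\B_B^\loc)=\frac{\FP(\B)\FP_\B(B\cap\B')}{\FP_\B(B)^2}.
\]
Comparing with the claimed formula, it therefore suffices to show that the object $L'^{\ra}(A)\in\B_B^\loc$ has Frobenius--Perron dimension $\FP_\B(A)/\FP_\B(B)$.

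To identify $L'^{\ra}(A)$, we use Lemma~\ref{lem:T-beta-same}. Every object of $\beta(B)=\T_B$ restricts to a local $B$-module, so for $M\in\beta(B)$ and $N\in\B_B^\loc$ we have
\[
  \Hom_{\B_A}(L(N),M)\cong\Hom_{\B_B}(N,\Res_B^A M)=\Hom_{\B_B^\loc}(N,\Res_B^A M).
\]
Hence $L'^{\ra}$ is just $\Res_B^A$ restricted to $\T_B$, landing in $\B_B^\loc$. In particular $L'^{\ra}(A)=\Res_B^A(A)$, which is $A$ regarded as a (local) $B$-module.

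Next we compute $\FP_{\B_B^\loc}(\Res_B^A A)$. Since $\B_B^\loc\subseteq\B_B$ is a tensor subcategory, the Frobenius--Perron dimension of an object can be computed in $\B_B$. The free functor $F_B:\B\to\B_B$ is surjective (Theorem~\ref{thm:central-simple-module-category}) with right adjoint the forgetful functor. Under such a functor one has
\[
  \FP_{\B_B}(M)=\frac{\FP_\B(M)}{\FP_\B(B)}.
\]
One route to this formula is \cite[Lemma~5.15]{shimizu2026commutative}. Alternatively, observe that $M\mapsto\FP_\B(\text{underlying }M)/\FP_\B(B)$ is a character of the Grothendieck ring of $\B_B$ that is positive on simples, and use uniqueness of such characters. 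Applying this to $M=\Res_B^A A$ gives $\FP_\B(A)/\FP_\B(B)$, and dividing $\FP(\B_B^\loc)$ by it yields exactly the stated formula.

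The main obstacle is the second step: justifying that the right adjoint of the corestricted functor $L'$ is indeed $\Res_B^A$, and that \cite[Lemma~6.2.4]{etingof2016tensor} applies with $L'$ surjective onto $\beta(B)$. This needs $\Res_B^A(M)$ to be local for every $M\in\beta(B)$, which is exactly what Lemma~\ref{lem:T-beta-same} supplies. The dimension formula for modules, the last step, is routine given the cited results.
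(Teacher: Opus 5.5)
Your proposal is correct and matches the paper's proof essentially step for step. The paper also applies \cite[Lemma~6.2.4]{etingof2016tensor} to the surjective functor $L:\B_B^\loc\to\beta(B)$, whose right adjoint is the restriction of $\Res_B^A$ (so $L^{\ra}(\unit)=A$); it takes $\FP(\B_B^\loc)$ from Theorem~\ref{thm:transparent-local-inputs}(2) and obtains $\FP_{\B_B}(A)=\FP_\B(A)/\FP_\B(B)$ by applying the same lemma to $F_B:\B\to\B_B$. Your use of Lemma~\ref{lem:T-beta-same} to justify that the adjunction restricts makes explicit a detail the paper leaves implicit, and your main route for the last step is sound; the ``character'' alternative, however, would need multiplicativity of $M\mapsto\FP_\B(M)$ under $\otimes_B$, which is not immediate.
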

\begin{proof}
The adjunction $\Ind_B^A\dashv\Res_B^A$ restricts to an adjunction
\[
  L:\B_B^\loc\rightleftarrows\beta(B):L^\ra,
\]
where $L^\ra$ is the restriction of $\Res_B^A$ and
$L^\ra(\unit)=A$. The functor $L$ is surjective and is identified with the
free module functor
\[
  \B_B^\loc\longrightarrow (\B_B^\loc)_A\simeq\beta(B).
\]
Note that $L^\ra$ is the restriction of
$\Res_B^A$ and the tensor unit of $\beta(B)$ is $A$. Hence
\cite[Lemma~6.2.4]{etingof2016tensor} gives
\[
  \FP(\beta(B))
  =
  \frac{\FP(\B_B^\loc)}
  {\FP_{\B_B^\loc}(\Res_B^A(A))}.
\]
By Theorem~\ref{thm:transparent-local-inputs}(2),
\[
  \FP(\B_B^\loc)
  =
  \frac{
    \FP(\B)\FP_{\B}(B\cap\B')
  }{
    \FP_{\B}(B)^2
  }.
\]
The inclusion $\B_B^\loc\hookrightarrow\B_B$ preserves
Frobenius--Perron dimensions. Also, applying
\cite[Lemma~6.2.4]{etingof2016tensor} to $F_B:\B\to\B_B$ gives
\[
  \FP_{\B_B^\loc}(\Res_B^A(A)) =
  \FP_{\B_B}(A)=\frac{\FP_{\B}(A)}{\FP_{\B}(B)}.
\]
Substituting these formulas proves the result.
\end{proof}


\subsection{The Galois connection and its consequences}
In this section, we prove the Galois connection. As before, $\B$ is a braided finite tensor category and $A$ is a simple commutative algebra in $\B$. We first recall the lattice operations in the two lattices $\Lalg{A}$ and $\Lten{\B_A}$. 

For $B_1,B_2\in\Lalg{A}$, write
$\langle B_1,B_2\rangle_{\mathrm{alg}}$ for the subalgebra of $A$ generated
by $B_1$ and $B_2$. For $\E_1,\E_2\in\Lten{\B_A}$, write
$\langle\E_1,\E_2\rangle_{\otimes}$ for the tensor subcategory generated by
$\E_1$ and $\E_2$. The meets in $\Lalg{A}$ and $\Lten{\B_A}$ are
intersections, and the joins are these generated objects. The intervals
$[A',A]_{\mathrm{alg}}$ and $[\B_A^\loc,\B_A]_{\otimes}$ are sublattices of
the corresponding full lattices. For subalgebras, the required intersection
is already a finite intersection because $A$ has finite length.

\begin{theorem}\label{thm:general-galois-connection}
The maps $\alpha:\Lten{\B_A}\to\Lalg{A}$ and
$\beta:\Lalg{A}\to\Lten{\B_A}$ form an order-reversing Galois connection.
Explicitly, for every $B\in\Lalg{A}$ and $\E\in\Lten{\B_A}$,
\[
  \E\subseteq\beta(B)
  \quad\Longleftrightarrow\quad
  B\subseteq\alpha(\E).
\]
Their closure operators are
\[
  \alpha\beta(B)=\langle B,A'\rangle_{\mathrm{alg}},
  \qquad
  \beta\alpha(\E)=\langle\E,\B_A^\loc\rangle_\otimes.
\]
Thus $\alpha\beta(B)=B$ if and only if $A'\subseteq B$, while
$\beta\alpha(\E)=\E$ if and only if $\B_A^\loc\subseteq\E$. Hence their
fixed points are $[A',A]_{\mathrm{alg}}$ and
$[\B_A^\loc,\B_A]_{\otimes}$, respectively.
\end{theorem}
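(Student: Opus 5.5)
The plan is to prove the Galois relation directly through Lemma~\ref{lem:restriction-locality} and Lemma~\ref{lem:alg-map-factorization}. The closure formulas then follow from Frobenius--Perron dimension counts (Propositions~\ref{prop:FP-alpha-D} and~\ref{prop:FP-beta-B}), after first reducing to the intervals.

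\emph{Galois relation.} Fix $B\in\Lalg{A}$ and $\E\in\Lten{\B_A}$.

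First, apply Lemma~\ref{lem:alg-map-factorization} with $H=F_A$, $F=\Q_\E$ and $G=\U_\E$. Here $H\cong G\circ F$, $H^\ra(\unit)=A$ and $F^\ra(\unit)=\alpha(\E)$, and $j$ is the canonical inclusion $i_{\alpha(\E)}$. The lemma says that $B\subseteq\alpha(\E)$, i.e.\ $i_B$ factors through $i_{\alpha(\E)}$, holds if and only if the adjunct
\[
q=m_A\circ(i_B\otimes\id_A):F_A(B)\to A
\]
is of the form $\U_\E(\varphi)$ for some $\varphi:\Q_\E(B)\to\unit$ in $\Z(\E;\B_A)$. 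Concretely, this means $q$ commutes with the inverse-braiding half-braidings $\sigma^-_{B,M}$ for all $M\in\E$.

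Second, apply Lemma~\ref{lem:restriction-locality} to the central functor $F=F_A^-:\B\to\B_A$. Its right adjoint $R$ is the forgetful functor, so $R(\unit)=A$, and its $q$ is the same morphism as above. For $V=M\in\B_A$, the counit $\varepsilon_M:M\otimes A\to M$ is the action map, which is epic. Moreover $F(B)\otimes_A(-)$ is exact because $\B_A$ is rigid. So the lemma gives: $\Res^A_B(M)$ is local if and only if $(\id_M\otimes q)\circ\sigma_{B,M}=q\otimes\id_M$.

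Combining the two steps with $\beta(B)=\T_B$ (Lemma~\ref{lem:T-beta-same}) gives $\E\subseteq\beta(B)\iff B\subseteq\alpha(\E)$. The only real care is checking that the half-braiding in Lemma~\ref{lem:restriction-locality} is exactly the one defining $\Q_\E$, namely $\sigma^-$ restricted to $\E$.

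\emph{Closure on algebras.} Since the connection is order-reversing, $B\subseteq\alpha\beta(B)$, and $A'\subseteq\alpha\beta(B)$ by Lemma~\ref{lem:alpha-basic-properties}(3). Hence $\langle B,A'\rangle_{\mathrm{alg}}\subseteq\alpha\beta(B)$.

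Next I show $\beta(\langle B,A'\rangle_{\mathrm{alg}})=\beta(B)$. One inclusion holds because $\beta$ is order-reversing. For the other, take $M\in\T_B$. Then $\Res^A_{A'}(M)$ is automatically local, since $A'$ is transparent and the double braiding with it is trivial. The algebra $\langle B,A'\rangle_{\mathrm{alg}}$ is the image of $B\otimes A'\to A$, which is an epimorphism onto it. The locality identity for this image then follows by composing the two locality identities, using naturality of the braiding and commutativity. So $M\in\T_{\langle B,A'\rangle}$.

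It therefore suffices to treat the case $A'\subseteq B$. Then $B\cap\B'=A'$, because $B\subseteq A$ forces $B\cap\B'\subseteq A\cap\B'=A'$. Since $\beta(B)$ lies in the interval $[\B_A^\loc,\B_A]_\otimes$, Propositions~\ref{prop:FP-alpha-D} and~\ref{prop:FP-beta-B} combine to give
\[
\FP_\B(\alpha\beta(B))=\frac{\FP_\B(B)\,\FP_\B(A')}{\FP_\B(B\cap\B')}=\FP_\B(B).
\]
The monomorphism $B\hookrightarrow\alpha\beta(B)$ is thus between objects of equal Frobenius--Perron dimension, so it is an isomorphism.

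\emph{Closure on subcategories.} Put $\E^+=\langle\E,\B_A^\loc\rangle_\otimes$. Both $\E$ and $\B_A^\loc$ lie in $\beta\alpha(\E)$: the first by the connection, the second by Lemma~\ref{lem:B-A-loc-in-TB}. Hence $\E^+\subseteq\beta\alpha(\E)$. Applying the Galois relation to this inclusion gives $\alpha(\E)\subseteq\alpha(\E^+)$. The reverse inclusion holds because $\alpha$ is order-reversing, so $\alpha(\E^+)=\alpha(\E)$, and we may assume $\E$ lies in the interval.

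Then $\alpha(\E)\cap\B'=A'$, because $A'\subseteq\alpha(\E)\subseteq A$. Propositions~\ref{prop:FP-beta-B} and~\ref{prop:FP-alpha-D} give
\[
\FP(\beta\alpha(\E))=\frac{\FP(\B)\,\FP_\B(A')}{\FP_\B(\alpha(\E))\,\FP_\B(A)}=\FP(\E).
\]
A tensor subcategory contained in another of the same Frobenius--Perron dimension equals it (\cite[Proposition~6.3.4]{etingof2016tensor}), so $\beta\alpha(\E)=\E$. The fixed-point descriptions follow immediately.

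I expect the main obstacle to be the reduction $\beta(\langle B,A'\rangle_{\mathrm{alg}})=\beta(B)$, namely checking that locality over a generated subalgebra follows from locality over its generators. If that direct argument proves messy, the fallback is to prove the product formula
\[
\FP(\langle B,A'\rangle_{\mathrm{alg}})=\frac{\FP(B)\,\FP(A')}{\FP(B\cap A')}
\]
using centrality of $A'$. The first-choice argument, however, needs only naturality of the braiding and the fact that $B\otimes A'\to\langle B,A'\rangle_{\mathrm{alg}}$ is epic.
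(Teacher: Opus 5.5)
Your proposal is correct and follows essentially the same route as the paper. The Galois relation comes from Lemma~\ref{lem:alg-map-factorization} plus a locality criterion. You use the paper's own Lemma~\ref{lem:restriction-locality} for $F_A$ with its $\sigma^-$ central structure, i.e.\ regarded as a functor from $\oB$, where locality coincides with locality in $\B$; the paper instead cites an external lemma. The rest is Frobenius--Perron counts on the intervals and the saturation argument for $\beta\alpha$.

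The step you flag as the main obstacle is in fact immediate, and this is how the paper handles it. You already have $\langle B,A'\rangle_{\mathrm{alg}}\subseteq\alpha\beta(B)$, so the Galois relation with $\E=\beta(B)$ gives $\beta(B)\subseteq\beta(\langle B,A'\rangle_{\mathrm{alg}})$. Your direct locality computation via transparency of $A'$ is valid but unnecessary.
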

\begin{proof}
We prove the theorem in three stages. First, we translate the inclusion
$B\subseteq\alpha(\E)$ into the locality of $\Res_B^A(M)$ for every
$M\in\E$; this gives the Galois equivalence. We then compute
$\alpha\beta(B)$ using Proposition~\ref{prop:FP-beta-B}. Finally, we compute
$\beta\alpha(\E)$ first for $\E\in [\B_A^\loc,\B_A]_\otimes$ and 
then for arbitrary $\E\in\Lten{\B_A}$ by adjoining $\B_A^\loc$.

Let
\[
  q_B:F_A(B)=B\otimes A\longrightarrow A,
  \qquad
  q_B=m_A\circ(i_B\otimes\id_A),
\]
be the right $A$-module map corresponding to the algebra map
$i_B:B\to A$ under the adjunction $F_A\dashv F_A^\ra$.

Apply Lemma~\ref{lem:alg-map-factorization} with
$(F,G,H)=(\Q_\E,\U_\E,F_A)$. The identity
$\U_\E\circ\Q_\E=F_A$ and the identification of the map $j$ with the
canonical inclusion $i_{\alpha(\E)}:\alpha(\E)\hookrightarrow A$ were
established in Lemma~\ref{lem:alpha-basic-properties}(1). Hence
the algebra-map assertion of Lemma~\ref{lem:alg-map-factorization} shows that
$B\subseteq\alpha(\E)$ if and only if $q_B$ lifts to an algebra morphism
\[
  \Q_\E(B)\longrightarrow\unit_{\Z(\E;\B_A)}
\]
in $\Z(\E;\B_A)$. By the definition of the relative center, this means that
$q_B$ preserves the inverse-braiding half-braiding with every $M\in\E$.
For each such $M$, apply \cite[Lemma~2.2]{shimizu2026transparent} with $L=B$.
The lemma identifies this condition with locality of $\Res_B^A(M)$. Therefore,
\[
  B\subseteq\alpha(\E)
  \quad\Longleftrightarrow\quad
  \E\subseteq\T_B.
\]
Finally, Lemma~\ref{lem:T-beta-same} gives $\T_B=\beta(B)$ and proves the
Galois equivalence. Thus the relative-center factorization defining $\alpha$ is 
equivalent to the locality condition defining $\beta$. Next, we identify the two closure operators.

We first compute $\alpha\beta$. Fix $B\in\Lalg{A}$, set
$C:=\langle B,A'\rangle_{\mathrm{alg}}$, and let $\E_0:=\beta(B)$. The
Galois connection gives $B\subseteq\alpha(\E_0)$. Since
$\E_0\subseteq\B_A$, Lemma~\ref{lem:alpha-basic-properties}(2) gives
$A'=\alpha(\B_A)\subseteq\alpha(\E_0)$. Thus
$C\subseteq\alpha(\E_0)$, and another application of the Galois connection
gives $\E_0\subseteq\beta(C)$. The reverse inclusion follows from
$B\subseteq C$. Hence $\beta(C)=\beta(B)$.

Set $C_1:=\alpha\beta(C)$. The Galois connection gives $C\subseteq C_1$.
Thus both $C$ and $C_1$ contain $A'$. Since $A'$ is the maximal transparent
subalgebra of $A$, we have $C\cap\B'=C_1\cap\B'=A'$. The triangular identity
gives $\beta(C_1)=\beta(C)$. Proposition~\ref{prop:FP-beta-B} now gives
$\FP_{\B}(C)=\FP_{\B}(C_1)$. Therefore $C\subseteq C_1$ is an isomorphism.
Since $\beta(C)=\beta(B)$, we conclude that
$\alpha\beta(B)=\alpha\beta(C)=C=\langle B,A'\rangle_{\mathrm{alg}}$.

We next compute $\beta\alpha$ first on the interval
$[\B_A^\loc,\B_A]_{\otimes}$. Let
$\E_0\in[\B_A^\loc,\B_A]_{\otimes}$. Lemma~\ref{lem:alpha-basic-properties}(3) gives
$A'\subseteq\alpha(\E_0)$, and hence
$\alpha(\E_0)\cap\B'=A'$. Propositions~\ref{prop:FP-beta-B} and
\ref{prop:FP-alpha-D} give
\[
  \FP(\beta\alpha(\E_0))
  =\frac{\FP(\B)\FP_{\B}(A')}
  {\FP_{\B}(\alpha(\E_0))\FP_{\B}(A)}
  =\FP(\E_0).
\]
The Galois connection gives $\E_0\subseteq\beta\alpha(\E_0)$. This inclusion
is an equivalence by \cite[Proposition~6.3.3]{etingof2016tensor}. Thus
$\beta\alpha(\E_0)=\E_0$.

For an arbitrary $\E$, the closure must contain both $\E$ and
$\B_A^\loc$. The natural candidate is therefore the saturation
$\E^{\mathrm{sat}}=\langle\E,\B_A^\loc\rangle_\otimes$. The Galois
connection gives $\E\subseteq\beta\alpha(\E)$, while
Lemma~\ref{lem:B-A-loc-in-TB}(1) gives
$\B_A^\loc\subseteq\beta\alpha(\E)$. Hence
$\E^{\mathrm{sat}}\subseteq\beta\alpha(\E)$. The order reversing property tells 
that $\alpha(\E^{\mathrm{sat}})\subseteq\alpha(\E)$, and the Galois equivalence
applied to the preceding inclusion gives the reverse containment. 
Therefore, we get that $\alpha(\E^{\mathrm{sat}})=\alpha(\E)$. Lastly, since
$\E^{\mathrm{sat}}\in[\B_A^\loc,\B_A]_{\otimes}$, the interval case gives
$\beta\alpha(\E)=\beta\alpha(\E^{\mathrm{sat}})=\E^{\mathrm{sat}}$.
\end{proof}

\begin{remark}
In particular, $\E\subseteq\beta\alpha(\E)$ and
$B\subseteq\alpha\beta(B)$. Applying the order-reversing maps once more gives
$\alpha\beta\alpha=\alpha$ and $\beta\alpha\beta=\beta$.
\end{remark}

\begin{corollary}\label{cor:general-lattice-anti-isomorphism}
The restrictions of $\alpha$ and $\beta$ are mutually inverse lattice
anti-isomorphisms
\[
  [\B_A^{\loc},\B_A]_{\otimes}
  \underset{\beta}{\overset{\alpha}{\rightleftarrows}}
  [A',A]_{\mathrm{alg}}.
\]
For $\E_1,\E_2\in\Lten{\B_A}$ and
$B_1,B_2\in\Lalg{A}$, we have
\begin{align}\label{eq:general-alpha-beta-join}
  \alpha(\langle\E_1,\E_2\rangle_{\otimes})
  =\alpha(\E_1)\cap\alpha(\E_2),
  \quad
  \beta(\langle B_1,B_2\rangle_{\mathrm{alg}})
  =\beta(B_1)\cap\beta(B_2).
  \end{align}
If $\E_1,\E_2\in[\B_A^\loc,\B_A]_{\otimes}$ and
$B_1,B_2\in[A',A]_{\mathrm{alg}}$, then also
\begin{align*}
  \alpha(\E_1\cap\E_2)
  =\langle\alpha(\E_1),\alpha(\E_2)\rangle_{\mathrm{alg}},
  \qquad
  \beta(B_1\cap B_2)
  =\langle\beta(B_1),\beta(B_2)\rangle_{\otimes}.
\end{align*}
\end{corollary}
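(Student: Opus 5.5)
The corollary is a formal consequence of Theorem~\ref{thm:general-galois-connection} together with general facts about antitone Galois connections between complete lattices; I plan no new categorical input. First, the anti-isomorphism of intervals. By the theorem, $\alpha\beta$ is the identity on $[A',A]_{\mathrm{alg}}$ and $\beta\alpha$ is the identity on $[\B_A^\loc,\B_A]_{\otimes}$. It remains to see that the maps land in the right intervals. Lemma~\ref{lem:alpha-basic-properties}(3) gives $A'\subseteq\alpha(\E)$ for every $\E$, and Lemma~\ref{lem:B-A-loc-in-TB} together with Lemma~\ref{lem:T-beta-same} gives $\B_A^\loc\subseteq\beta(B)$ for every $B$. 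Hence the restrictions are mutually inverse order-reversing bijections. An order-reversing bijection between lattices whose inverse is also order-reversing automatically exchanges meets and joins, so it is a lattice anti-isomorphism. I will state this explicitly: joins and meets are determined by the order, and the intervals are sublattices, as noted before the theorem.

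Second, the formulas \eqref{eq:general-alpha-beta-join} on the full lattices. These are the standard statement that the maps of an antitone Galois connection turn joins into meets. For $\alpha$, I argue with the Galois equivalence. For any $B$, we have $B\subseteq\alpha(\langle\E_1,\E_2\rangle_\otimes)$ iff $\langle\E_1,\E_2\rangle_\otimes\subseteq\beta(B)$. This holds iff $\E_1\subseteq\beta(B)$ and $\E_2\subseteq\beta(B)$, using that $\beta(B)$ is a tensor subcategory. That in turn holds iff $B\subseteq\alpha(\E_1)$ and $B\subseteq\alpha(\E_2)$, that is, iff $B\subseteq\alpha(\E_1)\cap\alpha(\E_2)$. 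Taking $B$ to be each side yields equality. The argument for $\beta(\langle B_1,B_2\rangle_{\mathrm{alg}})=\beta(B_1)\cap\beta(B_2)$ is symmetric. The one point to verify is that $\langle B_1,B_2\rangle_{\mathrm{alg}}\subseteq\alpha(\E)$ iff $B_1,B_2\subseteq\alpha(\E)$, which holds because $\alpha(\E)$ is a subalgebra of $A$ and $\langle-,-\rangle_{\mathrm{alg}}$ is the smallest subalgebra containing both. I should also make sure that "$\subseteq$" for subalgebras means factorization of the inclusions, so that the intersection is indeed the meet in $\Lalg{A}$.

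Third, the meet formulas on the intervals. These follow from the first part, since an anti-isomorphism sends meets in one interval to joins in the other. The step needing care, and the only real obstacle, is that the join in $[A',A]_{\mathrm{alg}}$ must agree with $\langle-,-\rangle_{\mathrm{alg}}$ computed in $\Lalg{A}$, and similarly for $\langle-,-\rangle_\otimes$. The same applies to intersections. This holds because the intervals are closed under these operations. The generated subalgebra of two algebras containing $A'$ contains $A'$, and the intersection $B_1\cap B_2$ still contains $A'$. Analogously, $\B_A^\loc\subseteq\E_1\cap\E_2$. Then $\alpha(\E_1\cap\E_2)$ is the interval join of $\alpha(\E_1)$ and $\alpha(\E_2)$, which is $\langle\alpha(\E_1),\alpha(\E_2)\rangle_{\mathrm{alg}}$, and likewise for $\beta$.
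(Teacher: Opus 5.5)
Your proposal is correct and follows the same route as the paper: the fixed-point description from Theorem~\ref{thm:general-galois-connection} gives the interval anti-isomorphism, the join-to-meet formulas come directly from the Galois equivalence, and the remaining two formulas are the meet and join exchange of that anti-isomorphism. You additionally spell out details the paper leaves implicit. These are that $\alpha$ and $\beta$ land in the intervals, and that the intervals are closed under intersections and generated joins.
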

\begin{proof}
Theorem~\ref{thm:general-galois-connection} identifies
$[A',A]_{\mathrm{alg}}$ and $[\B_A^\loc,\B_A]_{\otimes}$ as the two
fixed-point lattices. This proves the anti-isomorphism. The formulas
\eqref{eq:general-alpha-beta-join} follow directly
from the Galois connection. The other two formulas are the meet and join
formulas for the anti-isomorphism.
\end{proof}

\begin{corollary}\label{cor:full-lattice-anti-isomorphism}
\begin{enumerate}
  \item The maps $\alpha$ and $\beta$ are mutually inverse on
  $[\B_A^\loc,\B_A]_{\otimes}$ and $\Lalg{A}$ if and only if
  $A'\cong\unit$.
  \item They are mutually inverse on $\Lten{\B_A}$ and $\Lalg{A}$ if and
  only if $A'\cong\unit$ and
  $\B_A^\loc\simeq\Vect$.
\end{enumerate}
\end{corollary}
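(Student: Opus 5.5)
The plan is to read both parts off Theorem~\ref{thm:general-galois-connection} and Corollary~\ref{cor:general-lattice-anti-isomorphism}. Two facts will be used repeatedly: the composites $\alpha\beta$ and $\beta\alpha$ are the closure operators computed there, and the images of $\alpha$ and $\beta$ always lie in $[A',A]_{\mathrm{alg}}$ and $[\B_A^\loc,\B_A]_{\otimes}$. The first containment is Lemma~\ref{lem:alpha-basic-properties}(3). The second is Lemmas~\ref{lem:T-beta-same} and \ref{lem:B-A-loc-in-TB}(1).

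For (1), suppose first that $A'\cong\unit$. Since the unit map of $A$ is monic ($A$ is simple, so its unit is nonzero), $A'$ is the bottom element of $\Lalg{A}$, and hence $[A',A]_{\mathrm{alg}}=\Lalg{A}$. Corollary~\ref{cor:general-lattice-anti-isomorphism} then gives that $\alpha$ and $\beta$ are mutually inverse between $[\B_A^\loc,\B_A]_{\otimes}$ and $\Lalg{A}$. Conversely, suppose they are mutually inverse on these sets. Then $\alpha\beta(B)=B$ for every $B\in\Lalg{A}$. Apply this to $B=\unit$, viewed as a subalgebra through the unit $u_A$. The closure formula gives $\alpha\beta(\unit)=\langle\unit,A'\rangle_{\mathrm{alg}}=A'$, so $A'=\unit$ as subalgebras, i.e.\ $A'\cong\unit$.

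For (2), suppose first that $A'\cong\unit$ and $\B_A^\loc\simeq\Vect$. Then $\B_A^\loc$ is the bottom element of $\Lten{\B_A}$, since every tensor subcategory contains $\unit_{\B_A}=A$ and hence the copy of $\Vect$ it generates. So both intervals are the full lattices, and Corollary~\ref{cor:general-lattice-anti-isomorphism} again gives mutual inverses. Conversely, mutual inverseness on the full lattices means $\beta\alpha(\E)=\E$ for all $\E$. Taking $\E=\Vect$, the closure formula gives $\Vect=\langle\Vect,\B_A^\loc\rangle_\otimes=\B_A^\loc$. Also $\alpha\beta=\id$ on $\Lalg{A}$, so $A'\cong\unit$ by the argument in (1). (Alternatively, the image of $\beta$ lies in $[\B_A^\loc,\B_A]_\otimes$, so surjectivity of $\beta$ onto $\Lten{\B_A}$ already forces $\Vect\supseteq\B_A^\loc$.)

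The only point that needs care is identifying $\unit$, and likewise $\Vect$, as genuine bottom elements of the lattices, so that "$A'\cong\unit$" means equality in $\Lalg{A}$. For the algebra side, $\Hom(\unit,A)$ is one-dimensional because $A'$ contains $\unit$ and $A$ is simple commutative, and any subalgebra contains the unit. This is routine, and I expect no real obstacle.
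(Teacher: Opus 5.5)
Your proposal is correct and follows the paper's proof. The converse directions come from evaluating $\alpha\beta(B)=\langle B,A'\rangle_{\mathrm{alg}}$ at the unit subalgebra and $\beta\alpha(\E)=\langle\E,\B_A^{\loc}\rangle_\otimes$ at $\Vect$. The forward directions amount to the fixed-point intervals becoming the full lattices, which you spell out more explicitly than the paper does.

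One small remark: you do not need the one-dimensionality of $\Hom_\B(\unit,A)$ to identify $A'$ with the unit subalgebra. If $A'\cong\unit$, then $A'$ is simple, so its nonzero unit map is already an isomorphism. Your justification of that claim is also off; it does hold, but because $\Hom_\B(\unit,A)\cong\mathrm{End}_{\B_A}(A)=\kk$.
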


\begin{proof}
The first statement follows from
$\alpha\beta(B)=\langle B,A'\rangle_{\mathrm{alg}}$ by applying the formula
to the unit subalgebra. For the second statement, both closure operators must
be the identity. The first one is the identity precisely when
$A'\cong\unit$. The second one is the identity precisely when
$\B_A^\loc\simeq\Vect$, as seen by applying it to $\Vect$.
\end{proof}

\begin{example}[The symmetric case]\label{ex:symmetric-case}
Suppose that $\B$ is symmetric. Then $A'=A$ and every $A$-module is local.
Hence $\alpha(\E)=A$ for every $\E\in\Lten{\B_A}$ and
$\beta(B)=\B_A$ for every $B\in\Lalg{A}$. Both maps are constant, and
their fixed-point anti-isomorphism is $\{A\}\leftrightarrow\{\B_A\}$. In
particular, $\beta$ does not distinguish the proper subalgebras of $A$.
\end{example}

\begin{theorem}\label{thm:nondegenerate-lattice-isomorphism}
Let $\B$ be a nondegenerate braided finite tensor category and let $A$ be a
simple commutative algebra in $\B$. Then
\[
  \beta:\Lalg{A}^{\mathrm{op}}
  \xrightarrow{\ \sim\ }
  [\B_A^{\loc},\B_A]_{\otimes}
\]
is an isomorphism of lattices with inverse $\alpha$.
\end{theorem}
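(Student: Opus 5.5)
The plan is to deduce this as a direct specialization of Corollary~\ref{cor:general-lattice-anti-isomorphism} and Corollary~\ref{cor:full-lattice-anti-isomorphism}(1). The only input needed beyond those results is that nondegeneracy forces $A'\cong\unit$. Then $[A',A]_{\mathrm{alg}}$ is the full lattice $\Lalg{A}$, and the restricted anti-isomorphism becomes the claimed lattice isomorphism $\Lalg{A}^{\mathrm{op}}\cong[\B_A^\loc,\B_A]_\otimes$.

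First, I will show $A'\cong\unit$. By definition, $A'=A\cap\B'=ii^{\ra}(A)$, where $i:\B'\hookrightarrow\B$ is the inclusion. Nondegeneracy means $\B'\simeq\Vect$, i.e.\ $\B'$ is the tensor subcategory of direct sums of $\unit$. So $A'$ is the largest subobject of $A$ lying in $\B'$, namely the trace of $\unit$ in $A$. This is $\unit^{\oplus n}$ with $n=\dim\Hom_\B(\unit,A)$.

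It remains to see that $n=1$. Since $\Hom_\B(\unit,A)\cong\Hom_{\B_A}(A,A)$ via the free-module adjunction, and $A$ is the tensor unit of the finite tensor category $\B_A$ (Theorem~\ref{thm:central-simple-module-category}), the unit is simple and this Hom space is $\kk$. Hence $A'\cong\unit$, and it is a subalgebra via the unit $u_A$. Alternatively, one can cite Theorem~\ref{thm:transparent-local-inputs}(1): $A'$ is a commutative simple subalgebra lying in $\B'=\Vect$, and a simple commutative algebra in $\Vect$ over an algebraically closed field with $\dim\Hom(\unit,-)=1$ is $\kk$.

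With $A'=\unit$, every $B\in\Lalg{A}$ contains $A'$, since the unit map factors through any subalgebra. Thus $[A',A]_{\mathrm{alg}}=\Lalg{A}$. Corollary~\ref{cor:general-lattice-anti-isomorphism} then gives mutually inverse order-reversing bijections $\alpha,\beta$ between $[\B_A^\loc,\B_A]_\otimes$ and $\Lalg{A}$. The meet and join formulas there show that $\beta$ turns $\cap$ into $\langle-,-\rangle_\otimes$ and $\langle-,-\rangle_{\mathrm{alg}}$ into $\cap$. Equivalently, $\beta$ is a lattice isomorphism from $\Lalg{A}^{\mathrm{op}}$ with inverse $\alpha$. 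It remains to note that the image of $\beta$ lies in the interval, which is Lemma~\ref{lem:B-A-loc-in-TB}(1) together with Lemma~\ref{lem:T-beta-same}.

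The only step needing care is identifying $A'$ with $\unit$. Everything else is bookkeeping from the preceding corollaries.
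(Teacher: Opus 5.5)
Your proposal is correct and follows the paper's route. The paper notes that nondegeneracy gives $A'=A\cap\B'\cong\unit$ and then cites Corollary~\ref{cor:full-lattice-anti-isomorphism}(1), which amounts to your use of Corollary~\ref{cor:general-lattice-anti-isomorphism} with $[A',A]_{\mathrm{alg}}=\Lalg{A}$; your argument that $\dim\Hom_\B(\unit,A)=\dim\Hom_{\B_A}(A,A)=1$ forces $A'\cong\unit$ just makes explicit a step the paper leaves implicit.
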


\begin{proof}
Since $\B$ is nondegenerate, $A'=A\cap\B'\cong\unit$. The result follows
from Corollary~\ref{cor:full-lattice-anti-isomorphism}(1).
\end{proof}


\subsection{Frobenius extensions}
An extension $B\subseteq A$ of algebras is called a \textit{Frobenius extension} if $\Ind_B^A$ is a Frobenius functor, that is, its left and right adjoint are isomorphic. As a special case, an algebra $A$ is called \emph{Frobenius} if the free module functor $F_A:\B\to\B_A$ is a Frobenius functor.

The construction of $\beta(B)$ already involves the restriction of the
induction functor $\Ind_B^A$ to local $B$-modules. It is therefore natural to
ask whether the Frobenius property of the extension $B\subseteq A$ can be
detected from the tensor category $\beta(B)$. The next theorem gives such a
criterion under the assumption that $\B_B^\loc$ is unimodular.

\begin{theorem}\label{thm:frob-extn-unimodular}
  Let $B\subseteq A$ be a subalgebra of a simple commutative algebra. If $\B_B^\loc$ is unimodular, then $B\subseteq A$ is a Frobenius extension if and only if $\beta(B)$ is unimodular.
\end{theorem}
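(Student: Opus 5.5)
The plan is to reduce the Frobenius property of $\Ind_B^A$ to that of the functor $L:\B_B^\loc\to\beta(B)$, and then use the known criterion for surjective tensor functors. By Shimizu's criterion (and Etingof--Nikshych--Ostrik / Bruguières--Virelizier), a surjective tensor functor between finite tensor categories is Frobenius if and only if it preserves distinguished invertible objects. In particular, a surjective $F:\C\to\D$ with $\C$ unimodular is Frobenius iff $\D$ is unimodular.

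\textbf{Step 1: Frobenius for $\Ind_B^A$ versus for $L$.} The proof of Proposition~\ref{prop:FP-beta-B} identifies $L$ with the free-module functor $\B_B^\loc\to(\B_B^\loc)_A\simeq\beta(B)$. By Lemma~\ref{lem:ind-surjective}, $\Ind_B^A$ is identified with the free-module functor $\B_B\to(\B_B)_A^\sigma\simeq\B_A$. For a central commutative exact algebra, the free-module functor is Frobenius iff the algebra is Frobenius as an algebra in the ambient category. This is the standard fact that $F_A^{\ra}(X)\cong X\otimes A$ and the left adjoint is $X\otimes A^*$ up to twist, so the functor is Frobenius iff $A\cong A^*$ as modules. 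I would therefore show that both conditions are equivalent to the same statement: $A$ (restricted to $B$) is a Frobenius algebra, i.e.\ $A\cong A^*$ as right $A$-modules, equivalently as objects of $\B_B$.

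Since $\Res_B^A(A)$ lies in $\B_B^\loc$ (Lemma~\ref{lem:res-of-local}), and $\B_B^\loc\subseteq\B_B$ is a full tensor subcategory closed under duals, the isomorphism $A\cong A^*$ of $A$-modules can equally be tested in $\B_B^\loc$ or in $\B_B$. Hence $\Ind_B^A$ is Frobenius iff $L$ is Frobenius. An alternative route is to compare adjoints directly: $\Res_B^A$ restricted to $\beta(B)$ is $L^\ra$, and the left adjoint of $\Ind_B^A$ restricts to that of $L$. An isomorphism of adjoints on $\B_A$ restricts, while the converse uses that the adjoint isomorphism is determined by a single module isomorphism $A\to\Hom$-type object, namely $\Res(A)$ versus its dual, which lives in $\B_B^\loc$.

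\textbf{Step 2: apply the unimodularity criterion to $L$.} $L$ is a surjective tensor functor between finite tensor categories. Since $\B_B^\loc$ is unimodular, $L$ is Frobenius iff $L$ sends the distinguished invertible object $\unit$ to the distinguished invertible object of $\beta(B)$, i.e.\ iff $\beta(B)$ is unimodular. Combined with Step 1, this gives the theorem.

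\textbf{Main obstacle.} The delicate point is Step 1: establishing rigorously that the Frobenius property of $\Ind_B^A$ on all of $\B_B$ is detected by its restriction to local modules. I would first try the module-theoretic characterisation (Frobenius iff $A\cong A^*$ in $(\B_B)_A$), taking care that the relevant dual and the twist by the distinguished object of $\B_B$ versus $\B_B^\loc$ agree. If that bookkeeping fails, I would fall back on the formula for the left adjoint of a surjective tensor functor, $F^{\mathrm{la}}\cong D\otimes F^\ra(-)\otimes D'$ with distinguished invertible objects, applied to both $\Ind_B^A$ and $L$ and compared.
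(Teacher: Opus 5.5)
Your proposal is correct and follows essentially the same route as the paper. Like the paper, you identify $L$ and $\Ind_B^A$ with the free-module functors of $\Res_B^A(A)$ over $\B_B^\loc$ and over $\B_B$, note that $\Res_B^A(A)$ is Frobenius in the full tensor subcategory $\B_B^\loc$ iff it is Frobenius in $\B_B$, and then apply the criterion of \cite[Lemma~4.3 and Theorem~4.8]{shimizu2017relative} to the surjective functor $L$, whose source is unimodular. One small slip: the functor with left adjoint $-\otimes A$ and right adjoint $-\otimes A^*$ is the forgetful functor $F_A^{\ra}$, not $F_A$; since a functor is Frobenius iff its adjoint is, your conclusion that $F_A$ is Frobenius iff $A\cong A^*$ as $A$-modules is unaffected.
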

Note that, by \cite[Proposition~4.5]{etingof2004analogue}, $\B_B^\loc$ is unimodular when it is factorizable. Moreover, as being factorizable is equivalent to being nondegenerate \cite{shimizu2019non}, we can use the description $(\B_B^\loc)'\simeq \B'_{B'}$ from \cite[Theorem~1.1(3)]{shimizu2026transparent}, to obtain sufficient conditions for unimodularity of $\B_B^\loc$. For instance, nondegeneracy of $\B$ suffices.  
\begin{proof}
As explained after Lemma~\ref{lem:T-beta-same}, the functor
\[
  L:\B_B^\loc\longrightarrow\beta(B)
\]
is the free functor associated to the algebra $\Res_B^A(A)$ in
$\B_B^\loc$. Thus, $L$ is Frobenius if and only if $\Res_B^A(A)$ is a
Frobenius algebra in $\B_B^\loc$.

By a similar argument, $A\in\B_B$ is Frobenius iff $\Ind_B^A$ is a Frobenius functor. As $\B_B^\loc$ is a tensor full subcategory of $\B_B$ and $\Res_B^A(A)\in\B_B^\loc$, it follows that $A$ is a Frobenius algebra in $\B_B$ iff it is a Frobenius algebra in $\B_B^\loc$. Hence, $\Ind_B^A$ is Frobenius iff $L$ is Frobenius. 

On the other hand, note that $L$ is surjective. Thus, by \cite[Lemma~4.3 and Theorem~4.8]{shimizu2017relative}, it is Frobenius iff
\begin{equation}\label{eq:Frob-unimodular}
  L(D_{\B_B^\loc}) \cong D_{\beta(B)} .
\end{equation}
By assumption, $\B_B^\loc$ is unimodular. Thus, \eqref{eq:Frob-unimodular} is equivalent to the unimodularity of $\beta(B)$.
\end{proof}


\subsection{The Drinfeld center case}
Let $\I_\C:\C\to\Z(\C)$ be the right adjoint of the forgetful functor.
We define the canonical algebra of $\C$ by
\[
  \bA:=\I_\C(\unit_\C)\in\Z(\C).
\]
By \cite[Proposition~6.6]{shimizu2026commutative}, $\bA$ is a
commutative, indecomposable and exact algebra. It is therefore simple by
\cite[Theorem~7.1]{coulembier2025simple}. Moreover,
\begin{equation}\label{eq:canonical-algebra-module-categories}
  \Z(\C)_{\bA}\simeq\C,
  \qquad
  \Z(\C)_{\bA}^{\loc}\simeq\Vect.
\end{equation}
Here the first equivalence is
\cite[Proposition~6.6(c)]{shimizu2026commutative}, and the second is
\cite[Remark~6.10]{shimizu2026commutative}.

\begin{corollary}\label{cor:Z(C)-case}
  Let $\C$ be a finite tensor category. 
  \begin{enumerate}
    \item There is a lattice isomorphism
    $\Lalg{\bA}^{\mathrm{op}}\cong\Lten{\C}$.
    \item The poset of Frobenius extensions $B\subseteq \bA$ is anti-isomorphic to the poset of tensor subcategories of $\C$ that are unimodular.
  \end{enumerate}
\end{corollary}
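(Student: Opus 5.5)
Both parts follow by specializing the general results to $\B=\Z(\C)$ and $A=\bA$. First, $\Z(\C)$ is nondegenerate, since the Drinfeld center of a finite tensor category is factorizable (via \cite{shimizu2019non}, nondegenerate iff factorizable). Second, $\bA$ is a simple commutative algebra in $\Z(\C)$, as recalled above. Hence Theorem~\ref{thm:nondegenerate-lattice-isomorphism} applies and gives a lattice isomorphism
\[
  \beta:\Lalg{\bA}^{\mathrm{op}}\xrightarrow{\sim}[\Z(\C)_{\bA}^{\loc},\Z(\C)_{\bA}]_{\otimes}.
\]

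We then transport this interval along the tensor equivalence $\Z(\C)_{\bA}\simeq\C$ from \eqref{eq:canonical-algebra-module-categories}. A tensor equivalence induces an isomorphism of lattices of tensor subcategories. Under it, $\Z(\C)_{\bA}^{\loc}$ goes to a tensor subcategory equivalent to $\Vect$. Since every tensor subcategory contains the unit and is closed under finite direct sums and subquotients, that subcategory is the minimal one, $\Vect\subseteq\C$. So the interval is all of $\Lten{\C}$, which proves (1).

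For (2), we apply Theorem~\ref{thm:frob-extn-unimodular} to each subalgebra $B\subseteq\bA$. Its hypothesis is that $\Z(\C)_B^{\loc}$ is unimodular. We would check this via the remark following that theorem. Because $\Z(\C)'\simeq\Vect$, we have $B\cap\Z(\C)'\cong\unit$. Then \cite[Theorem~1.1(3)]{shimizu2026transparent} gives $(\Z(\C)_B^{\loc})'\simeq\Z(\C)'_{B'}\simeq\Vect$, so $\Z(\C)_B^{\loc}$ is nondegenerate, hence factorizable, hence unimodular by \cite[Proposition~4.5]{etingof2004analogue}. Consequently, $B\subseteq\bA$ is a Frobenius extension if and only if $\beta(B)$ is unimodular.

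Restricting the isomorphism of (1) to Frobenius extensions then gives the anti-isomorphism onto unimodular tensor subcategories. Unimodularity is intrinsic to a finite tensor category: it says the distinguished invertible object is trivial. It is therefore preserved under the identification $\Z(\C)_{\bA}\simeq\C$. The posets carry the induced orders, so the restricted order-reversing bijection is an anti-isomorphism of posets.

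The only step needing care is the unimodularity of $\Z(\C)_B^{\loc}$ for every $B$, which requires the transparent-subalgebra description of the Müger center of local modules. The rest is formal transport.
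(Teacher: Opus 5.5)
Your proposal is correct and follows the same route as the paper. You specialize Theorem~\ref{thm:nondegenerate-lattice-isomorphism} to $\B=\Z(\C)$ and $A=\bA$, transport the interval along \eqref{eq:canonical-algebra-module-categories}, and apply Theorem~\ref{thm:frob-extn-unimodular}. You also spell out two checks the paper leaves implicit, and both are sound: that the image of $\Z(\C)_{\bA}^{\loc}$ is the minimal tensor subcategory $\Vect\subseteq\C$, and that each $\Z(\C)_B^{\loc}$ is unimodular because $\Z(\C)$ is nondegenerate.
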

\begin{proof}
The category $\Z(\C)$ is nondegenerate. By
\eqref{eq:canonical-algebra-module-categories}, the interval
$[\Z(\C)_{\bA}^{\loc},\Z(\C)_{\bA}]_{\otimes}$ is identified with
$\Lten{\C}$. Thus, part (1) follows from
Theorem~\ref{thm:nondegenerate-lattice-isomorphism}. Part (2) follows from
Theorem~\ref{thm:frob-extn-unimodular}.
\end{proof}

When $\C$ is a fusion category, part~(1) recovers
\cite[Theorem~4.10]{davydov2013witt}.

Let $\M$ be an indecomposable exact left $\C$-module category \cite[\S7.5]{etingof2016tensor}. We denote the dual tensor category as $\C^*_{\M} = \Fun_{\C}(\M,\M)^\rev$. To clarify, in $\C^*_\M$, $F\otimes G = G \circ F$.  Let $S_\M:\Z(\C)\xrightarrow{\ \simeq\ }\Z(\C_\M^*)$ be the Schauenburg equivalence \cite{schauenburg2001monoidal} (see also \cite[Theorem~3.13]{shimizu2020further}), and set
\[
  \Psi_\M:=\U_{\C_\M^*}\circ S_\M,
  \qquad
  \bA_\M:=\Psi_\M^{\ra}(\unit_{\C_\M^*})\in\Z(\C).
\]
We have the following consequence.

\begin{corollary}\label{cor:module-category-adjoint-algebra}
  \begin{enumerate} 
    \item If $\M$ is an indecomposable exact left $\C$-module category, then
    there is a lattice isomorphism
    $\Lalg{\bA_\M}^{\mathrm{op}}\cong\Lten{\C^*_\M}$. 
    \item If $\M$ is an indecomposable exact left $\C$-module category, then Frobenius extensions $B\subseteq \bA_\M$ are anti-isomorphic to tensor subcategories of $\C^*_\M$ that are unimodular. 
  \end{enumerate}
\end{corollary}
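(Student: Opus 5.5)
The plan is to reduce to the Drinfeld center case, Corollary~\ref{cor:Z(C)-case}, applied to the finite tensor category $\C^*_\M$, and transport it along the Schauenburg equivalence. Since $\M$ is indecomposable exact, $\C^*_\M$ is a finite tensor category. Let $\bA^*:=\I_{\C^*_\M}(\unit)\in\Z(\C^*_\M)$ be its canonical algebra. The braided equivalence $S_\M:\Z(\C)\to\Z(\C^*_\M)$ has quasi-inverse $S_\M^{-1}$, which is also its right adjoint. Hence
\[
  \bA_\M=\Psi_\M^{\ra}(\unit)\cong S_\M^{-1}\bigl(\U_{\C^*_\M}^{\ra}(\unit)\bigr)=S_\M^{-1}(\bA^*).
\]
This isomorphism is one of algebras, by \S\ref{ssec:alg-mon-adj}, since $S_\M^{-1}$ is (strong) monoidal.

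In particular, $\bA_\M$ is a simple commutative algebra in the nondegenerate category $\Z(\C)$. A braided equivalence preserves commutativity, simplicity (ideals correspond), and nondegeneracy. Moreover, $S_\M^{-1}$ induces a lattice isomorphism $\Lalg{\bA^*}\cong\Lalg{\bA_\M}$, because subalgebras correspond under an equivalence that preserves monomorphisms. It also induces a tensor equivalence
\[
  \Z(\C)_{\bA_\M}\simeq\Z(\C^*_\M)_{\bA^*}\simeq\C^*_\M,
\]
which restricts to local modules. Using \eqref{eq:canonical-algebra-module-categories}, the local modules satisfy
\[
  \Z(\C)_{\bA_\M}^{\loc}\simeq\Z(\C^*_\M)^{\loc}_{\bA^*}\simeq\Vect.
\]

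For part (1), there are two routes. One is to apply Theorem~\ref{thm:nondegenerate-lattice-isomorphism} directly to $\bA_\M\in\Z(\C)$ and identify the interval $[\Vect,\C^*_\M]_\otimes$ with $\Lten{\C^*_\M}$. The other is to compose Corollary~\ref{cor:Z(C)-case}(1) for $\C^*_\M$ with the isomorphism $\Lalg{\bA^*}\cong\Lalg{\bA_\M}$. For part (2), the Frobenius property of an extension $B\subseteq\bA_\M$ is preserved by the equivalence $S_\M$: induction functors correspond, and being Frobenius is invariant under equivalence. Unimodularity of a tensor subcategory is likewise invariant under tensor equivalence. So part (2) follows from Corollary~\ref{cor:Z(C)-case}(2) applied to $\C^*_\M$, or directly from Theorem~\ref{thm:frob-extn-unimodular}, whose hypothesis holds because $\B_B^\loc$ is unimodular when $\B$ is nondegenerate.

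The main obstacle is the identification $\Psi_\M^{\ra}\cong S_\M^{-1}\circ\I_{\C^*_\M}$ as lax monoidal functors. This identification is needed so that $\bA_\M$ and $S_\M^{-1}(\bA^*)$ agree as algebras and not merely as objects. I would derive it from the uniqueness of right adjoints of the composite $\U_{\C^*_\M}\circ S_\M$ together with the fact that the monoidal structure on a right adjoint is the doctrinal mate. The rest is transport of structure along equivalences.
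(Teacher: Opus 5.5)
Your proposal is correct and follows the paper's argument. You identify $\bA_\M$ with $S_\M^{-1}(\bA^*)$, the transport of the canonical algebra of $\C^*_\M$ along the Schauenburg equivalence, so that subalgebras and Frobenius extensions correspond, and then apply Corollary~\ref{cor:Z(C)-case} to $\C^*_\M$. Your extra care in matching the lax monoidal structures on $\Psi_\M^{\ra}$ and $S_\M^{-1}\circ\I_{\C^*_\M}$ fills in a detail the paper leaves implicit.
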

\begin{proof}
Under $S_\M$, the algebra $\bA_\M$ corresponds to the canonical
algebra of $\C_\M^*$. Thus, $S_\M$ identifies their subalgebras and
their Frobenius extensions. Both claims now follow by applying
Corollary~\ref{cor:Z(C)-case} to $\C_\M^*$.
\end{proof}


\subsection{Example: pointed braided fusion categories}

We now make the full Galois connection and its two fixed-point intervals
explicit in a pointed example. Let $\Gamma$ be a finite abelian group, written
additively, and let
$q:\Gamma\to\kk^\times$ be a quadratic form. The corresponding pointed
braided fusion category is denoted by $\B=\C(\Gamma,q)$; see
\cite[\S2.11]{drinfeld2010braided}.

Write
\[
 b_q(g,x)=\frac{q(g+x)}{q(g)q(x)}
\]
for the associated symmetric bicharacter. For $S\subseteq\Gamma$, set
\[
 S^\perp=\{g\in\Gamma\mid b_q(g,s)=1\text{ for all }s\in S\},
 \qquad
 R=\Gamma^\perp=\operatorname{rad}(b_q).
\]
Denote by $\delta_g$ the simple object indexed by $g\in\Gamma$. The double
braiding between $\delta_g$ and $\delta_x$ is the scalar $b_q(g,x)$. Hence the
M\"uger center of $\B$ is $\C(R,q|_R)$.

Fix a subgroup $H\leq\Gamma$ such that $q|_H=1$. Then $H\leq H^\perp$, and
the object
\[
 A_H=\bigoplus_{h\in H}\delta_h
\]
admits a commutative algebra structure. After choosing an abelian
$3$-cocycle representing $q$, its multiplication is determined by a normalized
$2$-cochain on $H$. We fix one such algebra structure; see
\cite[Lemma~2.9]{shimizu2026ribbon}.

For each subgroup $L\leq H$, let $A_L\subseteq A_H$ be the subalgebra obtained
by restricting the multiplication. Since $A_H$ is multiplicity-free and its
homogeneous multiplication maps are non-zero, every subalgebra of $A_H$ is
of this form for a unique $L\leq H$. Moreover,
\[
 A_H\cap\B'=A_{H\cap R}.
\]
Thus the full lattice $\Lalg{A_H}$ corresponds to all subgroups of $H$,
whereas the distinguished interval $[A_{H\cap R},A_H]_{\mathrm{alg}}$
corresponds to the subgroup interval $[H\cap R,H]$.

We next describe the tensor-subcategory side. For $g\in\Gamma$, set $M_{g+H}=\delta_g\otimes A_H$. 
The simple-current module classification
\cite[Theorems~4.11 and~4.12]{shimizu2026ribbon} shows that $\B_{A_H}$ is
pointed, with simple objects $M_{g+H}$ indexed by $\Gamma/H$, and
\[
 M_{g+H}\otimes_{A_H}M_{x+H}\cong M_{g+x+H}.
\]
In particular, $A_H=M_H$ is simple. By
\cite[Proposition~4.21]{shimizu2026ribbon}, the module $M_{g+H}$ is local if
and only if $g\in H^\perp$. Therefore the quadratic form descends to
\[
 \overline q:H^\perp/H\to\kk^\times,
 \qquad
 \overline q(g+H)=q(g),
\]
and there is a braided equivalence $\B_{A_H}^{\loc}\simeq\C(H^\perp/H,\overline q)$.  In the nondegenerate characteristic-zero case, this also appears in
\cite[Proposition~5.17]{davydov2013witt}.

For $H\leq K\leq\Gamma$, let
\[
 \E_K=\langle M_{g+H}\mid g\in K\rangle
 \subseteq \B_{A_H}.
\]

\begin{proposition}\label{prop:pointed-galois-connection}
The assignments $L\mapsto A_L$ and $K\mapsto\E_K$ give identifications
\[
\begin{aligned}
 \Lalg{A_H}
 \cong\{L\mid L\leq H\}\qquad\quad \,\; ,
 & \quad
 [A_{H\cap R},A_H]_{\mathrm{alg}}
 \cong\{L\mid H\cap R\leq L\leq H\},\\
 \Lten{\B_{A_H}}
 \cong\{K\mid H\leq K\leq\Gamma\} \quad,
 & \quad
 \;\;\,[\B_{A_H}^{\loc},\B_{A_H}]_{\otimes}
 \cong\{K\mid H^\perp\leq K\leq\Gamma\}.
\end{aligned}
\]
Under these identifications, the full Galois connection is given by
\begin{equation}\label{eq:pointed-galois-maps}
 \beta(A_L)=\E_{L^\perp},
 \qquad
 \alpha(\E_K)=A_{H\cap K^\perp}.
\end{equation}
Its closure operators are
\begin{equation}\label{eq:pointed-galois-closures}
 \alpha\beta(A_L)=A_{L+(H\cap R)},
 \qquad
 \beta\alpha(\E_K)=\E_{K+H^\perp}.
\end{equation}
\end{proposition}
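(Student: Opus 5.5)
The plan is to establish the four identifications first, then compute $\beta$ directly from the locality description $\T_B$. The formula for $\alpha$ will then be forced by the Galois relation, and the closure operators will follow by lattice arithmetic.

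\textbf{Step 1: the identifications.} For subalgebras, the identification $\Lalg{A_H}\cong\{L\le H\}$ and the equality $A_H\cap\B'=A_{H\cap R}$ were noted before the statement. This gives the two algebra-side identifications.

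For tensor subcategories: $\B_{A_H}$ is pointed with group of simples $\Gamma/H$, and every object has finite length. A tensor subcategory is closed under subquotients, sums, tensor products and duals. Hence it is determined by the set of simples it contains, and this set is a subgroup of $\Gamma/H$, i.e.\ $K/H$ for a unique $H\le K\le\Gamma$. Conversely, each such $K$ gives $\E_K$, consisting of objects all of whose composition factors lie in $\{M_{g+H}\}_{g\in K}$.

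Since $M_{g+H}$ is local iff $g\in H^\perp$, and $\B_{A_H}^\loc$ is a tensor subcategory, we get $\B_{A_H}^\loc=\E_{H^\perp}$. (Nonsimple local objects are fine, because subcategories are determined by their simples.) Therefore the interval corresponds to $K\supseteq H^\perp$.

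\textbf{Step 2: $\beta(A_L)=\E_{L^\perp}$.} By Lemma~\ref{lem:T-beta-same}, $\beta(A_L)=\T_{A_L}$, which is a tensor subcategory. It therefore suffices to decide which simples $M_{g+H}$ lie in it.

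As an object, $\Res^{A_H}_{A_L}(M_{g+H})=\bigoplus_{h\in H}\delta_{g+h}$, with right $A_L$-action given by the restricted multiplication. The double braiding $c_{A_L,M}\circ c_{M,A_L}$ acts on the summand $\delta_{g+h}\otimes\delta_l$ by the scalar $b_q(g+h,l)=b_q(g,l)\,b_q(h,l)$. Since $L\le H$ and $q|_H=1$, $b_q$ is trivial on $H\times H$, so this scalar is $b_q(g,l)$. The action maps $\delta_{g+h}\otimes\delta_l\to\delta_{g+h+l}$ are nonzero, because they come from the multiplication on $A_H$ through $M=\delta_g\otimes A_H$. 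Hence locality holds iff $b_q(g,l)=1$ for all $l\in L$, i.e.\ $g\in L^\perp$.

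So the simples of $\beta(A_L)$ are indexed by $L^\perp$, which contains $H$ because $b_q|_{H\times H}=1$. This gives $\beta(A_L)=\E_{L^\perp}$.

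\textbf{Step 3: $\alpha$ and the closures.} By Theorem~\ref{thm:general-galois-connection}, $\alpha(\E_K)$ is the largest $A_L$ with $\E_K\subseteq\beta(A_L)=\E_{L^\perp}$, i.e.\ with $K\subseteq L^\perp$. The condition $K\subseteq L^\perp$ is equivalent to $L\subseteq K^\perp$. So the largest admissible $L$ is $H\cap K^\perp$, and $\alpha(\E_K)=A_{H\cap K^\perp}$.

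For the closures, use Theorem~\ref{thm:general-galois-connection} again:
\[
\alpha\beta(A_L)=\langle A_L,A_{H\cap R}\rangle_{\mathrm{alg}}=A_{L+(H\cap R)},
\qquad
\beta\alpha(\E_K)=\langle\E_K,\E_{H^\perp}\rangle_\otimes=\E_{K+H^\perp}.
\]
Here the join of subalgebras $A_{L_1},A_{L_2}$ is $A_{L_1+L_2}$, since the subalgebra lattice is the subgroup lattice. Likewise the join of $\E_{K_1},\E_{K_2}$ is $\E_{K_1+K_2}$, by the pointed fusion rules.

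As a sanity check, one can compute directly that $(H\cap K^\perp)^\perp\supseteq K+H^\perp$, with equality by finite-group duality whenever $b_q$ is nondegenerate modulo $R$. This is only a check; the theorem already gives the result.

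\textbf{Main obstacle.} The hardest step is the locality computation in Step 2. There I must make sure that the abelian 3-cocycle and 2-cochain contribute no extra scalars beyond $b_q$ to the double braiding composed with the action. I would handle this by noting that the double braiding $c_{Y,X}c_{X,Y}$ on simples is the cocycle-independent scalar $b_q$. The action maps are nonzero scalars that appear identically on both sides of the locality equation, so they cancel.
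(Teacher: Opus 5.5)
Your proof is correct. Steps~1 and~3 coincide with the paper's argument: subcategories are indexed by subgroups $H\le K\le\Gamma$, $\B_{A_H}^\loc=\E_{H^\perp}$, $\alpha$ is forced by the Galois relation, and the closures are read off from Theorem~\ref{thm:general-galois-connection}. The genuine difference is how you compute $\beta(A_L)$.

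The paper works with the definition $\beta(B)=\im(L)$. It uses the simple-current classification for the smaller algebra: the simple local $A_L$-modules are $\delta_g\otimes A_L$ with $g\in L^\perp$, which is the cited locality criterion applied to $L$ in place of $H$. It then uses the isomorphism $(\delta_g\otimes A_L)\otimes_{A_L}A_H\cong M_{g+H}$. You instead use the intrinsic description $\beta(B)=\T_B$ from Lemma~\ref{lem:T-beta-same}. You then test directly which $\Res^{A_H}_{A_L}(M_{g+H})$ are local, via the double-braiding scalar $b_q(g+h,l)=b_q(g,l)$ and the nonvanishing of the homogeneous action maps. Precomposing the locality equation with the inclusion of a summand $\delta_{g+h}\otimes\delta_l$ makes the cancellation rigorous, so the cocycle worry you raise is indeed harmless. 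Your route is more self-contained: it needs only the double braiding on simples, not a classification of local $A_L$-modules. The paper's route is shorter given the citation and tracks the definition of $\beta$ more literally.

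One point to tighten. In Step~1, and again in Step~2 when you reduce to deciding which simples lie in $\T_{A_L}$, you infer ``finite length and closure under subquotients, hence determined by the simples''. This is valid only because $\B_{A_H}$ is semisimple. In a non-semisimple finite tensor category, a tensor subcategory need not be determined by its simple objects. Semisimplicity does hold here: the homogeneous components of $A_H$ act invertibly, so every $A_H$-module is a direct sum of copies of the $M_{g+H}$. The paper relies on the same fact when it treats $\B_{A_H}$ as pointed fusion, so you should simply state it.

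Finally, the caveat in your sanity check is vacuous. The form $b_q$ always induces a perfect pairing on $\Gamma/R$, so $(H\cap K^\perp)^\perp=K+H^\perp$ holds for every $H\le K\le\Gamma$.
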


\begin{proof}
Tensor subcategories of a pointed fusion category are determined by subgroups
of the group of simple objects. Since this group is $\Gamma/H$, all tensor
subcategories of $\B_{A_H}$ are the categories $\E_K$ with
$H\leq K\leq\Gamma$. The local subcategory is $\E_{H^\perp}$, so the closed
interval consists of those $\E_K$ with $H^\perp\leq K\leq\Gamma$. Together
with the description of the subalgebras above, this proves the four displayed
identifications.

We compute $\beta$ before restricting to
$[A_{H\cap R},A_H]_{\mathrm{alg}}$. For every $L\leq H$, the
simple local $A_L$-modules are $\delta_g\otimes A_L$ with $g\in L^\perp$, and
induction gives
\[
 (\delta_g\otimes A_L)\otimes_{A_L}A_H
 \cong \delta_g\otimes A_H=M_{g+H}.
\]
Since $H\leq L^\perp$, the image of induction is the pointed tensor
subcategory indexed by $L^\perp/H$. Therefore $\beta(A_L)=\E_{L^\perp}$ 
for every $L\leq H$.

Let $H\leq K\leq\Gamma$. For every $L\leq H$,
Theorem~\ref{thm:general-galois-connection} gives
\[
 A_L\subseteq\alpha(\E_K)
 \iff \E_K\subseteq\beta(A_L)
 \iff K\leq L^\perp
 \iff L\leq K^\perp.
\]
The largest subgroup $L\leq H$ satisfying the last condition is
$H\cap K^\perp$. Hence $\alpha(\E_K)=A_{H\cap K^\perp}$, which proves
\eqref{eq:pointed-galois-maps} on the full lattices.

The subalgebra generated by $A_L$ and $A_{H\cap R}$ is
$A_{L+(H\cap R)}$, while the tensor subcategory generated by $\E_K$ and
$\E_{H^\perp}$ is $\E_{K+H^\perp}$. The closure formulas in
Theorem~\ref{thm:general-galois-connection} now give
\eqref{eq:pointed-galois-closures}.
\end{proof}

\begin{remark}\label{rem:pointed-positive-characteristic}
No characteristic-zero assumption is used in
Proposition~\ref{prop:pointed-galois-connection}. If
$\operatorname{char}(\kk)=p>0$, then the $p$-primary subgroup of $\Gamma$ is
contained in $R$, since $\kk^\times$ has no non-trivial elements of
$p$-power order. Thus nondegeneracy forces $p\nmid|\Gamma|$, but the formulas
\eqref{eq:pointed-galois-maps} and \eqref{eq:pointed-galois-closures} remain
valid when $q$ is degenerate and $p$ divides $|\Gamma|$. 
\end{remark}

\begin{corollary}
The maps between $\Lalg{A_H}$ and
$[\B_{A_H}^{\loc},\B_{A_H}]_{\otimes}$ are mutually inverse if and only if
$H\cap R=0$. The maps on the two full lattices are mutually inverse if and
only if, in addition, $H^\perp=H$. If $q$ is nondegenerate, the first
correspondence is given by the mutually inverse maps $L\mapsto L^\perp$ and
$K\mapsto K^\perp$.
\end{corollary}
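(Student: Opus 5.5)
The plan is to read off all three assertions from Proposition~\ref{prop:pointed-galois-connection} and Corollary~\ref{cor:full-lattice-anti-isomorphism}, translated into subgroup language.

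\textbf{First claim.} By Corollary~\ref{cor:full-lattice-anti-isomorphism}(1), $\alpha$ and $\beta$ are mutually inverse between $\Lalg{A_H}$ and $[\B_{A_H}^{\loc},\B_{A_H}]_{\otimes}$ if and only if $A'\cong\unit$. Here $A'=A_H\cap\B'=A_{H\cap R}$. Under the identification of subalgebras of $A_H$ with subgroups of $H$, the unit subalgebra is $A_0$. So the condition $A'\cong\unit$ becomes $H\cap R=0$.

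As a cross-check, the closure formula $\alpha\beta(A_L)=A_{L+(H\cap R)}$ from \eqref{eq:pointed-galois-closures} is the identity for all $L\leq H$ exactly when $H\cap R=0$. Indeed, one direction is clear, and for the other one takes $L=0$.

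\textbf{Second claim.} By Corollary~\ref{cor:full-lattice-anti-isomorphism}(2), we additionally need $\B_{A_H}^{\loc}\simeq\Vect$. Since $\B_{A_H}^{\loc}=\E_{H^\perp}$ is pointed with group of simples $H^\perp/H$, it is $\Vect$ precisely when $H^\perp=H$. Equivalently, $\beta\alpha(\E_K)=\E_{K+H^\perp}$ is the identity on all $K\geq H$ iff $H^\perp\leq H$; test this at $K=H$. Together with $H\leq H^\perp$, this gives $H^\perp=H$.

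\textbf{Nondegenerate case.} If $q$ is nondegenerate, then $R=0$, so the first condition holds automatically. By \eqref{eq:pointed-galois-maps}, $\beta(A_L)=\E_{L^\perp}$ and $\alpha(\E_K)=A_{H\cap K^\perp}$. For $K\geq H$ we have $K^\perp\leq H^\perp$, but we need $K^\perp\leq H$.

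The only point needing care is this step. Nondegeneracy of $b_q$ on the finite group $\Gamma$ gives that $\perp$ is an inclusion-reversing involution on subgroups, $S^{\perp\perp}=S$. This follows from the standard order count $|S^\perp|=|\Gamma|/|S|$. Hence $K\geq H^\perp$ implies $K^\perp\leq H^{\perp\perp}=H$, so $H\cap K^\perp=K^\perp$ on the interval $[H^\perp,\Gamma]$.

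The maps are therefore $L\mapsto L^\perp$ and $K\mapsto K^\perp$, and they are mutually inverse by the involution property.
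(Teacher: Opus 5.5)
Your proposal is correct and follows essentially the paper's route. The first two claims are read off from the closure formulas $\alpha\beta(A_L)=A_{L+(H\cap R)}$ and $\beta\alpha(\E_K)=\E_{K+H^\perp}$, and your detour through Corollary~\ref{cor:full-lattice-anti-isomorphism} is equivalent. The nondegenerate case reduces to $H\cap K^\perp=K^\perp$ for $K\geq H^\perp$, where the paper simply asserts $K^\perp\leq H$ and you justify it via $S^{\perp\perp}=S$ from $|S^\perp|=|\Gamma|/|S|$; that filling-in is valid because nondegeneracy forces $\operatorname{char}\kk\nmid|\Gamma|$.
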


\begin{proof}
The first two claims follow from \eqref{eq:pointed-galois-closures}. If $q$ is
nondegenerate, then $R=0$ and $K^\perp\leq H$ whenever
$H^\perp\leq K$. Hence $H\cap K^\perp=K^\perp$, and
\eqref{eq:pointed-galois-maps} becomes the stated pair of maps.
\end{proof}


\section{Hopf ideals and normal coideal subalgebras}\label{sec:hopf-ideals}


Let $\mathcal{B}$ be a braided tensor category, and let $H$ be a Hopf algebra in $\mathcal{B}$ with multiplication $m_H : H \otimes H \to H$, unit $u_H : \unit \to H$, comultiplication $\Delta_H : H \to H \otimes H$, counit $\varepsilon_H : H \to \unit$ and antipode $S_H : H \to H$. To state the main result of this section, we recall some terminology from the Hopf algebra theory. A {\em Hopf ideal} of $H$ is a two-sided ideal $J$ of the algebra $H$ such that $\Delta_H(J) \subseteq H \otimes J + J \otimes H$\footnote{Here, the right hand side is the image of
$(H\otimes J)\oplus(J\otimes H)\longrightarrow H\otimes H$ induced by the two inclusions.}, $\varepsilon_H(J) = 0$ and $S_H(J) \subseteq J$. A {\em left coideal subalgebra} of $H$ is a subalgebra of $H$ that is simultaneously a left $H$-subcomodule of $H$, where $H$ is viewed as a left $H$-comodule by $\Delta_H$. A left coideal subalgebra of $H$ is said to be {\em normal} if it is closed under the left adjoint action $\triangleright_{\mathrm{ad}}$ of $H$ (see \eqref{eq:braided-adjoint-action} for the definition). In this section, we prove:

\begin{theorem}
  \label{thm:CSA-Hopf-ideal-bijection}
  Let $\mathcal{B}$ be a braided finite tensor category, and let $H$ be a Hopf algebra in $\mathcal{B}$. Then there are well-defined order-preserving bijections
  \begin{equation*}
    \begin{tikzcd}
      \{ \text{Hopf ideals of $H$} \}
      \arrow[r, shift left=1.2, "{\mathcal{F}}"]
      & \{ \text{normal left coideal subalgebras of $H$} \}
      \arrow[l, shift left=1.2, "{\mathcal{G}}"]
    \end{tikzcd}
  \end{equation*}
  given by $\mathcal{F}(J) = H^{\mathrm{co}(H/J)}$ and $\mathcal{G}(K) = H K^{+}$.
\end{theorem}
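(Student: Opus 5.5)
We plan to deduce the theorem from Theorem~\ref{thm:nondegenerate-lattice-isomorphism}, applied to a suitable nondegenerate braided category and simple commutative algebra. Set $\C:={}_H\B$, the finite tensor category of left $H$-modules in $\B$. Its Drinfeld center contains the Yetter--Drinfeld category ${}^H_H\mathcal{YD}(\B)$; more precisely, we will use the known equivalence $\Z({}_H\B)\simeq {}^H_H\mathcal{YD}(\B)\boxtimes$-type description relative to $\B$. The cleanest route is to take the nondegenerate category $\Z(\C)$ and the algebra $\bA=\I_\C(\unit)$, and use Corollary~\ref{cor:Z(C)-case}(1): $\Lalg{\bA}^{\mathrm{op}}\cong\Lten{{}_H\B}$. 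But we only want the interval $[\B,{}_H\B]_\otimes$, where $\B\hookrightarrow{}_H\B$ via the trivial action through $\varepsilon_H$. So instead we work relative to $\B$: the adjoint algebra $\mathbf{A}=H$ with the adjoint action and the coaction $\Delta_H$ is a commutative algebra in ${}^H_H\mathcal{YD}(\B)\simeq\Z(\B;{}_H\B)$, it equals $\I_\B(\unit)$ for the forgetful functor $\U_\B:\Z(\B;{}_H\B)\to{}_H\B$, and the argument of Corollary~\ref{cor:Z(C)-case} should give $\Z(\B;{}_H\B)_{\mathbf{A}}\simeq{}_H\B$ and an identification of $\Z(\B;{}_H\B)_{\mathbf A}^{\loc}$ with $\B$ (up to the half-braiding data). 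Simplicity of $\mathbf A$ follows as for $\bA$ from exactness/indecomposability.

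The key steps, in order, are: (1) identify subalgebras of $\mathbf{A}$ in ${}^H_H\mathcal{YD}(\B)$ with normal left coideal subalgebras of $H$ (a subobject closed under multiplication, the coaction $\Delta_H$, and the adjoint action is precisely a normal left coideal subalgebra); (2) identify $[\B,{}_H\B]_\otimes$ with Hopf ideals, sending $J$ to the tensor subcategory ${}_{H/J}\B$ of modules annihilated by $J$, which is closed under tensor products and duals exactly because $J$ is a bi-ideal stable under $S_H$, and conversely recovering $J$ as the annihilator of the subcategory (using a finite-dimensional-style reconstruction: a tensor subcategory containing $\B$ is ${}_{Q}\B$ for a quotient Hopf algebra $Q$, by Tannakian reconstruction relative to $\B$ via the surjective tensor functor to the subcategory's fiber functor); (3) verify the relevant Müger-type degeneracy vanishes, i.e.\ $\mathbf{A}\cap(\text{center})\cong\unit$, so that every subalgebra is closed; here we would check the relevant relative center is nondegenerate relative to $\B$, or directly that $\alpha\beta$ is the identity; (4) compute the Galois maps explicitly: for $K$, $\beta(K)$ consists of $A$-modules whose restriction to $K$ is local, which translates to $H$-modules on which $K^+$ acts by zero, i.e.\ modules over $H/HK^+$, giving $\mathcal G$; and $\alpha({}_{H/J}\B)$ is the largest subalgebra whose locality condition holds for all $H/J$-modules, which by Lemma~\ref{lem:restriction-locality} becomes the coinvariants $H^{\mathrm{co}(H/J)}$. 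Since both Galois maps are order-reversing and the Hopf-ideal identification is order-reversing, $\mathcal F,\mathcal G$ are order-preserving.

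The main obstacle is step (2)/(3): establishing that the interval $[\Z(\B;{}_H\B)_{\mathbf A}^{\loc},\Z(\B;{}_H\B)_{\mathbf A}]_\otimes$ is exactly $[\B,{}_H\B]_\otimes$ and that the transparent part of $\mathbf A$ is trivial, since $\Z(\B;{}_H\B)$ need not be nondegenerate when $\B$ is degenerate. I would first try to avoid this by working in the nondegenerate $\Z({}_H\B)$ with $\bA$, and note that tensor subcategories containing $\B$ correspond under Corollary~\ref{cor:Z(C)-case}(1) to subalgebras of $\bA$ contained in $\alpha(\B)$, and then show $\alpha(\B)$ is the image of the adjoint algebra $\mathbf A$, so that its subalgebras in $\Z({}_H\B)$ are exactly YD-subalgebras of $\mathbf A$, i.e.\ normal left coideal subalgebras.
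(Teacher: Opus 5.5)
Your overall architecture matches the paper's: the adjoint algebra $\mathbf A=R(\unit)$ in ${}^H_H\mathcal{YD}(\B)$, the equivalence ${}_H\B\simeq{}^H_H\mathcal{YD}(\B)_{\mathbf A}$, subalgebras of $\mathbf A$ as normal left coideal subalgebras, $[\B,{}_H\B]_\otimes$ as Hopf ideals, and $\beta(K)$ via Lemma~\ref{lem:restriction-locality}. The gap is the step that turns the Galois connection into bijections, which you leave open. You never show that every subalgebra of $\mathbf A$ is closed, i.e.\ that $\mathbf A\cap({}^H_H\mathcal{YD}(\B))'\cong\unit$. Without this, Theorem~\ref{thm:general-galois-connection} only matches the tensor interval with $[\mathbf A',\mathbf A]_{\mathrm{alg}}$.

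The idea you are missing is a simple reduction. Embed $\B$ into a nondegenerate braided finite tensor category $\widetilde\B$ such as $\Z(\B)$. Since $\B\subseteq\widetilde\B$ is closed under subobjects, $H$ has the same Hopf ideals and the same normal left coideal subalgebras in either category. Moreover ${}^H_H\mathcal{YD}(\widetilde\B)$ is nondegenerate, so Theorem~\ref{thm:nondegenerate-lattice-isomorphism} applies as is.

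Both of your suggested alternatives look completable, though neither appears in the proposal. For the direct claim: ${}^H_H\mathcal{YD}(\B)$ should embed fully faithfully in the nondegenerate $\Z({}_H\B)$ as the centralizer of a copy of $\oB$. Its M\"uger center would then be $\B'$, realized as trivial Yetter--Drinfeld modules. A Yetter--Drinfeld subobject $X\subseteq\mathbf A$ with trivial coaction satisfies $i_X=u_H\varepsilon_H i_X$, so $X\subseteq\unit$. For your fallback through $\bA\in\Z({}_H\B)$, one would check $\widetilde U(\mathbf A)\subseteq\alpha(\B)$ and compare dimensions, both of which should equal $\FP(H)$.

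Your identification ${}^H_H\mathcal{YD}(\B)\simeq\Z(\B;{}_H\B)$ with $\mathbf A=\I_\B(\unit)$ is false for the relative centers used here. Compare $\FP(\Z(\B;{}_H\B))=\FP(\B)^2\FP(H)$ with $\FP({}^H_H\mathcal{YD}(\B))=\FP(\B)\FP(H)^2$; also $\FP(\I_\B(\unit))=\FP(\B)$. For $H=\unit$ you would get the canonical algebra of $\B$, whose local modules form $\Vect$ rather than $\B$. Work with the central surjective functor $U:{}^H_H\mathcal{YD}(\B)\to{}_H\B$ instead.

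Step (4) also skips what the stated formulas need. Locality identifies $\beta(K)$ with the $H$-modules annihilated by $K^+$, i.e.\ with ${}_{H/I}\B$ for the two-sided ideal $I$ generated by $K^+$. Step (2) then makes $I$ a Hopf ideal. To get $\mathcal{G}(K)=HK^+$, however, you must prove that the left ideal $HK^+$ is already two-sided, $HK^+=K^+H$. The paper deduces this from commutativity of $\mathbf A$ and naturality of $\Sigma_{\mathbf A,-}$.

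Similarly, $\alpha({}_{H/J}\B)$ is the largest normal left coideal subalgebra $K$ with $K^+\subseteq J$. Identifying it with $H^{\mathrm{co}(H/J)}$ requires two facts:
\begin{enumerate}
  \item $K^+\subseteq J$ forces $K\subseteq H^{\mathrm{co}(H/J)}$, which uses the coideal property;
  \item $H^{\mathrm{co}(H/J)}$ is itself a normal left coideal subalgebra, which is a genuine computation with the Yetter--Drinfeld condition (Lemma~\ref{lem:Hopf-ideal-to-CSA}).
\end{enumerate}
Lemma~\ref{lem:restriction-locality} alone does not give these. The paper packages both as the monotone Galois connection $K\subseteq\mathcal{F}(J)\iff\mathcal{G}(K)\subseteq J$ (Proposition~\ref{prop:Galois-conn-CSA-Hopf-ideal}). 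Once $\mathcal{G}$ is bijective, $\mathcal{F}=\mathcal{G}^{-1}$ follows automatically.
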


The notation appearing in the definitions of $\mathcal{F}$ and $\mathcal{G}$ is a natural generalization to our setting of the standard notation used in Hopf algebra theory. For further details, see \S\ref{subsec:braided-Hopf-Gal-conn}.

For an ordinary Hopf algebra $H$, Takeuchi established a bijection between certain classes of Hopf ideals of $H$ and certain classes of normal coideal subalgebras $H$ \cite{takeuchi1994quotient}. 
For $\B = \Vect$ (the category of finite-dimensional vector spaces over $\kk$), the bijection of Theorem \ref{thm:CSA-Hopf-ideal-bijection} is obtained from Takeuchi's correspondence and Skryabin's results on faithful (co)flatness over coideal subalgebras \cite{skryabin2007projectivity}. Angiono and Campagnolo have established the bijection for the case where $\B$ is the category of Yetter-Drinfeld modules over a finite-dimensional Hopf algebra \cite[Proposition 2.1]{angiono2025posets}. 
Our results require the strong assumption of finiteness, but within the finite setting, it substantially generalizes known results.

Before we start the proof, we note the following two questions:

\begin{question}
    After suitable modifications, can we extend Theorem \ref{thm:CSA-Hopf-ideal-bijection} to the case where $\B$ is an ind-completion of a braided tensor category? An affirmative answer recovers Takeuchi's correspondence as the case where $\B = \Vect$.
\end{question}

\begin{question}
    Takeuchi’s correspondence also gives a bijection between the set of (not necessarily normal) coideal subalgebras of $H$ satisfying a certain condition and the set of $H$-module factor coalgebras of $H$ satisfying a certain condition; see, {\it e.g.}, \cite{skryabin2025takeuchi} for an exposition. Can we interpret, formulate and verify the correspondence in the setting of tensor categories?
\end{question}

\subsection{Modules and comodules over a Hopf algebra}

We first fix the notation. Let $H$ be a Hopf algebra in a braided monoidal category $\B$ with braiding $c$.
Given a left (right) $H$-module $M$, we denote by $a^l_M$ ($a^r_M$) the left (right) action of $H$ on $M$.
Similarly, given a left (right) $H$-comodule $M$, we denote by $\delta^l_M$ ($\delta^r_M$) the left (right) coaction of $H$ on $M$. We follow the convention of \cite{etingof2016tensor} for dual objects in a monoidal category. Thus, a left dual object of an object $X$ of a monoidal category is an object $X^*$ coming equipped with the evaluation $\mathrm{ev}_X : X^* \otimes X \to \unit$ and the coevaluation $\mathrm{coev}_X : \unit \to X \otimes X^*$.

There are four monoidal categories ${}_H\B$, $\B_H$, ${}^H\B$ and $\B^H$ of left modules, right modules, left comodules and right comodules over $H$, respectively. For example, if $X, Y \in {}_H\B$, then the tensor product $X \otimes Y$ is also a left $H$-module by the action given by
\begin{equation}
\label{eq:braided-Hopf-tensor-module-action}
    a^l_{X \otimes Y}
    = (a^l_X \otimes a^l_Y) (\id_H \otimes c_{H,X} \otimes \id_Y) (\Delta_H \otimes \id_X \otimes \id_Y).
\end{equation}

If the underlying object of $X \in {}_H\B$ has a left dual object in $\B$, then $X^*$ is in fact a left dual object in ${}_H\B$ by the action given by
\begin{equation}
\label{eq:braided-Hopf-dual-module-action}
    a^l_{X^*} =
    (\mathrm{ev}_X \otimes \id_{X^*}) (\id_{X^*} \otimes a^l_X \otimes \id_{X^*})
    (c_{H,X^*} \otimes \mathrm{coev}_X) (S_H \otimes \id_{X^*}).
\end{equation}
Provided that the underlying object of $X$ has a right dual and $S_H$ is invertible, a right dual object of $X$ in ${}_H\B$ is constructed based on ${}^*\!X \in \B$ in a similar manner, but by using the inverse of $S_H$ instead of $S_H$.

\subsection{Yetter-Drinfeld modules}

Next, we recall the notion of Yetter-Drinfeld module, which plays a crucial role in the proof of Theorem \ref{thm:CSA-Hopf-ideal-bijection}.
Let $\mathcal{B}$ be a braided monoidal category with braiding $c$, and let $H$ be a Hopf algebra in $\mathcal{B}$ with structure morphisms denoted as above.

\begin{definition}[\cite{MR1492897}]
  A (left-left) {\em Yetter-Drinfeld module over $H$} is a left $H$-module $M$ endowed with a structure of a left $H$-comodule such that the Yetter-Drinfeld condition
\begin{equation}
  \label{eq:braided-YD-condition-0}
  \begin{aligned}
    & (m_H \otimes a^l_M) (\id_H \otimes c_{H,H} \otimes \id_M) (\Delta_H \otimes \delta^l_M) \\
    & \quad = (m_H \otimes \id_M) (\id_H \otimes c_{M,H})
    (\delta^l_M a^l_M \otimes \id_H) (\id_H \otimes c_{H,M})(\Delta_H \otimes \id_M)
  \end{aligned}
\end{equation}
is satisfied. We denote by ${}^H_H\mathcal{YD}(\mathcal{B})$ the category whose objects are Yetter-Drinfeld modules over $H$ and whose morphisms are $H$-linear $H$-colinear morphisms in $\mathcal{B}$. The category ${}^H_H\mathcal{YD}(\mathcal{B})$ is called the Yetter-Drinfeld category of $H$.
\end{definition}

To shorten expressions, we define $\Xi_{P,Q,X}:P\otimes Q\otimes X \to Q\otimes P\otimes X$ as
\begin{align*}
  \Xi_{P,Q,X}
  & = (a^{lr}_Q \otimes \id_P \otimes \id_X)
  \circ (\id_H \otimes \id_Q \otimes c_{P \otimes X, H}) \\
  & \quad \circ (\id_H \otimes c_{P,Q} \otimes \id_X \otimes S_H) \\
  & \quad \circ (\id_H \otimes \id_P \otimes c_{H, Q \otimes X})
    \circ (\delta^{lr}_{P} \otimes \id_Q \otimes \id_X)
\end{align*}
for an $H$-bicomodule $P$, an $H$-bimodule $Q$ and $X \in \B$, where
\begin{equation*}
  a^{lr}_{Q} = a^l_{Q} (\id_H \otimes a^r_Q)
  = a^r_Q(a^l_Q \otimes \id_H) : H \otimes Q \otimes H \to Q
\end{equation*}
is the `two-sided action' on $Q$ and $\delta^{lr}_P : P \to H \otimes P \otimes H$ is the `two-sided coaction' on $P$ defined in a dual way as $a^{lr}_P$. After a slight modification of \cite[Lemma 2.2]{laugwitz2019comodalg} using the naturality of the braiding, we find that the Yetter-Drinfeld condition \eqref{eq:braided-YD-condition-0} is equivalent to
\begin{equation}
    \label{eq:braided-YD-condition}
    \delta^l_M \circ a^l_M = (\id_H \otimes a^l_M) \circ \Xi_{H,H,M} \circ (\id_H \otimes \delta^l_M),
\end{equation}
where $H$ is viewed as an $H$-bi(co)module by the (co)multiplication of $H$.

The category ${}^H_H\mathcal{YD}(\mathcal{B})$ has a monoidal structure inherited from the monoidal categories ${}_H\mathcal{B}$ and ${}^H\mathcal{B}$.
Moreover, it has a lax braiding $\Sigma$ given by
\begin{equation}
  \label{eq:YD-braiding}
  \Sigma_{M,N} = (a^l_N \otimes \id_M) (\id_H \otimes c_{M,N}) (\delta^l_M \otimes \id_N)
\end{equation}
for $M, N \in {}^H_H\mathcal{YD}(\mathcal{B})$. If the antipode $S_H$ is invertible, then the lax braiding $\Sigma$ is invertible \cite[theorem 4.1.1]{MR1492897}, and thus it is a braiding.

We can realize ${}^H_H\mathcal{YD}(\mathcal{B})$ as the category of comodules over a comonad on ${}_H\mathcal{B}$ in a similar way as Schauenburg did for quasi-Hopf algebras in \cite{MR1897403}. To be precise, for $M \in {}_H\mathcal{B}_H$ and $X \in {}_H\mathcal{B}$, we define $M \triangleright X \in {}_H\mathcal{B}$ as the object $M \otimes X$ endowed with the left $H$-action
\begin{equation*}
  a^{l}_{M \triangleright X} := (\id_M \otimes a^{l}_X) \circ \Xi_{H,M,X} : H \otimes M \otimes X \to M \otimes X.
\end{equation*}
The category ${}_H\mathcal{B}_H$ has a monoidal structure inherited from ${}_H\mathcal{B}$ and $\mathcal{B}_H$. 
One can verify that the operation $\triangleright$ makes ${}_H\mathcal{B}$ a left module category over the monoidal category ${}_H\mathcal{B}_H$ with the module associator
\begin{equation*}
  \id_{M \otimes N \otimes X} : M \triangleright (N \triangleright X)
  \to (M \otimes N) \triangleright X
  \quad (M, N \in {}_H\mathcal{B}_H, X \in {}_H\mathcal{B}).
\end{equation*}
Hence each coalgebra $C$ in ${}_H\mathcal{B}_H$ defines a comonad $C \triangleright (-)$ on ${}_H\mathcal{B}$. It follows from the definition of a Hopf algebra that $H$ is a coalgebra in ${}_H\mathcal{B}_H$. Thus we have the comonad $Z := H \triangleright (-)$ on ${}_H\mathcal{B}$.

\begin{lemma}
  The category ${}^H_H\mathcal{YD}(\mathcal{B})$ is identified with the category of comodules over the comonad $Z$ in a way compatible with the forgetful functor to ${}_H\mathcal{B}$.
\end{lemma}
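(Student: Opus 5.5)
Here I show that a comodule structure for the comonad $Z=H\triangleright(-)$ on an object $X\in{}_H\B$ is the same thing as a left $H$-comodule structure on $X$ satisfying the Yetter--Drinfeld condition in the form \eqref{eq:braided-YD-condition}. Morphisms match as well, and both forgetful functors to ${}_H\B$ are the identity on underlying data.

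First, I unpack the comonad. Its comultiplication and counit come from the coalgebra structure $(\Delta_H,\varepsilon_H)$ of $H$ in ${}_H\B_H$, via the module associator, which is an identity. So $\Delta_{Z,X}=\Delta_H\otimes\id_X:H\otimes X\to H\otimes H\otimes X$ and $\varepsilon_{Z,X}=\varepsilon_H\otimes\id_X$.

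A $Z$-comodule is a morphism $\delta:X\to Z(X)=H\triangleright X$ in ${}_H\B$ satisfying coassociativity and counitality with respect to these maps. The underlying morphism $\delta:X\to H\otimes X$ is therefore the same data as a left $H$-coaction on $X$ in $\B$. Coassociativity for the comonad is literally $(\Delta_H\otimes\id_X)\delta=(\id_H\otimes\delta)\delta$, and counitality is $(\varepsilon_H\otimes\id_X)\delta=\id_X$. These are exactly the left $H$-comodule axioms.

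The remaining condition is that $\delta$ is $H$-linear from $X$ to $H\triangleright X$. By the definition of $a^l_{H\triangleright X}=(\id_H\otimes a^l_X)\circ\Xi_{H,H,X}$, this says
\[
\delta\circ a^l_X=(\id_H\otimes a^l_X)\circ\Xi_{H,H,X}\circ(\id_H\otimes\delta),
\]
which is precisely \eqref{eq:braided-YD-condition}. That equation is equivalent to the Yetter--Drinfeld condition \eqref{eq:braided-YD-condition-0}, by the modification of \cite[Lemma 2.2]{laugwitz2019comodalg} recalled above. Hence $Z$-comodules and Yetter--Drinfeld modules have the same objects.

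For morphisms, a morphism of $Z$-comodules is an $H$-linear $f$ with $(\id_H\otimes f)\delta_X=\delta_Y f$. Here I use that $Z(f)=\id_H\otimes f$, which follows because $\triangleright$ is defined on underlying objects as $\otimes$. This condition is exactly $H$-linearity together with $H$-colinearity, which is the definition of a morphism in ${}^H_H\mathcal{YD}(\B)$. Both categories' forgetful functors to ${}_H\B$ send $(X,\delta)$ to $X$, so the identification is compatible with them.

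The main obstacle is checking that $Z$ really is a comonad on ${}_H\B$. Concretely, one must show that $H$ is a coalgebra in ${}_H\B_H$ with the bimodule structure given by multiplication. One must also show that $\Delta_H\otimes\id_X$ and $\varepsilon_H\otimes\id_X$ are $H$-linear for the twisted actions built from $\Xi$. This is a braided-diagram computation using the bimonoid axiom, the antipode identities, and naturality of the braiding. I would reduce it to verifying that $\Xi_{H,M,X}$ is natural in the bimodule $M$ and compatible with tensor products of bimodules; both facts follow from the definition of $\Xi$ and the bialgebra axioms. I treat this as part of the assertion, made just before the lemma, that ${}_H\B$ is an ${}_H\B_H$-module category and $H$ is a coalgebra in ${}_H\B_H$. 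Given that, the proof above is a direct unpacking of definitions.
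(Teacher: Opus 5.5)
Your proposal is correct and takes essentially the same route as the paper: you unpack a $Z$-comodule structure $\delta: X\to H\triangleright X$ into coassociativity, counitality, and $H$-linearity of $\delta$, and the last condition is literally \eqref{eq:braided-YD-condition}. Your explicit check on morphisms, and your remark that well-definedness of the comonad $Z$ belongs to the setup asserted before the lemma, are fine additions that the paper leaves implicit.
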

\begin{proof}
  We fix a left $H$-module $M$ in $\mathcal{B}$. By the definition of the comonad $Z$, we see that a morphism $\delta : M \to H \otimes M$ in $\mathcal{B}$ makes $M$ a $Z$-comodule if and only if the following three equations are satisfied:
  \begin{gather*}
    (\Delta_H \otimes \id_M) \circ \delta
    = (\id_H \otimes \delta) \circ \delta,
    \quad (\varepsilon_H \otimes \id_M) \circ \delta = \id_M, \\
    \delta \circ a^l_M = a_{H \triangleright M}^l \circ (\id_H \otimes \delta).
  \end{gather*}
  The first two equations are equivalent to that $M$ is a left $H$-comodule in $\mathcal{B}$ with coaction $\delta$. Moreover, the third equation is equivalent to the Yetter-Drinfeld condition \eqref{eq:braided-YD-condition} with $\delta_M^{l} = \delta$. From this observation, we find that the data defining a $Z$-comodule is exactly the same as that defining a Yetter-Drinfeld module over $H$.
\end{proof}

An immediate consequence of the above lemma is:

\begin{lemma}\label{lem:braided-YD-adjoint}
  The cofree $Z$-comodule functor
  \begin{equation*}
    R : {}_H\mathcal{B} \to {}^H_H\mathcal{YD}(\mathcal{B}),
    \quad X \mapsto Z(X)
    \quad (X \in {}_H\mathcal{B})
  \end{equation*}
  is right adjoint to the forgetful functor $U : {}^H_H\mathcal{YD}(\mathcal{B}) \to {}_H\mathcal{B}$.
  The unit and the counit of this adjunction are given respectively by
  \begin{equation*}
    \delta^l_M : M \to R U(M)
    \quad (M \in {}^H_H\mathcal{YD}(\mathcal{B})),
    \quad \varepsilon_H \otimes \id_V : U R(V) \to V
    \quad (V \in {}_H\mathcal{B}).
  \end{equation*}
\end{lemma}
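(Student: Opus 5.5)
The plan is to deduce the statement from the standard adjunction for a comonad. Afterwards we check that, under the identification of the preceding lemma, the abstract unit and counit become the morphisms displayed in the statement.

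Recall the general fact. Let $(Z,\Delta^Z,\epsilon^Z)$ be a comonad on a category $\mathcal{A}$. The forgetful functor from the Eilenberg--Moore category of $Z$-comodules to $\mathcal{A}$ has a right adjoint, the cofree functor $X\mapsto (Z(X),\Delta^Z_X)$. The unit of this adjunction at a comodule $(M,\delta)$ is the coaction $\delta:M\to Z(M)$. The counit at $X$ is $\epsilon^Z_X:Z(X)\to X$. The triangle identities are exactly the counit axiom of a comodule, $\epsilon^Z_M\circ\delta=\id_M$, and the counit axiom of the comonad, $Z(\epsilon^Z_X)\circ\Delta^Z_X=\id_{Z(X)}$. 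Since the preceding lemma identifies ${}^H_H\mathcal{YD}(\mathcal{B})$ with $Z$-comodules compatibly with the forgetful functor to ${}_H\mathcal{B}$, this already gives an adjunction $U\dashv R$. It remains to make it explicit.

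First I will write down the structure maps of the comonad $Z=H\triangleright(-)$. The comonad comes from the coalgebra $H$ in ${}_H\mathcal{B}_H$ through the module-category action $\triangleright$, and the module associator is the identity. Hence the comultiplication of $Z$ at $X\in{}_H\mathcal{B}$ is
\[
  \Delta_H\otimes\id_X : H\triangleright X\to H\triangleright(H\triangleright X),
\]
and the counit is
\[
  \varepsilon_H\otimes\id_X : H\triangleright X\to X,
\]
where we use $\unit\triangleright X=X$. These maps are automatically morphisms in ${}_H\mathcal{B}$. Indeed, $\Delta_H$ and $\varepsilon_H$ are morphisms in ${}_H\mathcal{B}_H$, since $H$ is a coalgebra there with $\unit$ carrying the trivial bimodule structure, and $-\triangleright X$ is a functor on ${}_H\mathcal{B}_H$. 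Consequently the cofree comodule $R(X)=Z(X)$ is the left $H$-module $H\triangleright X$ with left coaction $\Delta_H\otimes\id_X$. By the preceding lemma, this is a Yetter--Drinfeld module.

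Next I transport the abstract unit and counit. The unit at $M\in{}^H_H\mathcal{YD}(\mathcal{B})$ is the $Z$-coaction of $M$, which under the identification is $\delta^l_M$. Its $H$-colinearity into $R U(M)$ is coassociativity of $\delta^l_M$. Its $H$-linearity is the third equation in the proof of the preceding lemma, which is the Yetter--Drinfeld condition \eqref{eq:braided-YD-condition}. The counit at $V\in{}_H\mathcal{B}$ is $\epsilon^Z_V=\varepsilon_H\otimes\id_V$, which is $H$-linear by the previous paragraph. The two triangle identities then reduce to $(\varepsilon_H\otimes\id_M)\delta^l_M=\id_M$ and to $(\id_H\otimes\varepsilon_H\otimes\id_V)(\Delta_H\otimes\id_V)=\id_{H\otimes V}$, which is the counit axiom of $H$.

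The only step needing real care is the bookkeeping in the second paragraph. One must confirm that the action of the coalgebra structure maps of $H$ through $\triangleright$ really produces the plain morphisms $\Delta_H\otimes\id_X$ and $\varepsilon_H\otimes\id_X$ in $\mathcal{B}$, with no braiding factors. This holds because $\triangleright$ is $\otimes$ on underlying objects and the module associator and unitor are identities. Once that is verified, everything else is the formal comonad argument.
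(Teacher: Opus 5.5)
Your proposal is correct and follows the paper's route: the paper states this lemma as an immediate consequence of identifying ${}^H_H\mathcal{YD}(\mathcal{B})$ with $Z$-comodules and then invoking the standard cofree-comodule adjunction. You carry out that argument explicitly. You identify the comonad structure maps as $\Delta_H\otimes\id_X$ and $\varepsilon_H\otimes\id_X$, and you check the triangle identities, neither of which the paper writes out.
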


Since $U$ is a strong monoidal functor (in fact, it is a strict monoidal functor), its right adjoint $R$ is a lax monoidal functor such that the unit and the counit of the adjunction are monoidal natural transformations. The structure morphisms
\begin{equation*}
  R^{(2)}_{V,W} : R(V) \otimes R(W) \to R(V \otimes W)
  \quad \text{and}
  \quad R^{(0)} : \unit \to R(\unit)
\end{equation*}
of $R$ are given as follows:

\begin{lemma}
  For $V, W \in {}_H\mathcal{B}$, we have
  \begin{equation*}
    R^{(2)}_{V,W} = (m_H \otimes \id_V \otimes \id_W) (\id_H \otimes c_{V,H} \otimes \id_W),
    \quad R^{(0)} = u_H.
  \end{equation*}
\end{lemma}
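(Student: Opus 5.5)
The plan is to use the standard description of the lax monoidal structure on the right adjoint of a strong monoidal functor, obtained by doctrinal adjunction, and then compute it with the explicit unit and counit from Lemma~\ref{lem:braided-YD-adjoint}. Since $U$ is strict monoidal, the structure maps of $R$ are the adjuncts of the counits:
\[
  R^{(2)}_{V,W}=R(\varepsilon_V\otimes\varepsilon_W)\circ\eta_{R(V)\otimes R(W)},
  \qquad
  R^{(0)}=\eta_{\unit}.
\]
Here $\eta$ and $\varepsilon$ denote the unit and counit of $U\dashv R$. These are exactly the maps that make $\eta$ and $\varepsilon$ monoidal natural transformations, so they are the structure referred to in the statement.

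\textbf{Step 1: the unit term.} The unit is given by
\[
  \eta_M=\delta^l_M.
\]
Recall two facts:
\begin{itemize}
  \item for $R(V)=H\otimes V$, the left coaction is $\Delta_H\otimes\id_V$;
  \item the tensor product in ${}^H\B$ uses the braided coaction
  \[
    \delta^l_{M\otimes N}=(m_H\otimes\id_M\otimes\id_N)(\id_H\otimes c_{M,H}\otimes\id_N)(\delta^l_M\otimes\delta^l_N).
  \]
\end{itemize}
Combining these, the unit at $R(V)\otimes R(W)=H\otimes V\otimes H\otimes W$ is
\[
  (m_H\otimes\id_{H\otimes V}\otimes\id_{H\otimes W})
  (\id_H\otimes c_{H\otimes V,H}\otimes\id_{H\otimes W})
  (\Delta_H\otimes\id_V\otimes\Delta_H\otimes\id_W).
\]

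\textbf{Step 2: the counit term.} By Lemma~\ref{lem:braided-YD-adjoint}, the counit is $\varepsilon_H\otimes\id$. The map $R(\varepsilon_V\otimes\varepsilon_W)$ is the underlying morphism $\id_H\otimes\varepsilon_V\otimes\varepsilon_W$. After reordering via the identity $U R(V)\otimes UR(W)=U(R(V)\otimes R(W))$, this amounts to applying $\varepsilon_H$ to the second and the fourth tensorand.

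\textbf{Step 3: simplification.} Slide the two counits leftward.
\begin{itemize}
  \item The counit on the fourth factor meets the right-hand $\Delta_H$ directly, and counitality removes it.
  \item The counit on the second factor passes through $c_{H\otimes V,H}$ by naturality of the braiding. It turns $c_{H\otimes V,H}$ into $c_{V,H}$ applied after $\varepsilon_H$ on the first $H$, and then counitality removes the left-hand $\Delta_H$.
\end{itemize}
What remains is $(m_H\otimes\id_V\otimes\id_W)(\id_H\otimes c_{V,H}\otimes\id_W)$, as claimed. For the unit, $\eta_{\unit}$ is the coaction of the trivial Yetter--Drinfeld module $\unit$, which is $u_H$. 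Also $R(\id_{\unit})$ is the identity, so $R^{(0)}=u_H$.

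The only step requiring care is Step 3. There one must keep track of the tensor-product coaction convention in ${}^H\B$, namely which braiding, $c_{M,H}$ or $c_{H,M}^{-1}$. One must also apply naturality of $c$ with respect to $\varepsilon_H$ correctly. If the paper's convention for the comodule tensor product were the other one, the same computation would produce the corresponding braiding in the formula. I would fix the convention by matching it with \eqref{eq:braided-Hopf-tensor-module-action}, dualized.
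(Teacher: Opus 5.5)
Your proposal is correct and follows the paper's own proof. The paper likewise writes $R^{(2)}_{V,W}=R(\tilde{\varepsilon}_V\otimes\tilde{\varepsilon}_W)\circ\tilde{\eta}_{R(V)\otimes R(W)}$ and $R^{(0)}=\tilde{\eta}_{\unit}$, using strictness of $U$, and then calls the verification straightforward; your Step~3 carries out that verification correctly, and the comodule tensor-product convention you chose (with $c_{M,H}$) is indeed the one dual to \eqref{eq:braided-Hopf-tensor-module-action}, so the hedge is unnecessary.
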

\begin{proof}
  Let $\tilde{\eta}$ and $\tilde{\varepsilon}$ denote the unit and the counit of the adjunction $U \dashv R$ given by the previous lemma, respectively. It is known that $R^{(2)}_{V,W}$ and $R^{(0)}$ are given by
  \begin{equation*}
    R^{(2)}_{V,W} = R(\tilde{\varepsilon}_V \otimes \tilde{\varepsilon}_W) \circ \tilde{\eta}_{R(V) \otimes R(W)}
    \quad \text{and} \quad
    R^{(0)} = \tilde{\eta}_{\unit},
  \end{equation*}
  respectively, where $U(R(V) \otimes R(W))$ is identified with $U R(V) \otimes UR(W)$ in the first equation thanks to the strictness of $U$. Now it is straightforward to verify the claim.
\end{proof}

\subsection{Modules and local modules over the adjoint algebra}
\label{subsec:braided-Hopf-adj-alg}

We retain the notation of the previous subsection. We also assume that the antipode $S_H$ is invertible. Then, as we have remarked, ${}^H_H\mathcal{YD}(\mathcal{B})$ is a braided monoidal category with the braiding $\Sigma$ given by \eqref{eq:YD-braiding}. We note that the morphism $\Sigma_{M,N}$ is naturally defined for all $M \in {}^H\mathcal{B}$ and $N \in {}_H\mathcal{B}$, and the braided strong monoidal functor $\widetilde{U} : {}^H_H\mathcal{YD} \to \mathcal{Z}({}_H\mathcal{B})$ is also defined by $\widetilde{U}(M) = (M, \Sigma_{M,-})$. The functor $U$ is the composition of $\widetilde{U}$ and the forgetful functor $\mathcal{Z}({}_H\mathcal{B}) \to {}_H\mathcal{B}$.
Namely, $U$ is {\em central} in the sense of \cite[Definition 8.8.6]{etingof2016tensor}.

Since $R$ is lax monoidal, $\mathbf{A} := R(\unit)$ is an algebra in ${}^H_H\mathcal{YD}(\mathcal{B})$. We call $\mathbf{A}$ the {\em adjoint algebra}.
Since $U$ is central, \cite[Proposition 8.8.8]{etingof2016tensor} shows that $\mathbf{A}$ is in fact a commutative algebra in ${}^H_H\mathcal{YD}(\mathcal{B})$.
By construction, $\mathbf{A} = H$ as an object of $\mathcal{B}$. The left $H$-action, the left $H$-coaction, the multiplication and the unit of $\mathbf{A}$ are given by $\triangleright_{\mathrm{ad}}$, $\Delta_H$, $m_H$ and $u_H$, respectively, where
\begin{equation}
  \label{eq:braided-adjoint-action}
  \begin{gathered}
  \triangleright_{\mathrm{ad}}
  = (\id_H \otimes \varepsilon_H) \Xi_{H,H,\unit}
  = m_H^{(3)} (\id_H \otimes \id_H \otimes S_H) (\id_H \otimes c_{H,H}) (\Delta_H \otimes \id_H) \\
  (m_H^{(3)} = m_H(m_H \otimes \id_H) = m_H (\id_H \otimes m_H))
  \end{gathered}
\end{equation}
is the {\em adjoint action}. Thus we have:

\begin{lemma}
  A normal left coideal subalgebra of $H$ is the same thing as a subalgebra of the adjoint algebra $\mathbf{A}$ in ${}^H_H\mathcal{YD}(\mathcal{B})$.
\end{lemma}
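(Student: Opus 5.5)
We prove the lemma by unpacking the definition of a subalgebra of $\mathbf{A}$ in ${}^H_H\mathcal{YD}(\mathcal{B})$ and comparing it with the three defining properties of a normal left coideal subalgebra. By definition, a subalgebra of $\mathbf{A}$ is an algebra $K$ in ${}^H_H\mathcal{YD}(\mathcal{B})$ together with a monic algebra map $K\to\mathbf{A}$ in ${}^H_H\mathcal{YD}(\mathcal{B})$. We use the explicit description just given: $\mathbf{A}=H$ as an object of $\mathcal{B}$, with left action $\triangleright_{\mathrm{ad}}$, left coaction $\Delta_H$, multiplication $m_H$ and unit $u_H$.

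The key steps are as follows.

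\emph{From a subalgebra of $\mathbf{A}$ to a normal left coideal subalgebra.} The forgetful functor ${}^H_H\mathcal{YD}(\mathcal{B})\to\mathcal{B}$ is faithful and exact, since kernels are computed in $\mathcal{B}$. Hence a monomorphism $K\hookrightarrow\mathbf{A}$ in ${}^H_H\mathcal{YD}(\mathcal{B})$ is a monomorphism in $\mathcal{B}$. Its compatibility with multiplication and unit shows that $K$ is a subalgebra of $H$ in $\mathcal{B}$. Colinearity shows that $\Delta_H$ restricts to $K\to H\otimes K$, so $K$ is a left coideal. Linearity shows that $\triangleright_{\mathrm{ad}}$ restricts to $H\otimes K\to K$, so $K$ is normal.

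\emph{From a normal left coideal subalgebra to a subalgebra of $\mathbf{A}$.} Let $K\subseteq H$ be a normal left coideal subalgebra. Then $K$ is a left $H$-subcomodule of $(H,\Delta_H)$ and is stable under $\triangleright_{\mathrm{ad}}$. We argue as follows:
\begin{enumerate}
  \item The restricted action and coaction make $K$ a subobject of $\mathbf{A}$ in ${}^H_H\mathcal{YD}(\mathcal{B})$. The Yetter--Drinfeld condition \eqref{eq:braided-YD-condition-0} holds on $K$ because it holds on $\mathbf{A}$. Both sides restrict along the monomorphism $H\otimes K\to H\otimes H$; this map is monic since $\otimes$ is exact in a finite tensor category. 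Both sides then land in $H\otimes K\subseteq H\otimes H$, so the identity restricts.
  \item The multiplication $m_H|_{K\otimes K}:K\otimes K\to K$ is a morphism in ${}^H_H\mathcal{YD}(\mathcal{B})$. Here $K\otimes K$ carries the Yetter--Drinfeld structure from \eqref{eq:braided-Hopf-tensor-module-action} and its comodule analogue. This follows because $m_{\mathbf{A}}$ is such a morphism and $K\otimes K\to\mathbf{A}\otimes\mathbf{A}$ is a monomorphism compatible with all structures.
  \item The unit $u_H$ factors through $K$ and is a morphism in ${}^H_H\mathcal{YD}(\mathcal{B})$, for the same reason.
\end{enumerate}

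These two assignments are clearly mutually inverse on subobjects of $H$. They also preserve the order, since morphisms on both sides are just inclusions in $\mathcal{B}$.

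The only point requiring any care is the compatibility of the restricted structures with tensor products in the second direction: that the Yetter--Drinfeld tensor product structure on $K\otimes K$ is the restriction of that on $\mathbf{A}\otimes\mathbf{A}$. This follows from naturality of the braiding $c$ together with exactness of $\otimes$. Everything else is a direct reading-off of definitions.
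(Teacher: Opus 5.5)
Your proposal is correct and takes the same approach as the paper, which states this lemma without further argument as an immediate consequence of the description of $\mathbf{A}$ as $H$ with action $\triangleright_{\mathrm{ad}}$, coaction $\Delta_H$, multiplication $m_H$ and unit $u_H$; you simply spell out that reading-off of definitions. One small correction: in step 2, the monomorphisms you actually cancel are $i_K$ (for linearity) and $\id_H\otimes i_K$ (for colinearity), not $K\otimes K\to\mathbf{A}\otimes\mathbf{A}$, whose monicity plays no role.
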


Roughly speaking, Theorem \ref{thm:CSA-Hopf-ideal-bijection} will be obtained by applying Theorem \ref{thm:general-galois-connection} to the adjoint algebra $\mathbf{A}$. Thus, we now start working on the category of $\mathbf{A}$-modules and the category of local $\mathbf{A}$-modules. Given $V \in {}_H\B$, we denote by $\widetilde{R}(V)$ the object $R(V)$ viewed as a right $\mathbf{A}$-module by the lax monoidal structure of $R$. Then we have a functor
\begin{equation*}
  \widetilde{R} : {}_H\mathcal{B} \to ({}^H_H\mathcal{YD}(\mathcal{B}))_{\mathbf{A}}.
\end{equation*}
When $\B$ is a finite tensor category, ${}^H_H\mathcal{YD}(\mathcal{B})$ is also a braided finite tensor category and $U$ is a surjective tensor functor \cite[Section 6]{shimizu2019non}. Moreover, by \cite[Corollary 6.7]{shimizu2026commutative}, the functor $\widetilde{R}$ is in fact an equivalence of finite tensor categories. 
In view of this observation, we examine when $\widetilde{R}(V)$ is a local module over $\mathbf{A}$ (without assuming that $\B$ is finite).
We discuss this problem in a more general setting: Let $K$ be a normal left coideal subalgebra of $H$. We note that $K$ is a commutative algebra in ${}^H_H\mathcal{YD}(\mathcal{B})$ as a subalgebra of $\mathbf{A}$. Given a right $\mathbf{A}$-module $M$, we denote by $M|_K$ the right $K$-module with action given by $a^r_M(\id_M \otimes i_K)$, where $i_K : K \to \mathbf{A}$ is the inclusion morphism. We now prove:

\begin{lemma}
  \label{lem:local-condition-for-RVK}
  Let $V \in {}_H\mathcal{B}$.
  Then $\widetilde{R}(V)|_K$ is a local $K$-module if and only if
  \begin{equation}
    \label{eq:local-condition-for-RVK}
    a^{l}_V(i_K \otimes \id_V) = \varepsilon_H i_K \otimes \id_V.
  \end{equation}
\end{lemma}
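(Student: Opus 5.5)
The plan is to apply Lemma~\ref{lem:restriction-locality} with $F=U:{}^H_H\mathcal{YD}(\B)\to{}_H\B$, which is central via $\widetilde U$, and $R$ its right adjoint. Then $R(\unit)=\mathbf{A}$, and $\widetilde R(V)|_K$ is exactly $\Res_K^{\mathbf A}(R(V))$ with the module structure coming from $R^{(2)}$. The map $q$ of that lemma is $q=\tilde\varepsilon_{\unit}\circ U(i_K)=\varepsilon_H i_K:K\to\unit$. The half-braiding $\sigma_{K,V}$ is $\Sigma_{K,V}=(a^l_V\otimes\id_K)(\id_H\otimes c_{K,V})(\delta^l_K\otimes\id_V)$, where $\delta^l_K$ is the restriction of $\Delta_H$.

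The counit at $V$ is $\varepsilon_H\otimes\id_V:H\otimes V\to V$. It is split epic, with section $u_H\otimes\id_V$, and this section is a morphism in $\B$. Consequently $\id_{K}\otimes\varepsilon_V$ is split epic in $\B$, hence epic after forgetting to $\B$. Since the equations are equalities of morphisms in $\B$ (faithful forgetful functor), we may cancel it and land in \eqref{eq:restriction-locality-epic}. Thus locality is equivalent to
\[
(\id_V\otimes\varepsilon_H i_K)\circ\Sigma_{K,V}=\varepsilon_H i_K\otimes\id_V .
\]

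Next we simplify the left side. Naturality of $c$ moves $\varepsilon_H i_K$ past the braiding, and the coaction on $K$ is $\Delta_H$ restricted to $K$, with the first tensorand landing in $H$. So the left side is
\[
a^l_V\circ\bigl((\id_H\otimes\varepsilon_H)\Delta_H i_K\otimes\id_V\bigr)=a^l_V(i_K\otimes\id_V)
\]
by the counit axiom, after composing with the symmetry $c_{\unit,V}$, which is trivial. The condition thus becomes exactly \eqref{eq:local-condition-for-RVK}, up to the harmless identification $V\otimes\unit\cong\unit\otimes V$.

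The main obstacle is bookkeeping: checking that the right $\mathbf A$-module structure on $\widetilde R(V)$ induced by $R^{(2)}$ matches the one used in Lemma~\ref{lem:restriction-locality}, and that the half-braiding convention $\sigma_{X,V}:F(X)\otimes V\to V\otimes F(X)$ agrees with $\Sigma_{M,N}$ rather than its inverse. We do not need the finiteness assumptions of that lemma, since its proof only uses the adjunction and the central structure, so it applies in the present not-necessarily-finite setting. If the orientation of $\Sigma$ turned out reversed, we would use $\Sigma^{-1}$ built from $S_H^{-1}$. The counit computation would then instead produce $\varepsilon_H S_H^{-1}=\varepsilon_H$, and the same conclusion would follow.
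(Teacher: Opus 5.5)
Your proposal is correct and follows essentially the same route as the paper's proof. You apply Lemma~\ref{lem:restriction-locality} to the central functor $U$ with right adjoint $R$ and $q=\varepsilon_H i_K$. You cancel $\id_K\otimes\widetilde\varepsilon_V$, which is split epic in $\B$ via $u_H$ and hence epic in ${}_H\B$ by faithfulness, and then reduce $(\id_V\otimes\varepsilon_H i_K)\Sigma_{K,V}$ to $a^l_V(i_K\otimes\id_V)$ by naturality of $c$ and the counit axiom. The hedge about a reversed orientation of $\Sigma$ is unnecessary: $\widetilde U(M)=(M,\Sigma_{M,-})$ already matches the convention $\sigma_{X,V}\colon F(X)\otimes V\to V\otimes F(X)$.
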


\begin{proof}
  We apply Lemma~\ref{lem:restriction-locality} to the central functor $U : {}^H_H\mathcal{YD}(\mathcal{B}) \to {}_H\mathcal{B}$, its right adjoint $R$ given in Lemma~\ref{lem:braided-YD-adjoint}, and the subalgebra $K$.
  In this specialization, the half-braiding is given by $\Sigma_{K,V}$.
  We write $\widetilde\varepsilon$ for the counit of $U\dashv R$ given in Lemma \ref{lem:braided-YD-adjoint}.
  The morphism $q$ of Lemma~\ref{lem:restriction-locality} is then given by  $q=\widetilde\varepsilon_{\unit}\circ U(i_K) =\varepsilon_H i_K$.
  The underlying morphism of $\widetilde\varepsilon_V$ is split epic, with
  section $u_H\otimes\id_V$. Since the forgetful functor to $\mathcal B$
  is faithful, the same argument after tensoring with $K$ shows that
  $\id_K\otimes\widetilde\varepsilon_V$ is epic. Therefore
  \eqref{eq:restriction-locality-epic} gives
  \[
    \widetilde{R}(V)|_K\text{ is local}
    \quad\Longleftrightarrow\quad
    (\id_V\otimes\varepsilon_H i_K)\circ\Sigma_{K,V}
    =(\varepsilon_H i_K)\otimes\id_V.
  \]
  By \eqref{eq:YD-braiding} and the counit identity
  $(\id_H\otimes\varepsilon_H)\Delta_H=\id_H$, the left-hand side is
  $a_V^l(i_K\otimes\id_V)$. This gives
  \eqref{eq:local-condition-for-RVK}.
\end{proof}

\subsection{Hopf ideals and tensor full subcategories}

From now on, we assume that $\mathcal{B}$ is a braided tensor category.
Let $H$ be a Hopf algebra in $\mathcal{B}$ (we note that the rigidity of $\mathcal{B}$ implies that the antipode of $H$ is invertible \cite{MR1685417}). In this subsection, we establish a bijection between Hopf ideals of $H$ and the interval $[\B,{}_H\B]_{\otimes}$.

We first identify ideals of $H$ with a class of subcategories of ${}_H\B$. In fact, more generally, we have Lemma \ref{lem:ideal-subcat-bijection} below. Let $\C$ be a tensor category, and let $A$ be an algebra in $\C$. We note that a left $A$-module is the same thing as an object $X \in \C$ together with an algebra map $\rho_X : A \to X \otimes X^*$. The {\em annihilator} of $X$ is defined and denoted by $\mathrm{Ann}(X) = \Ker(\rho_X)$. Given an ideal $J$ of $A$, we identify ${}_{A/J}\C$ as a full subcategory of ${}_A\C$ as
\begin{equation*}
  {}_{A/J}\mathcal{C} = \{ X \in {}_A\mathcal{C} \mid J \subseteq \mathrm{Ann}(X) \}.
\end{equation*}
The category ${}_A\C$ is a right $\C$-module category. By a {\em $\C$-module full subcategory} of ${}_A\C$, we mean a non-empty full subcategory of ${}_A\C$ that is closed under subquotients, finite direct sums and the action of $\mathcal{C}$. It is easy to see that ${}_{A/J}\C$ is a $\C$-module full subcategory.

\begin{lemma}
\label{lem:ideal-subcat-bijection}
    There is a bijection
    \begin{equation*}
    \{ \text{ideals of $A$} \}
    \to \{ \text{$\C$-module full subcategories of ${}_{A}\mathcal{C}$} \},
    \quad J \mapsto {}_{A/J}\C.
    \end{equation*}
\end{lemma}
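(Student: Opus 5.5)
We construct an explicit inverse $\mathcal{S}\mapsto \mathrm{Ann}(\mathcal{S})$, where for a $\C$-module full subcategory $\mathcal{S}\subseteq{}_A\C$ we set $\mathrm{Ann}(\mathcal{S}) := \bigcap_{X\in\mathcal{S}}\mathrm{Ann}(X)$. This intersection is an ideal. Each $\mathrm{Ann}(X)$ is the kernel of the algebra map $\rho_X$, hence a two-sided ideal. Since $A$ has finite length, the intersection stabilizes after finitely many terms. Moreover, $\mathrm{Ann}(X_1)\cap\mathrm{Ann}(X_2)=\mathrm{Ann}(X_1\oplus X_2)$, so in fact $\mathrm{Ann}(\mathcal{S})=\mathrm{Ann}(X_0)$ for a single object $X_0\in\mathcal{S}$. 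We then check the two composites.

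\emph{First composite: $\mathrm{Ann}({}_{A/J}\C)=J$.} The inclusion $\supseteq$ is by definition. For $\subseteq$, we use the object $A/J\in{}_{A/J}\C$. The ideal $\mathrm{Ann}(A/J)$ is contained in $J$: composing the action $\mathrm{Ann}(A/J)\otimes A/J\to A/J$ with the unit $\unit\to A/J$ gives the composite $\mathrm{Ann}(A/J)\hookrightarrow A\to A/J$, and this composite vanishes. Hence $\mathrm{Ann}(A/J)\subseteq J$, and so $\mathrm{Ann}({}_{A/J}\C)=J$.

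\emph{Second composite: ${}_{A/\mathrm{Ann}(\mathcal{S})}\C=\mathcal{S}$.} The inclusion $\mathcal{S}\subseteq{}_{A/\mathrm{Ann}(\mathcal{S})}\C$ is immediate. The converse is the main step. Let $J=\mathrm{Ann}(X_0)$ as above, and take $Y\in{}_A\C$ with $J\,Y=0$, so that $Y$ is an $A/J$-module. Then $Y$ is a quotient of the free module $A/J\otimes Y$ via the action map, which is split epic in $\C$ by the unit. Since $\mathcal{S}$ is closed under quotients and the right $\C$-action, it therefore suffices to show $A/J\in\mathcal{S}$.

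To get $A/J\in\mathcal{S}$, we use the left $A$-module map
\[
  A\xrightarrow{\ \id_A\otimes\mathrm{coev}_{X_0}\ } A\otimes X_0\otimes X_0^*\xrightarrow{\ a^l_{X_0}\otimes\id\ } X_0\otimes X_0^*,
\]
which is the algebra map $\rho_{X_0}$ regarded as a morphism of left modules. Its target is $X_0\triangleleft X_0^*$, obtained from $X_0$ by the right $\C$-action, so it lies in $\mathcal{S}$. Its kernel is $\mathrm{Ann}(X_0)=J$, so its image is isomorphic to $A/J$. Since $\mathcal{S}$ is closed under subobjects, $A/J\in\mathcal{S}$.

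The main obstacle is verifying that this map is $A$-linear with respect to the left action on $X_0$ alone. This follows because $\rho_X$ is an algebra map and $X\otimes X^*$ carries the left $A$-module structure through its first factor, which is a routine diagram chase. Finally, both maps are order preserving (reversing for the annihilator direction), consistent with the later use of this lemma in the paper.
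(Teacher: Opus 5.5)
Your proof is correct, but it takes a different route from the paper's. For injectivity the paper argues as you do: $J$ is the intersection of the annihilators of objects of ${}_{A/J}\C$, and you make this explicit via $\mathrm{Ann}(A/J)\subseteq J$.

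Surjectivity is where the routes diverge. The paper argues structurally:
\begin{itemize}
\item local finiteness makes $\mathcal{S}$ a reflective subcategory, so $t_{\mathcal{S}}=i\,i^{\mathrm{la}}$ is an idempotent, right exact monad;
\item rigidity upgrades $i^{\mathrm{la}}$ from an oplax to a strong $\C$-module functor, so the unit $\id\to t_{\mathcal{S}}$ is a quotient of the identity among right exact $\C$-module endofunctors of ${}_A\C$;
\item Eilenberg--Watts turns this into a quotient $A\to A/J$ in ${}_A\C_A$, giving $t_{\mathcal{S}}\cong (A/J)\otimes_A(-)$.
\end{itemize}
You instead use finite length and closure under direct sums to write $\mathrm{Ann}(\mathcal{S})=\mathrm{Ann}(X_0)$ for a single $X_0\in\mathcal{S}$. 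The $A$-linear map $\rho_{X_0}$ then embeds $A/J$ into $X_0\otimes X_0^*$, which places $A/J$ in $\mathcal{S}$. Every $Y$ killed by $J$ is then a quotient of $(A/J)\otimes Y$, so it lies in $\mathcal{S}$ as well.

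Your argument is more elementary: it needs only rigidity and finite length, with no Eilenberg--Watts and no module-functor structure on adjoints. It also gives concrete extra information: $\mathcal{S}$ is generated, under quotients and the $\C$-action, by the single object $A/J$, and $J$ is the annihilator of one object of $\mathcal{S}$. The paper's argument gives a more conceptual statement instead, identifying the reflector onto $\mathcal{S}$ with tensoring by the bimodule $A/J$. The $A$-linearity of $\rho_{X_0}$, which you flag as the main obstacle, is just associativity of the action.

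One slip in your closing remark: both maps are order-reversing, not order-preserving. Indeed $J_1\subseteq J_2$ gives ${}_{A/J_2}\C\subseteq{}_{A/J_1}\C$. This does not affect the proof.
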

\begin{proof}
The map is injective since an ideal $J$ of $A$ is recovered from $\mathcal{S} = {}_{A/J}\C$ as the intersection of all annihilators of objects of $\mathcal{S}$. To prove the surjectivity, we note that an ideal of $A$ is nothing but a subobject of $A$ in ${}_A\C_A$. Let $\mathcal{E}$ be the category of $\kk$-linear right exact $\C$-module endofunctors on ${}_A\C$. The Eilenberg-Watts theorem gives an equivalence $\mathcal{E} \approx {}_A\C_A$ of categories. Since ${}_A\C_A$ is an abelian category, we have bijections
\begin{equation*}
    \{ \text{ideals of $A$} \}
    \approx \{ \text{quotients of $A$ in ${}_A\C_A$} \}
    \approx \{ \text{quotients of the identity functor in $\E$} \},
\end{equation*}
which send an ideal $J$ of $A$ to the object $(A/J) \otimes_A (-)$ of $\E$.

Now let $\mathcal{S}$ be a $\C$-module full subcategory of ${}_A\C$. By local finiteness of $\C$, the inclusion functor $i : \mathcal{S} \to {}_A\C$ has a right adjoint $i^{\mathrm{ra}}$ given by taking the maximal subobject belonging to $\mathcal{S}$. The dual argument shows that $i$ has a left adjoint $i^{\mathrm{la}}$ given by taking the maximal quotient object belonging to $\mathcal{S}$. In other words, $\mathcal{S}$ is a reflective subcategory of ${}_A\C$. Thus $t_{\mathcal{S}} := i i^{\mathrm{la}}$ is an idempotent monad on ${}_A\mathcal{C}$ whose category of modules is identified with $\mathcal{S}$. The functor $t_{\mathcal{S}}$ is $\kk$-linear and right exact as it has a right adjoint $i i^{\mathrm{ra}}$.

We consider the unit $\pi : \id_{{}_A\mathcal{C}} \to t_{\mathcal{S}}$ of the adjunction $i^{\mathrm{la}} \dashv i$. By definition, $\pi_M : M \to t_{\mathcal{S}}(M)$ is the quotient morphism for all $M \in {}_A\mathcal{C}$. Since $i : \mathcal{S} \to {}_A\mathcal{C}$ is a strong $\mathcal{C}$-module functor, $i^{\mathrm{la}}$ is an oplax $\mathcal{C}$-module functor such that $\pi$ is a morphism of oplax $\mathcal{C}$-module functors. The rigidity of $\mathcal{C}$ implies that $i^{\mathrm{la}}$ is in fact a strong $\mathcal{C}$-module functor. Therefore $\pi : \id_{{}_A\mathcal{C}} \to t_{\mathcal{S}}$ is in fact an epimorphism in the category $\mathcal{E}$.

Thus, by the above argument, $t_{\mathcal{S}}$ is isomorphic to $(A/J) \otimes_A (-)$ for some ideal $J$ of $A$. Hence $\mathcal{S}$ is identified with ${}_{A/J}\C$. The proof is done.
\end{proof}

Tensor-categorical conditions on ${}_{H/J}\B$ are interpreted as follows:

\begin{lemma}
\label{lem:braided-Hopf-ideals-1}
For an ideal $J$ of the Hopf algebra $H$, we have the following.
\begin{enumerate}
\item ${}_{H/J}\B$ contains the unit object of ${}_{H}\B$ if and only if $\varepsilon_H(J) = 0$.
\item ${}_{H/J}\B$ is closed under the tensor product if and only if $\Delta_H(J) \subseteq H \otimes J + J \otimes H$.
\item ${}_{H/J}\B$ is closed under duals if and only if $S_H^{\pm1}(J) \subseteq J$.
\end{enumerate}
\end{lemma}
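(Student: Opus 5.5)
Each of the three parts follows one pattern. Membership in ${}_{H/J}\B$ is tested by annihilators: $X\in{}_{H/J}\B$ iff the composite $J\hookrightarrow H\xrightarrow{a^l_X}\cdots$ vanishes, i.e. $a^l_X(i_J\otimes\id_X)=0$. I would prove the easy direction by direct computation with the structure maps. For the converse, I would use the fact from Lemma~\ref{lem:ideal-subcat-bijection} that $J$ is recovered as the intersection of the annihilators of the objects of ${}_{H/J}\B$. In particular, $H/J$ itself, viewed as a left $H$-module, lies in ${}_{H/J}\B$, and its annihilator is exactly $J$: the action $H\otimes H/J\to H/J$ composed with the unit $u_H$ recovers the projection $H\to H/J$.

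For (1), the unit object of ${}_H\B$ is $\unit$ with action $\varepsilon_H$. Its annihilator is $\Ker\varepsilon_H$, so $\unit\in{}_{H/J}\B$ iff $J\subseteq\Ker\varepsilon_H$, i.e. $\varepsilon_H(J)=0$.

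For (2), suppose first that $\Delta_H(J)\subseteq H\otimes J+J\otimes H$ and $X,Y\in{}_{H/J}\B$. By \eqref{eq:braided-Hopf-tensor-module-action}, the action of $J$ on $X\otimes Y$ factors through $\Delta_H|_J$. By the hypothesis and naturality of $c_{H,X}$, this lands in a sum of two terms, each of which involves $a^l_X$ restricted to $J$ or $a^l_Y$ restricted to $J$, so both vanish. Conversely, if ${}_{H/J}\B$ is closed under $\otimes$, I would take $X=Y=H/J$. The object $H/J\otimes H/J$ is then annihilated by $J$. Composing its action with $u_H\otimes u_H$ gives $(p\otimes p)\Delta_H$ restricted to $J$, where $p:H\to H/J$ is the projection, and this must therefore be zero. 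Since $\Ker(p\otimes p)=H\otimes J+J\otimes H$, which holds by exactness of $\otimes$ in each variable, the inclusion follows.

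For (3), I would use \eqref{eq:braided-Hopf-dual-module-action}. The action of $J$ on $X^*$ is $a^l_X$ precomposed with $S_H|_J$, up to braiding and (co)evaluation, which are invertible manipulations by rigidity. Hence $X^*\in{}_{H/J}\B$ iff $a^l_X(S_H i_J\otimes\id_X)=0$, i.e. iff $S_H(J)\subseteq\mathrm{Ann}(X)$. Similarly, ${}^*X\in{}_{H/J}\B$ iff $S_H^{-1}(J)\subseteq\mathrm{Ann}(X)$. Sufficiency follows directly: if $S_H^{\pm1}(J)\subseteq J\subseteq\mathrm{Ann}(X)$, both duals lie in the subcategory. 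For necessity, I would apply this with $X=H/J$, whose annihilator is $J$, to get $S_H^{\pm1}(J)\subseteq J$.

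The main obstacle is the computation in (3) that the annihilator of $X^*$ equals the preimage of $\mathrm{Ann}(X)$ under $S_H$. I would first test equality after transposing with $\mathrm{ev}_X$ and $\mathrm{coev}_X$, which is an isomorphism $\Hom(H\otimes X^*,X^*)\cong\Hom(X^*\otimes H\otimes X,\unit)$ up to braiding. This identifies the condition with vanishing of $\mathrm{ev}_X(\id\otimes a^l_X)(\id\otimes S_Hi_J\otimes\id)$ after an invertible braiding, and hence with $a^l_X(S_Hi_J\otimes\id_X)=0$.
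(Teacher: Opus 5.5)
Your proposal is correct and follows essentially the paper's proof. Membership is tested via annihilators, and the \emph{if} directions come from the explicit actions on $X\otimes Y$ and $X^*$. The \emph{only if} directions come from testing on $H/J$ and $H/J\otimes H/J$, using $\mathrm{Ann}(H/J)=J$ and $\Ker(p\otimes p)=H\otimes J+J\otimes H$. The differences are cosmetic: you extract $(p\otimes p)\Delta_H$ by precomposing with units rather than citing the kernel of $\rho_X\otimes\rho_Y$, and in (3) you prove the sharper equivalence $X^*\in{}_{H/J}\B\iff S_H(J)\subseteq\mathrm{Ann}(X)$ instead of the one-sided inclusion the paper records.
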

\begin{proof}
Part (1) is obvious. To show Part (2), we note that the equation
\begin{equation*}
  \rho_{X \otimes Y} = (\id_X \otimes c_{Y \otimes Y^*, X^*}^{-1})(\rho_X \otimes \rho_Y) \Delta_H
\end{equation*}
holds for $X, Y \in {}_H\B$ by the definition \eqref{eq:braided-Hopf-tensor-module-action} of the action of $H$ on the tensor product. This equation proves the `if' part. The equation also implies
\begin{equation*}
  \Delta_H(\mathrm{Ann}(X \otimes Y)) \subseteq \Ker(\rho_X \otimes \rho_Y) = \mathrm{Ann}(X) \otimes H + H \otimes \mathrm{Ann}(Y).
\end{equation*}
The `only if' part is now proved as follows: We assume that ${}_{H/J}\B$ is closed under the tensor products. Then $T := H/J \otimes H/J$ belongs to ${}_{H/J}\mathcal{B}$. Thus $J \subseteq \mathrm{Ann}(T)$, and therefore
\begin{equation*}
    \Delta_H(J) \subseteq \Delta_H(\mathrm{Ann}(T))
    \subseteq \mathrm{Ann}(H/J) \otimes H + H \otimes \mathrm{Ann}(H/J)
    = J \otimes H + H \otimes J.
\end{equation*}

It remains to verify Part (3). For a left $H$-module $X$, we have
\begin{equation*}
    S_H(\mathrm{Ann}(X^*)) \subseteq \mathrm{Ann}(X), \quad
    S_H^{-1}(\mathrm{Ann}({}^*\!X)) \subseteq \mathrm{Ann}(X).
\end{equation*}
Indeed, the former follows from the definition \eqref{eq:braided-Hopf-dual-module-action} of the action of $H$ on $X^*$ and the latter is obtained in a similar way. The `if' part of Part (3) is now easy. The `only if' part is proved by letting $X = H/J$ in the above.
\end{proof}

We regard $\B$ as a full subcategory of ${}_H\B$, as $\B = \{ X \in {}_H\B \mid a^l_X = \varepsilon_H \otimes \id_X \}$. Then:
\begin{lemma}
  \label{lem:Hopf-ideals-and-tensor-subcates}
  There is a bijection
  \begin{equation*}
    \{ \text{Hopf ideals of $H$} \}
    \to [\mathcal{B},{}_H\mathcal{B}]_{\otimes},
    \quad J \mapsto {}_{H/J}\B.
  \end{equation*}
\end{lemma}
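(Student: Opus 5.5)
The plan is to combine Lemma~\ref{lem:ideal-subcat-bijection}, applied to the algebra $A=H$ in $\C=\B$, with Lemma~\ref{lem:braided-Hopf-ideals-1}. Lemma~\ref{lem:ideal-subcat-bijection} gives a bijection $J\mapsto{}_{H/J}\B$ between ideals of $H$ and $\B$-module full subcategories of ${}_H\B$. So it suffices to show two things. First, for an ideal $J$, the subcategory ${}_{H/J}\B$ is a tensor subcategory containing $\B$ exactly when $J$ is a Hopf ideal. Second, every tensor subcategory $\E$ with $\B\subseteq\E\subseteq{}_H\B$ is a $\B$-module full subcategory.

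For the second point, $\E$ is closed under subquotients and finite direct sums by definition. I would view $\B\subseteq{}_H\B$ as the trivial modules, with action $\varepsilon_H\otimes\id$. The right $\B$-action on ${}_H\B$ is $M\otimes X$ with $H$ acting via \eqref{eq:braided-Hopf-tensor-module-action}. For trivial $X$ this is exactly the tensor product $M\otimes X$ in ${}_H\B$. Hence $\E\supseteq\B$ being closed under $\otimes$ gives closure under the $\B$-action. So $\E={}_{H/J}\B$ for a unique ideal $J$.

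For the first point, suppose $J$ is a Hopf ideal. Lemma~\ref{lem:braided-Hopf-ideals-1}(1),(2) gives that ${}_{H/J}\B$ contains $\unit$ and is closed under $\otimes$. It contains $\B$, since a trivial module is annihilated by $J$ because $\varepsilon_H(J)=0$. For duals, Lemma~\ref{lem:braided-Hopf-ideals-1}(3) requires $S_H^{\pm1}(J)\subseteq J$, while the definition only gives $S_H(J)\subseteq J$. I expect this to be the only subtle step. To handle it, I would use finiteness: $J$ has finite length, and $S_H$ is invertible (by \cite{MR1685417}), so $S_H|_J:J\to J$ is a monomorphism from $J$ to itself. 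A monomorphism from a finite-length object to itself is an isomorphism, since it preserves length. Hence $S_H(J)=J$, and therefore $S_H^{-1}(J)=J$. Since $\kk$-linear abelian tensor subcategories require closure under both left and right duals, this gives that ${}_{H/J}\B$ is a tensor subcategory.

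Conversely, if ${}_{H/J}\B$ is a tensor subcategory, then Lemma~\ref{lem:braided-Hopf-ideals-1} gives $\varepsilon_H(J)=0$, $\Delta_H(J)\subseteq H\otimes J+J\otimes H$ and $S_H(J)\subseteq J$, so $J$ is a Hopf ideal. Together with injectivity from Lemma~\ref{lem:ideal-subcat-bijection}, this yields the claimed bijection onto $[\B,{}_H\B]_\otimes$.
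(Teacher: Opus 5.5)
Your proposal is correct and follows essentially the same route as the paper. Both proofs use Lemma~\ref{lem:ideal-subcat-bijection} to reduce to $\B$-module full subcategories, read off the Hopf-ideal conditions from Lemma~\ref{lem:braided-Hopf-ideals-1}, and use finite length of $J$ together with invertibility of $S_H$ to get $S_H(J)=J=S_H^{-1}(J)$. You also spell out two points the paper leaves implicit: why the right $\B$-action agrees with tensoring by trivial modules, and why ${}_{H/J}\B\supseteq\B$ when $\varepsilon_H(J)=0$.
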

\begin{proof}
A tensor full subcategory of ${}_H\B$ containing $\B$ is a $\B$-module full subcategory of ${}_H\B$, and thus it is of the form ${}_{H/J}\B$ for some ideal $J$ of $H$. Since $S_H$ is invertible, and since $J$ is of finite length, the conditions $S_H(J) \subseteq J$ and $S_H^{-1}(J) \subseteq J$ are equivalent (in fact, they are equivalent to $S_H(J) = J$). The claim of this lemma is now a direct consequence of Lemma \ref{lem:braided-Hopf-ideals-1}.
\end{proof}

Finally, we provide a useful criterion for an ideal of $H$ to be a Hopf ideal.
A {\em bi-ideal} of a bialgebra $B$ is an ideal $J$ of $B$ such that $\Delta_B(J) \subseteq B \otimes J + J \otimes B$ and $\varepsilon_B(J) = 0$. In general, a bi-ideal of a Hopf algebra may not be closed under the antipode. Nevertheless, under our assumption that $\mathcal{B}$ is a braided tensor category, we have:

\begin{lemma}
  \label{lem:braided-Hopf-ideals-2}
  A bi-ideal of a Hopf algebra $H$ in $\B$ is a Hopf ideal.
\end{lemma}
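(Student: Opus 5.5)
Let $J$ be a bi-ideal of $H$. The plan is to show that ${}_{H/J}\B$ is a tensor subcategory of ${}_H\B$, and then read off $S_H(J)\subseteq J$ from Lemma~\ref{lem:braided-Hopf-ideals-1}(3). By Lemma~\ref{lem:braided-Hopf-ideals-1}(1)--(2), ${}_{H/J}\B$ contains the unit and is closed under tensor products. By construction it is also a full subcategory closed under subquotients and finite direct sums. The only missing property is closure under duals.

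For duals, I would use finiteness and Frobenius--Perron dimension rather than an explicit antipode computation. Put $\mathcal{S}={}_{H/J}\B$. This is a finite abelian $\kk$-linear category, being a Serre subcategory of the finite category ${}_H\B$. It is a full monoidal subcategory of ${}_H\B$ containing $\unit$. The standard fact I would invoke is: a full subcategory of a finite tensor category that contains $\unit$ and is closed under tensor products, subquotients and direct sums is automatically closed under duals. The argument runs as follows. Take $X\in\mathcal{S}$ and its dual $X^*$ in ${}_H\B$. The classes of objects of $\mathcal{S}$ span a subring of $\mathrm{Gr}({}_H\B)$ that is closed under multiplication and is generated by finitely many simples. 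Consider the right multiplication by $[X]$ on the positive cone spanned by simples in $\mathcal{S}$. Since $\unit$ is a composition factor of $X\otimes X^*$, a Frobenius--Perron argument shows that some $X^{\otimes n}$ has $X^*$, or at least $X^{**}$, as a subquotient. Concretely, since $\mathcal{S}$ has finitely many simples, the matrix of $[X]\cdot$ on the span of simples of $\mathcal{S}$ satisfies a polynomial relation. Its inverse in $\mathrm{Gr}({}_H\B)\otimes\mathbb{Q}$, which is $[X^*]$ up to the appropriate relation, therefore lies in the span of simples of $\mathcal{S}$. Alternatively, I would simply cite the known result that in a finite tensor category a full subcategory closed under $\otimes$, subquotients and containing $\unit$ is a tensor subcategory; compare \cite[Exercise~4.11.6 / Corollary~4.11.4 type statements]{etingof2016tensor}.

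Once ${}_{H/J}\B$ is known to be closed under duals (both left and right, by applying the same argument to ${}_H\B^{\rev}$ or using that $X\mapsto{}^*X$ inverts $X\mapsto X^*$), Lemma~\ref{lem:braided-Hopf-ideals-1}(3) gives $S_H(J)\subseteq J$. Hence $J$ is a Hopf ideal.

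The main obstacle is the duality step. Rigorously justifying that the dual of an object of $\mathcal{S}$ lies in $\mathcal{S}$ requires the finiteness hypothesis. This is exactly where the hypothesis that $\B$ is a (finite) tensor category enters, since in general bi-ideals need not be stable under the antipode. My first attempt would be the Grothendieck-ring argument: $X^*$ is a direct summand, up to composition factors, of a polynomial in $X$. This uses that the set of simples in $\mathcal{S}$ is finite and that Frobenius--Perron dimensions are positive. Concretely, $X^{\otimes}$-powers eventually recycle, which forces $[X^*]$ into the $\mathbb{Z}_{\ge0}$-span.
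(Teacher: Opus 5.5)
There is a genuine gap, at the one step that carries all the content. The reduction through Lemma~\ref{lem:braided-Hopf-ideals-1} is fine. But it moves the whole lemma into the claim that $\mathcal{S}={}_{H/J}\B$ is closed under duals, and that claim is not established.

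The ``standard fact'' you would cite, specialized to ${}_H\B$, is literally the statement that bi-ideals are Hopf ideals. The paper obtains closure under duals as a \emph{consequence} of this lemma, in the proposition right after it. So the fact cannot be invoked with a vague reference.

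Your sketch does not prove it either. The class $[X^*]$ is not an inverse of $[X]$ in $\mathrm{Gr}({}_H\B)\otimes\mathbb{Q}$, and typically $[X]$ is not even invertible: in $\mathrm{Rep}(S_3)$ the $2$-dimensional $V$ satisfies $[V]([\unit]-[\mathrm{sgn}])=0$. More fundamentally, Grothendieck-ring and Frobenius--Perron arguments only see composition factors. What they can deliver is that for a \emph{simple} $X\in\mathcal{S}$, the simple object $X^*$ occurs in some $X^{\otimes n}$. But $\mathcal{S}$ is closed under subquotients, not under extensions; it is not a Serre subcategory. So knowing that all composition factors of $X^*$ lie in $\mathcal{S}$ does not give $X^*\in\mathcal{S}$. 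For instance, take $\operatorname{char}\kk=p$, $H=\kk[x]/(x^p)$ with $x$ primitive, and $J=(x)$. Every composition factor of the regular module lies in $\mathcal{S}=\Vect$, yet $H\notin\mathcal{S}$. Moreover, the object whose dual you actually need in Lemma~\ref{lem:braided-Hopf-ideals-1}(3) is $X=H/J$, which is far from simple.

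Your approach also needs $\mathcal{S}$ to have finitely many simple objects, which fails in the paper's setting. Here $\mathcal{S}\supseteq\B$, and in Section~\ref{sec:hopf-ideals} $\B$ is only a braided tensor category, not necessarily finite. The lemma is used in that generality in Lemma~\ref{lem:CSA-to-Hopf-ideal} and Proposition~\ref{prop:Galois-conn-CSA-Hopf-ideal}.

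The finiteness that does the work is finite-dimensionality of Hom spaces. Under convolution $f\star g=m_H(f\otimes g)\Delta_H$, the space $\Hom_\B(H,H)$ is a finite-dimensional algebra in which $S_H$ is the inverse of $\id_H$. Cayley--Hamilton then gives $S_H=\sum_j c_j\,\id_H^{\star j}$. Here $\id_H^{\star 0}=u_H\varepsilon_H$ kills $J$. For $j\geq 1$, $\id_H^{\star j}$ is iterated comultiplication followed by iterated multiplication, which sends a bi-ideal $J$ into $J$. Hence $S_H(J)\subseteq J$. The paper runs the same argument on $H^*$, showing that $S_{H^*}$ preserves the subbialgebra $(H/J)^*$.
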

\begin{proof}
  Let $D$ be a Hopf algebra in $\mathcal{B}$. Then $\mathcal{E} := \Hom_{\mathcal{B}}(D, D)$ is an algebra with respect to the convolution product $f \star g = m_D (f \otimes g) \Delta_D$.
  By definition, the antipode $S_D$ is the inverse of $\id_D$ in the algebra $\mathcal{E}$. Since $\mathcal{E}$ is finite-dimensional, we use the Cayley-Hamilton theorem to show that $S_D$ is written as a polynomial of $\id_D$ in $\mathcal{E}$, say $S_D = \sum_{j = 0}^m c_j \id_D^{\star j}$ for some $c_j \in \Bbbk$, where $(-)^{\star j}$ is the $j$-th power with respect to the convolution product. From this expression, it is obvious that $S_D(D') \subseteq D'$ for any subbialgebra $D'$ of $D$.

  Now we apply the above argument to $D = H^*$ and $D' = (H/J)^*$. Then it follows that the subbialgebra $(H/J)^*$ of $H^*$ is stable under the antipode $S_H^* : H^* \to H^*$. This means that $S_H(J) \subseteq J$. The proof is done.
\end{proof}

Although the following observation is not needed for the proof of the main result of this section, we include it since it may be of independent interest:

\begin{proposition}
    Let $\C$ be a full subcategory of ${}_H\B$, where $\B$ is a braided tensor category and $H$ is a Hopf algebra in $\mathcal{B}$. We assume that $\C$ contains $\B$ and is closed under subquotients, finite direct sums and the tensor product of ${}_H\B$. Then $\C$ is closed under duals, and thus it is a tensor full subcategory of ${}_H\B$.
\end{proposition}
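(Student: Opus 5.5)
The plan is to reduce the statement to Lemma~\ref{lem:braided-Hopf-ideals-2} by way of Lemmas~\ref{lem:ideal-subcat-bijection} and \ref{lem:braided-Hopf-ideals-1}.

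First, we check that $\C$ is a $\B$-module full subcategory of ${}_H\B$. By hypothesis, $\C$ is nonempty, closed under subquotients and finite direct sums, and contains $\B$. For $X\in\C$ and $V\in\B$, we regard $V$ as a trivial $H$-module, so $V\in\C$. The right $\B$-action on ${}_H\B$ is $X\otimes V$ with action $a^l_X\otimes\id_V$, which agrees with the tensor product in ${}_H\B$ of $X$ with the trivial module $V$ by \eqref{eq:braided-Hopf-tensor-module-action} and the counit axiom. Closure under tensor products then shows $X\otimes V\in\C$. Lemma~\ref{lem:ideal-subcat-bijection}, applied with $A=H$ and the tensor category $\B$, gives an ideal $J$ of $H$ with $\C={}_{H/J}\B$.

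Next, since $\B\subseteq\C$, the unit object $\unit$ with trivial action lies in $\C$, so Lemma~\ref{lem:braided-Hopf-ideals-1}(1) gives $\varepsilon_H(J)=0$. Closure under tensor products together with Lemma~\ref{lem:braided-Hopf-ideals-1}(2) gives $\Delta_H(J)\subseteq H\otimes J+J\otimes H$. Thus $J$ is a bi-ideal. Lemma~\ref{lem:braided-Hopf-ideals-2} then shows that $J$ is a Hopf ideal, so $S_H(J)\subseteq J$. Since $S_H$ is invertible and $J$ has finite length, $S_H(J)\subseteq J$ forces $S_H(J)=J$, and hence $S_H^{-1}(J)=J$, as in the proof of Lemma~\ref{lem:Hopf-ideals-and-tensor-subcates}. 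Lemma~\ref{lem:braided-Hopf-ideals-1}(3) then shows that ${}_{H/J}\B=\C$ is closed under left and right duals, so $\C$ is a tensor full subcategory.

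The main point requiring care is the first step: the $\B$-module category structure used in Lemma~\ref{lem:ideal-subcat-bijection} must be the one that coincides with tensoring by trivial modules. We would take the right action $X\triangleleft V=X\otimes V$. Here $H$ acts only on the left factor, so no braiding enters, and the identification with the ${}_H\B$ tensor product follows from $(\id\otimes\varepsilon_H)\Delta_H=\id_H$ together with naturality of $c$ against $\varepsilon_H$. One should also confirm that "tensor category" in this section carries enough finiteness for Lemma~\ref{lem:ideal-subcat-bijection} and for the Cayley--Hamilton argument of Lemma~\ref{lem:braided-Hopf-ideals-2}, which needs $\Hom_\B(H^*,H^*)$ to be finite-dimensional. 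Both lemmas are stated under the same standing assumptions as this proposition, so we take them as given.
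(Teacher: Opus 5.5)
Your proposal is correct and follows the paper's proof essentially step by step. You use Lemma~\ref{lem:ideal-subcat-bijection} to write $\C={}_{H/J}\B$, Lemma~\ref{lem:braided-Hopf-ideals-1}(1)--(2) to see that $J$ is a bi-ideal, Lemma~\ref{lem:braided-Hopf-ideals-2} to upgrade it to a Hopf ideal, the finite-length argument to get $S_H^{\pm1}(J)=J$, and Lemma~\ref{lem:braided-Hopf-ideals-1}(3) to conclude closure under duals. You also check explicitly that tensoring with a trivial module $V\in\B$ recovers the right $\B$-action, so that $\C$ is a $\B$-module full subcategory; the paper leaves this implicit, and your verification via the counit axiom and naturality of $c$ is correct.
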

\begin{proof}
  By Lemma~\ref{lem:ideal-subcat-bijection}, there is an ideal
  $J\subseteq H$ such that $\C={}_{H/J}\B$. Since $\C$ contains $\B$, 
  it contains the tensor unit of
  ${}_H\B$. As $\C$ is closed under tensor products,
  Lemma~\ref{lem:braided-Hopf-ideals-1} proves that $J$ is a bi-ideal. By
  Lemma~\ref{lem:braided-Hopf-ideals-2}, $J$ is a Hopf ideal. 
  Since the antipode $S_H$ is invertible and $J$ has finite length, the
  inclusion $S_H(J)\subseteq J$ implies $S_H(J)=J=S_H^{-1}(J)$. 
  Lemma~\ref{lem:braided-Hopf-ideals-1}(3) now shows that
  ${}_{H/J}\B=\C$ is closed under duals. Hence $\C$ is a tensor full
  subcategory of ${}_H\B$.
\end{proof}

\subsection{Galois connection between Hopf ideals and normal coideal subalgebras}
\label{subsec:braided-Hopf-Gal-conn}

We show that the maps $\mathcal{F}$ and $\mathcal{G}$ in Theorem \ref{thm:CSA-Hopf-ideal-bijection} are well-defined and constitute a Galois connection without assuming the finiteness of $\mathcal{B}$. Given a Hopf ideal $J$ of $H$, we define
\begin{equation*}
  H^{\mathrm{co}(H/J)} = \Ker(
  (\id_H \otimes \pi)\Delta_H - \id_H \otimes \pi u_H
  : H \to H \otimes (H/J)),
\end{equation*}
where $\pi : H \to H/J$ is the quotient map.

\begin{lemma}
  \label{lem:Hopf-ideal-to-CSA}
  $K := H^{\mathrm{co}(H/J)}$ is a normal left coideal subalgebra of $H$.
\end{lemma}
\begin{proof}
  We set $f = (\id_H \otimes \pi)\Delta_H$ and $g = \id_H \otimes \pi u_H$ for simplicity of notation. It is easy to see that both $f$ and $g$ are algebra morphisms in $\mathcal{B}$. Thus $K$ is an algebra in $\mathcal{B}$ as an equalizer of two algebra morphisms in $\mathcal{B}$.
  Since $f$ and $g$ are also left $H$-comodule morphisms in $\mathcal{B}$, the object $K$ is also a subcomodule of $H$.

It remains to show the normality of $K$.
Let $i_K : K \to H$ be the inclusion morphism.
By the universal property of $K$ as an equalizer, it suffices to verify the following equation:
  \begin{equation}
    \label{eq:Hopf-ideal-to-CSA-proof-1}
    f \circ \triangleright_{\mathrm{ad}} \circ (\id_H \otimes i_K)
    = g \circ \triangleright_{\mathrm{ad}} \circ (\id_H \otimes i_K).
  \end{equation}
  We note that an ideal of $H$ is closed under the adjoint action $\triangleright_{\mathrm{ad}}$.
  We denote by $\overline{\triangleright}_{\mathrm{ad}}$ the action of $H$ on $H/J$ induced by $\triangleright_{\mathrm{ad}}$. Then we have
  \begin{equation}
    \label{eq:Hopf-ideal-to-CSA-proof-2}
    \overline{\triangleright}_{\mathrm{ad}} (\id_H \otimes \overline{u})
    = \pi \circ \triangleright_{\mathrm{ad}} \circ (\id_H \otimes u_H) =  \varepsilon_H \otimes \overline{u}
  \end{equation}
  by the axioms of Hopf algebras, where $\overline{u} := \pi u_H$ is the unit of the quotient algebra $H/J$.
  We recall that $H$ is a Yetter-Drinfeld module with action $\triangleright_{\mathrm{ad}}$ and coaction $\Delta_H$. Now we compute the left-hand side of \eqref{eq:Hopf-ideal-to-CSA-proof-1} as follows: $f \circ \triangleright_{\mathrm{ad}} \circ (\id_H \otimes i_K)$
  \begin{align*}
    {}^{\eqref{eq:braided-YD-condition}}
    & = (\id_H \otimes \pi) \circ (\id_H \otimes \triangleright_{\mathrm{ad}})
    \circ \Xi_{H,H,H} \circ (\id_H \otimes \Delta_H) \circ (\id_H \otimes i_K) \\
    & = (\id_H \otimes \overline{\triangleright}_{\mathrm{ad}})
    \circ (\id_H \otimes \id_H \otimes \pi)
    \circ \Xi_{H,H,H} \circ (\id_H \otimes \Delta_H i_K) \\
    {}^{\text{(nat.)}}
    & = (\id_H \otimes \overline{\triangleright}_{\mathrm{ad}})
    \circ \Xi_{H,H,H/J} \circ (\id_H \otimes \id_H \otimes \pi) \circ (\id_H \otimes \Delta_H i_K) \\
    {}^{\text{(eq.)}}
    & = (\id_H \otimes \overline{\triangleright}_{\mathrm{ad}})
    \circ \Xi_{H,H,H/J} \circ (\id_H \otimes i_K \otimes \overline{u}) \\
    {}^{\text{(nat.)}}
    & = (\id_H \otimes \overline{\triangleright}_{\mathrm{ad}})
    \circ (\id_H \otimes \id_H \otimes \overline{u})
    \circ \Xi_{H,H,\unit} \circ (\id_H \otimes i_K) \\
    {}^{\eqref{eq:Hopf-ideal-to-CSA-proof-2}}
    & = (\id_H \otimes \varepsilon_H \otimes \overline{u})
    \circ \Xi_{H,H,\unit} \circ (\id_H \otimes i_K) \\
    {}^{\eqref{eq:braided-adjoint-action}}
    & = (\id_H \otimes \overline{u}) \circ \triangleright_{\mathrm{ad}} \circ (\id_H \otimes i_K)
    = g \circ \triangleright_{\mathrm{ad}} \circ (\id_H \otimes i_K),
  \end{align*}
  where (nat.) means that the naturality of $\Xi_{H,H,X}$ in $X \in \B$ is used there and (eq.) follows from the equalizer property of $K$. The proof is done.
\end{proof}

Now let $K$ be a normal left coideal subalgebra of $H$. Given subobjects $X$ and $Y$ of $H$, we denote by $X Y$ the image of $X \otimes Y$ under the multiplication $H \otimes H \to H$. We define $K^{+}$ to be the kernel of the restriction of the counit $\varepsilon_H : H \to \unit$ to $K$.

\begin{lemma}
  \label{lem:CSA-to-Hopf-ideal}
  $J := H K^{+}$ is a Hopf ideal of $H$.
  Moreover, we have $K^{+} H = J$.
\end{lemma}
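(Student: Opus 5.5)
First I will show that $K^{+}H = HK^{+}$. With that in hand, $J$ is a two-sided ideal, because $HJ = HHK^+ \subseteq HK^+$ and $JH = HK^+H = HHK^+\subseteq J$. The equality itself comes from normality. Working in $\B$ with morphisms rather than elements, I use the braided identity
\[
 m_H\circ(\id_H\otimes \iota) = m_H\circ(\triangleright_{\mathrm{ad}}\otimes \id_H)\circ(\id_H\otimes c_{H,H})\circ(\Delta_H\otimes \id_H)
\]
restricted to $H\otimes K$, where $\iota$ is the inclusion of $K$. This is the braided form of $hk = (h_{(1)}\triangleright_{\mathrm{ad}} k)\,h_{(2)}$ and follows from the antipode axiom together with \eqref{eq:braided-adjoint-action}. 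Since $K$ is stable under $\triangleright_{\mathrm{ad}}$, and $\varepsilon_H$ is $H$-linear for the adjoint action (that is, $\varepsilon_H\circ\triangleright_{\mathrm{ad}}=\varepsilon_H\otimes\varepsilon_H$), the adjoint action also preserves $K^+=\Ker(\varepsilon_H|_K)$. Restricting the identity to $H\otimes K^+$ then gives $HK^+\subseteq K^+H$. For the reverse inclusion I will use the analogous identity $kh = h_{(2)}(S^{-1}(h_{(1)})\triangleright_{\mathrm{ad}} k)$, suitably braided. This needs $S_H$ to be invertible, which holds by the remark citing \cite{MR1685417}. Together the two inclusions give $K^+H = HK^+$.

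Next, $\varepsilon_H(J)=0$: $\varepsilon_H$ is an algebra map, so $\varepsilon_H\circ m_H$ restricted to $H\otimes K^+$ equals $\varepsilon_H\otimes \varepsilon_H|_{K^+}=0$, and the epimorphism $H\otimes K^+\to J$ lets us conclude. For the comultiplication, the key input is the coideal property. Since $\Delta_H(K)\subseteq H\otimes K$, we get $\Delta_H(K^+)\subseteq H\otimes K^+ + K^+\otimes H$. To prove this, decompose $\Delta_H|_K$ using $\id_K = (\id_K - u\varepsilon) + u\varepsilon$, which gives $K = K^+\oplus\unit$ as objects because $u_H$ factors through $K$ and is split by $\varepsilon_H$. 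On $K^+$, counitality $(\varepsilon_H\otimes\id)\Delta_H=\id$ shows that the component landing in $H\otimes \unit$ (via $\varepsilon$ on the right factor) lies in $K^+\otimes\unit$. The underlying mechanism is the identity $x_{(1)}\otimes x_{(2)} = x_{(1)}\otimes (x_{(2)}-\varepsilon(x_{(2)})) + x\otimes 1$, written as morphisms. Then multiplicativity of $\Delta_H$ in the braided sense, $\Delta_H m_H = (m_H\otimes m_H)(\id\otimes c_{H,H}\otimes\id)(\Delta_H\otimes\Delta_H)$, together with the ideal property of $J$ on both sides, gives $\Delta_H(HK^+)\subseteq H\otimes J + J\otimes H$. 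Here I use that $c_{H,H}$ carries $H\otimes K^+$ to $K^+\otimes H$ by naturality. So $J$ is a bi-ideal, and Lemma~\ref{lem:braided-Hopf-ideals-2} upgrades it to a Hopf ideal.

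I expect the main obstacle to be the first step, the equality $K^+H = HK^+$ in braided form, and in particular the reverse inclusion, which involves $S_H^{-1}$ and the inverse braiding. If that computation becomes unwieldy, I will fall back on a finite-length argument: $HK^+$ is an ideal-like subobject stable under the adjoint action. Applying the first identity to $K^+H$ and using $S_H(J)\subseteq J$ (once $J$ is known to be a Hopf ideal as a left ideal plus bi-ideal) gives a comparison of subobjects. Alternatively, apply the whole argument to $H^{\mathrm{op,cop}}$, which swaps the roles of $HK^+$ and $K^+H$; the equality of lengths then forces $K^+H=HK^+$.
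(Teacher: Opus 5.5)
Your proposal is correct. Its first half is the paper's argument written out by hand. Your identity $m_H = m_H(\triangleright_{\mathrm{ad}}\otimes\id_H)(\id_H\otimes c_{H,H})(\Delta_H\otimes\id_H)$ is exactly the commutativity $m_H\circ\Sigma_{\mathbf{A},\mathbf{A}}=m_H$ of the adjoint algebra $\mathbf{A}$ in ${}^H_H\mathcal{YD}(\B)$. Your second identity $kh=h_{(2)}\bigl(S_H^{-1}(h_{(1)})\triangleright_{\mathrm{ad}}k\bigr)$ is just $m_H\circ\Sigma_{\mathbf{A},\mathbf{A}}^{-1}=m_H$.

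The paper avoids the braided $S_H^{-1}$ computation that you flag as the main obstacle. By naturality of $\Sigma_{\mathbf{A},-}$ on ${}_H\B$ (which needs $K^{+}$ to be ad-stable, as you correctly check), it writes $m_H(\id_H\otimes i_K^{+}) = m_H(i_K^{+}\otimes\id_H)\circ\Sigma_{\mathbf{A},K^{+}}$. Since $\Sigma_{\mathbf{A},K^{+}}$ is an isomorphism because $S_H$ is invertible, the two images coincide at once. So you can drop the explicit inverse formula. You can also drop your fallbacks. The first one is circular: Lemma~\ref{lem:braided-Hopf-ideals-2} applies only to bi-ideals, which are two-sided by definition, so you cannot use $S_H(J)\subseteq J$ before knowing $HK^{+}=K^{+}H$.

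The second half genuinely differs from the paper. The paper identifies ${}_{H/J}\B$ with $\{X\mid a^l_X(i_K\otimes\id_X)=\varepsilon_H i_K\otimes\id_X\}$. It observes that this subcategory contains the unit and is closed under $\otimes$; the coideal property enters here. It then converts these closure properties into the bi-ideal conditions via Lemma~\ref{lem:braided-Hopf-ideals-1}(1)--(2).

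You instead verify $\varepsilon_H(J)=0$ and $\Delta_H(J)\subseteq H\otimes J+J\otimes H$ directly. You get this from $\Delta_H(K^{+})\subseteq H\otimes K^{+}+K^{+}\otimes H$ and the braided multiplicativity of $\Delta_H$. This is valid, but note that the counit axiom you need there is $(\id_H\otimes\varepsilon_H)\Delta_H=\id_H$, not the $(\varepsilon_H\otimes\id_H)\Delta_H=\id_H$ you wrote. Your route is more elementary: it bypasses the annihilator calculus and the kernel computation for $\rho_X\otimes\rho_Y$ in Lemma~\ref{lem:braided-Hopf-ideals-1}.

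The paper's route costs nothing extra there. The same description of ${}_{H/HK^{+}}\B$ is reused in the proof of Theorem~\ref{thm:CSA-Hopf-ideal-bijection} to identify $\Phi(K)$ with $\widetilde{R}({}_{H/HK^{+}}\B)$. Both routes finish with Lemma~\ref{lem:braided-Hopf-ideals-2}.
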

\begin{proof}
  We first show $H K^{+} = K^{+} H$.
  We consider the adjoint algebra $\mathbf{A} \in {}^H_H\mathcal{YD}(\mathcal{B})$.
  As we have remarked, the braiding $\Sigma_{\mathbf{A},M}$ is naturally defined for all $M \in {}_H\mathcal{B}$. Let $i_K^{+} : K^{+} \to H$ be the inclusion morphism. Then we have
  \begin{gather*}
    H K^{+}
    = \mathrm{Im}(m_H \Sigma_{\mathbf{A},\mathbf{A}}^{-1} \Sigma_{\mathbf{A},\mathbf{A}}(\id_H \otimes i_K^{+}))
    = \mathrm{Im}(m_H \Sigma_{\mathbf{A},\mathbf{A}}(\id_H \otimes i_K^{+})) \\
    = \mathrm{Im}(m_H (i_K^{+} \otimes \id_H) \Sigma_{\mathbf{A},K^{+}})
    = \mathrm{Im}(m_H (i_K^{+} \otimes \id_H)) = K^{+}H,
  \end{gather*}
  where the first equality follows from the definition of $H K^{+}$, the second from the commutativity of $\mathbf{A}$, the third from the naturality of $\Sigma_{\mathbf{A}, M}$ for $M \in {}_H\mathcal{B}$, and the fourth from the invertibility of $\Sigma_{\mathbf{A},K^{+}}$.

  The equation $H K^{+} = K^{+} H$ implies that $J$ is an ideal generated by $K^{+}$.
  As full subcategories of ${}_H\mathcal{B}$, we have
  \begin{align*}
    {}_{H/J}\mathcal{B}
    & = \{ X \in {}_H\mathcal{B} \mid J \subseteq \mathrm{Ann}(X) \}
      = \{ X \in {}_H\mathcal{B} \mid K^{+} \subseteq \mathrm{Ann}(X) \} \\
    & = \{ X \in {}_H\mathcal{B} \mid a^l_X(i_K \otimes \id_X) = \varepsilon_H i_K \otimes \id_X \},
  \end{align*}
  where $i_K : K \to H$ is the inclusion morphism. From the last expression, we see that ${}_{H/J}\mathcal{B}$ contains the unit object of ${}_H\mathcal{B}$ and is closed under the tensor product of ${}_H\mathcal{B}$. Hence, by Lemmas~\ref{lem:braided-Hopf-ideals-1} and \ref{lem:braided-Hopf-ideals-2}, $J$ is a Hopf ideal of $H$.
\end{proof}

We establish a Galois connection as follows:

\begin{proposition}
  \label{prop:Galois-conn-CSA-Hopf-ideal}
  Let $\mathcal{B}$ be a braided tensor category, and let $H$ be a Hopf algebra in $\mathcal{B}$. Then there are well-defined order-preserving maps
  \begin{equation*}
    \begin{tikzcd}
      \{ \text{Hopf ideals of $H$} \}
      \arrow[r, shift left=1.2, "{\mathcal{F}}"]
      & \{ \text{normal left coideal subalgebras of $H$} \}
      \arrow[l, shift left=1.2, "{\mathcal{G}}"]
    \end{tikzcd}
  \end{equation*}
  given by $\mathcal{F}(J) = H^{\mathrm{co}(H/J)}$ and $\mathcal{G}(K) = H K^{+}$.
  Moreover, $\mathcal{F}$ and $\mathcal{G}$ form a monotone Galois connection: For a Hopf ideal $J$ and a normal left coideal subalgebra $K$ of $H$, we have
  \begin{equation}
    \label{eq:Galois-conn-CSA-Hopf-ideal}
    K \subseteq \mathcal{F}(J) \iff \mathcal{G}(K) \subseteq J.
  \end{equation}
\end{proposition}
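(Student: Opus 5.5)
Well-definedness is already settled: Lemma~\ref{lem:Hopf-ideal-to-CSA} shows that $\mathcal{F}(J)$ is a normal left coideal subalgebra, and Lemma~\ref{lem:CSA-to-Hopf-ideal} shows that $\mathcal{G}(K)$ is a Hopf ideal. Monotonicity of both maps follows once the equivalence \eqref{eq:Galois-conn-CSA-Hopf-ideal} is proved, by standard Galois-connection formalism. For instance, if $J_1\subseteq J_2$, then take $K=\mathcal{F}(J_1)$: since $\mathcal{G}(K)\subseteq J_1\subseteq J_2$, we get $K\subseteq\mathcal{F}(J_2)$. Monotonicity of $\mathcal{G}$ can also be seen directly, since $K_1\subseteq K_2$ gives $K_1^+\subseteq K_2^+$. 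So the real content is the equivalence, and I would reduce both sides of it to one condition: $K^+\subseteq J$, i.e.\ $\pi\circ i_K=\overline{u}\circ\varepsilon_H\circ i_K$, where $\pi:H\to H/J$ is the quotient map.

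\emph{Step 1: $\mathcal{G}(K)\subseteq J\iff K^+\subseteq J$.} By Lemma~\ref{lem:CSA-to-Hopf-ideal}, $HK^+$ is the ideal generated by $K^+$. Since $J$ is an ideal, $HK^+\subseteq J$ holds iff $K^+\subseteq J$; the forward direction uses $K^+=u_H(\unit)K^+\subseteq HK^+$. Next, because $K=K^+\oplus\unit$ (split by $u_H$ and $\varepsilon_H$), the condition $K^+\subseteq J$ is equivalent to $\pi i_K=\overline{u}\varepsilon_H i_K$. Indeed, both sides agree on $u_H$ and vanish on $K^+$ exactly when $\pi(K^+)=0$.

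\emph{Step 2: $K\subseteq H^{\mathrm{co}(H/J)}\iff \pi i_K=\overline{u}\varepsilon_H i_K$.} For the forward direction, apply $\varepsilon_H\otimes\id$ to the equalizer identity $(\id\otimes\pi)\Delta_H i_K=(i_K\otimes\overline{u})$. The counit axiom turns this into $\pi i_K=\overline{u}\varepsilon_H i_K$. For the converse, use that $K$ is a left coideal: there is $\delta:K\to H\otimes K$ with $\Delta_H i_K=(\id_H\otimes i_K)\delta$. Then
\[
(\id\otimes\pi)\Delta_H i_K=(\id\otimes\pi i_K)\delta=(\id\otimes\overline{u}\varepsilon_H i_K)\delta=(\id\otimes\overline{u})(\id\otimes\varepsilon_H)\Delta_H i_K=i_K\otimes\overline{u},
\]
so $i_K$ factors through the equalizer $H^{\mathrm{co}(H/J)}$.

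Combining the two steps gives \eqref{eq:Galois-conn-CSA-Hopf-ideal}. I expect the only delicate point to be the subobject bookkeeping in Step 2, namely that ``left coideal'' means $\Delta_H i_K$ factors through $H\otimes K$. This holds because $K$ is a subcomodule, and $H\otimes(-)$ is exact in a tensor category, so $H\otimes K\to H\otimes H$ is monic. Normality of $K$ and the Hopf ideal axioms for $J$ are never used in the equivalence itself; they enter only through well-definedness.
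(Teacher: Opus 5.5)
Your proposal is correct and follows essentially the same route as the paper's proof. Both arguments reduce each side of the equivalence to $K^+\subseteq J$, equivalently $\pi i_K=\overline{u}\,\varepsilon_H i_K$. One direction uses the counit axiom; the other uses the factorization of $\Delta_H i_K$ through $H\otimes K$ together with the splitting $i_K=(i_K-u_H\varepsilon_H i_K)+u_H\varepsilon_H i_K$. The differences are cosmetic: you deduce monotonicity from the Galois equivalence where the paper calls it obvious, and you route the forward direction through the intermediate condition instead of applying the counit argument directly to $K^+$.
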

\begin{proof}
  By Lemmas~\ref{lem:Hopf-ideal-to-CSA} and \ref{lem:CSA-to-Hopf-ideal}, the maps $\mathcal{F}$ and $\mathcal{G}$ are well-defined. It is obvious that the maps $\mathcal{F}$ and $\mathcal{G}$ preserve orders. It remains to show \eqref{eq:Galois-conn-CSA-Hopf-ideal}. Let $J$ be a Hopf ideal of $H$, and let $K$ be a left coideal subalgebra of $H$. We now assume that $K \subseteq \mathcal{F}(J)$ holds. Let $i_{K}^+ : K^+ \to H$ and $\pi : H \to H/J$ be the inclusion and the quotient morphism, respectively. Then we have
  \begin{equation*}
    \pi i_K^{+} = (\varepsilon_H \otimes \pi) \Delta_H i_K^{+}
    = (\varepsilon_H \otimes \pi u_H) i_K^{+}
    = \varepsilon_H i_K^{+} \otimes \pi u_H = 0,
  \end{equation*}
  where the assumption $K \subseteq \mathcal{F}(J)$ is used at the second equality. This implies that $J = \Ker(\pi)$ contains $K^{+}$. Since $J$ is an ideal of $H$, we have $\mathcal{G}(K) = H K^{+} \subseteq J$.

  To prove the converse, we assume $\mathcal{G}(K) \subseteq J$. Let $i_K : K \to H$ be the inclusion morphism. Since $K^{+} \subseteq J$ by the assumption, we have
  \begin{equation*}
    \pi i_K = \pi (i_K - u_H \varepsilon_H i_K) + \pi u_H \varepsilon_H i_K
    = \pi u_H \varepsilon_H i_K.
  \end{equation*}
  By definition, the left coaction $\delta_K : K \to H \otimes K$ of $H$ on $K$ is characterized by the equation $(\id_H \otimes i_K) \delta_K = \Delta_H i_K$. Thus we have
  \begin{gather*}
    (\id_H \otimes \pi)\Delta i_K
    = (\id_H \otimes \pi i_K) \delta_K
    = (\id_H \otimes \pi u_H \varepsilon_H i_K) \delta_K \\
    = (\id_H \otimes \pi u_H \varepsilon_H) \Delta_H i_K
    = (\id_H \otimes \pi u_H) i_K,
  \end{gather*}
  which implies $K \subseteq H^{\mathrm{co}(H/J)} = \mathcal{F}(J)$. The proof is done.
\end{proof}

\subsection{Proof of Theorem~\ref{thm:CSA-Hopf-ideal-bijection}}

Let $\mathcal{B}$ be a braided finite tensor category, and let $H$ be a Hopf algebra in $\mathcal{B}$. We take a nondegenerate braided finite tensor category $\widetilde{\mathcal{B}}$ such that $\mathcal{B} \subseteq \widetilde{\mathcal{B}}$ (for example, the Drinfeld center of $\mathcal{B}$). Then we have
\begin{equation*}
  \{ \text{Hopf ideals of $H \in \mathcal{B}$} \}
  = \{ \text{Hopf ideals of $H \in \widetilde{\mathcal{B}}$} \},
\end{equation*}
and the same holds for normal left coideal subalgebras of $H$. Thus, by replacing $\mathcal{B}$ with $\widetilde{\mathcal{B}}$, we may assume that $\mathcal{B}$ is nondegenerate. We note that, under this assumption, ${}^H_H\mathcal{YD}(\mathcal{B})$ is also a nondegenerate braided finite tensor category \cite[\S6]{shimizu2019non}.

\begin{proof}[Proof of Theorem~\ref{thm:CSA-Hopf-ideal-bijection}]
  For simplicity of notation, we set $\mathcal{C} = {}^H_H\mathcal{YD}(\mathcal{B})$.
  We consider the adjoint algebra $\mathbf{A}$ given in \S\ref{subsec:braided-Hopf-adj-alg}.
  Since it is constructed from a right adjoint of a central tensor functor, $\mathbf{A}$ is a simple commutative algebra in $\mathcal{C}$.
  As ${}^H_H\mathcal{YD}(\mathcal{B})$ is nondegenerate, Theorem~\ref{thm:nondegenerate-lattice-isomorphism} yields a bijection
  \begin{align*}
    \Phi : \Lalg{\mathbf{A}}
    & \to [\mathcal{C}_{\mathbf{A}}^{\mathrm{loc}},
      \mathcal{C}_{\mathbf{A}}]_{\otimes}, \\
    K & \mapsto \{ M \in \mathcal{C}_{\mathbf{A}} \mid M|_K \in \mathcal{C}_K^{\mathrm{loc}} \}.
  \end{align*}
  As we have remarked in \S\ref{subsec:braided-Hopf-adj-alg}, a subalgebra of $\mathbf{A}$ is the same thing as a normal left coideal subalgebra of $H$.
  We recall that there is an equivalence $\widetilde{R} : {}_H\mathcal{B} \to \mathcal{C}_{\mathbf{A}}$ of tensor categories. By Lemma~\ref{lem:local-condition-for-RVK} and the discussion in the proof of Lemma~\ref{lem:CSA-to-Hopf-ideal}, we have $\Phi(K) = \widetilde{R}({}_{H/HK^{+}}\mathcal{B})$ for all normal left coideal subalgebras $K$ of $H$. Thus, by composing the bijection $\Phi$, the bijection induced by the tensor equivalence $\widetilde{R}$ and the bijection given by Lemma~\ref{lem:Hopf-ideals-and-tensor-subcates}, we obtain a bijection
  \begin{equation*}
    \{ \text{normal left coideal subalgebras of $H$} \}
    \to \{ \text{Hopf ideals of $H$} \},
    \quad K \mapsto H K^{+}.
  \end{equation*}
  Namely, we have proved that the map $\mathcal{G}$ of Theorem~\ref{thm:CSA-Hopf-ideal-bijection} is bijective. Since $\mathcal{F}$ and $\mathcal{G}$ form a Galois connection (Proposition \ref{prop:Galois-conn-CSA-Hopf-ideal}), the map $\mathcal{F}$ is also bijective. The proof is done.
\end{proof}

\bibliographystyle{alpha}
\bibliography{references}

\end{document}